%------------------------------------------------------------------------------
% Beginning of journal.tex
%------------------------------------------------------------------------------

\documentclass{article}
\usepackage{mathrsfs, euscript,amstext,amsmath,amsthm,amssymb,amscd,a4}

\newtheorem{theorem}{Theorem}[section]
\newtheorem{lemma}[theorem]{Lemma}

\theoremstyle{definition}
\newtheorem{definition}[theorem]{Definition}

\newtheorem{proposition}[theorem]{Proposition}
\theoremstyle{remark}
\newtheorem{remark}[theorem]{Remark}

\numberwithin{equation}{section}

%    Absolute value notation

%    Blank box placeholder for figures (to avoid requiring any
%    particular graphics capabilities for printing this document).

\begin{document}

\title{A Cheeger-M\"{u}ller theorem for symmetric bilinear\\ torsions on manifolds with boundary
}

%    Information for first author
\author{Guangxiang Su\footnote{Chern Institute of Mathematics, Nankai University, Tianjin 300071, P. R. China. E-mail: guangxiangsu@nankai.edu.cn}}
%    Address of record for the research reported here

%    \thanks will become a 1st page footnote.

%    Information for second author

%    General info

\date{}

\maketitle

\begin{abstract}
In this paper, we extend Su-Zhang's Cheeger-M\"{u}ller type theorem for symmetric bilinear torsions to manifolds with boundary in the
case that the Riemannian metric and the non-degenerate symmetric bilinear form are of product structure near the boundary. Our result also extends Br\"{u}ning-Ma's Cheeger-M\"{u}ller type theorem for Ray-Singer metric on manifolds with boundary to symmetric bilinear torsions in product case.
We also compare it with the Ray-Singer analytic torsion on manifolds with boundary.
\end{abstract}

\section{Introduction}
Let $F$ be a unitary flat vector bundle on a closed Riemannian manifold $X$. In \cite{RS}, Ray and Singer defined an
analytic torsion associated to $(X,F)$ and proved that it does not depend on the Riemannian metric on $X$. Moreover, they
conjectured that this analytic torsion coincides with the classical Reidemeister torsion defined using a triangulation on
$X$ (cf. \cite{Mi}). This conjecture was later proved in the celebrated papers of Cheeger \cite{C} and M\"{u}ller
\cite{Mu1}. M\"{u}ller generalized this result in \cite{Mu2} to the case where $F$ is a unimodular flat vector bundle on
$X$. In \cite{BZ1}, inspired by the considerations of Quillen \cite{Q}, Bismut and Zhang reformulated the above
Cheeger-M\"{u}ller theorem as an equality between the Reidemeister and Ray-Singer metrics defined on the determinant
of cohomology, and proved an extension of it to the case of general flat vector bundles over $X$. The method used in
\cite{BZ1} is different from those of Cheeger and M\"{u}ller in that it makes use of a deformation by Morse functions
introduced by Witten \cite{W} on the de Rham complex.

On the other hand, if there is a non-degenerate
symmetric bilinear form $b^{F}$ on $F$, Burghelea and Haller \cite{BH1}, defined a complex valued analytic torsion in
the spirit of Ray and Singer \cite{RS}. In \cite{BH1}, they also made an explicit conjecture between the complex valued
analytic torsion and the Turaev torsion (cf. \cite{FT}, \cite{T}). In \cite{SZ}, Su and Zhang used the approach developed by Bismut-Zhang
\cite{BZ1,BZ2}, making use of the Witten deformation, proved the conjecture in full generality. In \cite{BH2},
Burghelea and Haller proved their conjecture, up to sign, in the case where $X$ is of odd dimensional.

Now consider $X$ with the boundary $Y\neq \emptyset$. In \cite{Luc}, \cite{V} and \cite{H}, the authors studied the
Ray-Singer analytic torsion under the assumption that the Hermitian metric $h^{F}$ on $F$ is flat and the
Riemannian metric
$g^{TX}$ has product structure near the boundary. Dai and Fang \cite{DF} studied the case that the Hermitian metric
$h^{F}$ is flat but without assuming a product structure for $g^{TX}$ near $Y$. In \cite{BM1}, Br\"{u}ning and Ma
studied the general case without assumption on $h^{F}$ and $g^{TX}$ associated to the absolute boundary condition on
$X$. In \cite{BM}, Br\"{u}ning and Ma studied the results for the relative boundary condition on $X$, proved a Cheeger-M\"{u}ller type theorem for the Ray-Singer metric on the manifolds with boundary and applied it
derived the gluing formula for the analytic torsion in general setting.

For the complex-valued analytic torsion, Molina \cite{M} extended the Burghelea-Haller analytic torsion to compact manifolds with boundary under the relative
boundary condition and the absolute boundary condition.

In this paper, we extend the main result in \cite{SZ} to manifolds with boundary for the couple $(g^{TX},b^{F})$
are of
product structure near the boundary. We use the method in \cite{BM}. We first double $X$ along the boundary $Y$ and get
a closed Riemannian manifold $\widetilde{X}=X\cup_{Y}X$ with Riemannian metric $g^{T\widetilde{X}}$, then there is a $\mathbb{Z}_{2}$-action on $\widetilde{X}$ induced
by the natural involution $\phi$ on it. Also the flat bundle $F$ extends to a flat bundle $\widetilde{F}$ on $\widetilde{X}$ which has a non-degenerate symmetric bilinear form $b^{\widetilde{F}}$. The metric $g^{T\widetilde{X}}$ and the form $b^{\widetilde{F}}$ are all invariant under the action of $\phi$. So we
first extend the main result in \cite{SZ}
to $\mathbb{Z}_{2}$-equivariant case. Then we compare this $\mathbb{Z}_{2}$-equivariant symmetric
bilinear torsions on $\widetilde{X}$ to the symmetric bilinear torsions on $X$ with relative boundary condition and absolute
boundary condition. Combining these and the result in \cite{SZ}, we get the main theorem in this paper. In a next paper, I will apply the techniques in this paper to deal with the Cappell-Miller analytic torsion \cite{CM}.

The rest of the paper is organized as follows. In Section 2, we review the symmetric bilinear analytic torsion on
manifolds with boundary and the anomaly formula from \cite{M}. In Section 3, we define the symmetric bilinear Milnor torsion on manifolds with boundary.
 In Section 4, we study the doubling formulas for symmetric bilinear torsions and extend the result in \cite{SZ} to $\mathbb{Z}_{2}$-
equivariant case. In Section 5, we
prove the Cheeger-M\"{u}ller type theorem in current case. In Section 6, we compare the symmetric bilinear analytic
torsion with the Ray-Singer analytic torsion on manifolds with boundary.

\section{Symmetric bilinear analytic torsion on manifolds with boundary}

In this section, we will review the definition of
the symmetric bilinear analytic torsion on manifolds with boundary and the anomaly
formula of it.

Let $X$ be a compact oriented Riemannian manifold
with boundary $Y$. Let $F$ be a flat complex vector bundle over $X$. We assume that there is a non-degenerate
symmetric bilinear form $b^{F}$ on $F$. If the Riemannian metric $g^{TX}$ and the form $b^{F}$ are of product
structure near the boundary $Y$.
By the $(g^{TX},b^{F})$, we can define a non-degenerate symmetric bilinear form on $\Omega^*(X,F)$, i.e.,
for $\omega_{1},\omega_{2}\in\Omega^*(X,F)$, then
\begin{align}\label{0.1}
\langle \omega_{1},\omega_{2}\rangle_{b}=\int_{X}{\rm Tr}(\omega_{1}\wedge *_{b^{F}}\omega_{2}),
\end{align}
where $*_{b^{F}}=*\otimes b^{F}:\Omega^{*}(X,F)\to \Omega^*(X,F')$ and $b^{F}:F\to F'$ is induced by the form $b^{F}$, $*$ is the Hodge star operator (cf. \cite{Z}).
Let $d^{F_{\#}}_{b}$ be the formal adjoint of $d^{F}$ with respect to the form in (\ref{0.1}). Then we have the Laplacian
operator
\begin{align}
\Delta_{b}^{F}=(d^{F}+d_{b}^{F_{\#}})^{2}=d^{F}d_{b}^{F_{\#}}+d_{b}^{F_{\#}}d^{F}.
\end{align}
We can impose the relative boundary condition or the absolute boundary condition on $Y$ for the operator $\Delta_{b}$
(cf. \cite{M}). That is, for $\omega\in\Omega^*(X,F)$,
$$i^*\omega=0,\ \ i^*d_{b}^{F_{\#}}\omega=0$$
or
$$i^* *_{b^{F}}\omega=0,\ \ i^*d^{F'_{\#}}_{b}*_{b^{F}}\omega=0.$$
We denote by $\Omega^*(X,F)_{r}$(resp. $\Omega^*(X,F)_{a}$) the complex $(\Omega^*(X,F),d^{F})$ with the relative (resp.
absolute) boundary condition. Then we have
$$H^{\bullet}(\Omega^*(X,F)_{r},d^{F})\cong H^{\bullet}(X,Y,F),\ \
H^{\bullet}(\Omega^*(X,F)_{a},d^{F})\cong H^{\bullet}(X,F).$$

For any $k\geq 0$, let $\Omega^*_{[0,k]}(X,F)_{r}$ (resp. $\Omega^*_{[0,k]}(X,F)_{a}$) denote the generalized eigenspace
of $\Delta_{b}$ on $\Omega^*(X,F)_{r}$ (resp. $\Omega^*(X,F)_{a}$) with respect to the generalized eigenvalues
with the absolute value in $[0,k]$. Let $b_{{\rm det}H^{\bullet}(\Omega_{[0,k]}^*(X,F)_{r})}$ (resp.
$b_{{\rm det}H^{\bullet}(\Omega_{[0,k]}^*(X,F)_{a})}$) be the induced symmetric bilinear form on
${\rm det}H^{\bullet}(\Omega_{[0,k]}^*(X,F)_{r})$ (resp. ${\rm det}H^{\bullet}(\Omega_{[0,k]}^*(X,F)_{a})$) via the
canonical isomorphisms
$${\rm det}H^{\bullet}(\Omega_{[0,k]}^*(X,F)_{r})\cong {\rm det}(\Omega_{[0,k]}^*(X,F)_{r}),$$
$${\rm det}H^{\bullet}(\Omega_{[0,k]}^*(X,F)_{a})\cong {\rm det}(\Omega_{[0,k]}^*(X,F)_{a}).$$
For the subcomplex $(\Omega^*_{[k,+\infty)}(X,F)_{r},d^{F})$ (resp. $(\Omega^*_{[k,+\infty)}(X,F)_{a},d^{F})$), we have
the regulized zeta-determinant (cf. \cite{Gb}, \cite{Se67}, \cite{Se69})
$${\rm det}\left(\Delta_{b}^{F}|_{\Omega^i_{[k,+\infty)}(X,F)_{r}}\right)\ {\rm and}\
{\rm det}\left(\Delta_{b}^{F}|_{\Omega^i_{[k,+\infty)}(X,F)_{a}}\right).$$
Then we can define the symmetric bilinear torsion $b_{{\rm det}H^{\bullet}(X,Y,F)}$
(resp. $b_{{\rm det}H^{\bullet}(X,F)}$) on ${\rm det}H^{\bullet}(X,Y,F)$ (resp. ${\rm det}H^{\bullet}(X,F)$)
by
\begin{align}\label{2.3}
b_{{\rm det}H^{\bullet}(X,Y,F)}=b_{{\rm det}H^{\bullet}(\Omega_{[0,k]}^*(X,F)_{r})}\cdot
\prod_{i=0}^{{\rm dim}X}\left({\rm det}\left(\Delta_{b}^{F}|_{\Omega^i_{[k,+\infty)}(X,F)_{r}}\right)\right)^{(-1)^ii}
\end{align}
and
\begin{align}\label{2.4}
b_{{\rm det}H^{\bullet}(X,F)}=b_{{\rm det}H^{\bullet}(\Omega_{[0,k]}^*(X,F)_{a})}\cdot
\prod_{i=0}^{{\rm dim}X}\left({\rm det}\left(\Delta_{b}^{F}|_{\Omega^i_{[k,+\infty)}(X,F)_{a}}\right)\right)^{(-1)^ii},
\end{align}
which are independent of $k\geq 0$.

Let $\theta(F,b^{F})\in\Omega^{1}(M)$ be the Kamber-Tondeur form defined by (cf. \cite[(4)]{BH1})
$$\theta(F,b^{F})={\rm Tr}\left[(b^{F})^{-1}\nabla^{F}b^{F}\right].$$

Now we state the anomaly formulas. If $(g_{0}^{TX},b_{0}^{F})$ and $(g_{1}^{TX},b_{1}^{F})$ are two couple of metric and
symmetric bilinear form which are of product structure near the boundary (cf. (\ref{1.1}), (\ref{1.2})) and in a same homotopy class . Then by
\cite[Theorem 3]{M}, we have
\begin{multline}\label{1.3}
\log\left({{b_{0,{\rm det}H^{\bullet}(X,Y,F)}}\over{b_{1,{\rm det}H^{\bullet}(X,Y,F)}}}\right)=\int_{X}\log
\left({{b_{0,{\rm det}F}}\over{b_{1,{\rm det}F}}}\right)e\left(TX,\nabla_{1}^{TX}\right)\\
-\int_{X}\theta(F,b_{0}^{F})(\nabla f)^*\left(\psi(TX,\nabla_{0}^{TX})-\psi(TX,\nabla_{1}^{TX})\right)\\
-{1\over 2}\int_{Y}\log \left({{b_{0,{\rm det}F}}\over{b_{1,{\rm det}F}}}\right)e\left(TY,\nabla_{1}^{TY}\right)
-{1\over 2}\int_{Y}\widetilde{e}(TY,\nabla^{TY}_{1},\nabla^{TY}_{0})\theta(F,b^{F}_{0}),
\end{multline}
and
\begin{multline}\label{1.4}
\log\left({{b_{0,{\rm det}H^{\bullet}(X,F)}}\over{b_{1,{\rm det}H^{\bullet}(X,F)}}}\right)=\int_{X}\log
\left({{b_{0,{\rm det}F}}\over{b_{1,{\rm det}F}}}\right)e\left(TX,\nabla_{1}^{TX}\right)\\
-\int_{X}\theta(F,b_{0}^{F})(\nabla f)^*\left(\psi(TX,\nabla_{0}^{TX})-\psi(TX,\nabla_{1}^{TX})\right)\\
+{1\over 2}\int_{Y}\log \left({{b_{0,{\rm det}F}}\over{b_{1,{\rm det}F}}}\right)e\left(TY,\nabla_{1}^{TY}\right)
+{1\over 2}\int_{Y}\widetilde{e}(TY,\nabla^{TY}_{1},\nabla^{TY}_{0})\theta(F,b^{F}_{0}),
\end{multline}
where $e(TX,\nabla^{TX})$ is the Euler form, $\widetilde{e}(TY,\nabla^{TY}_{1},\nabla^{TY}_{0})$ is the Chern-Simons class (cf. \cite[Chapter 4]{BZ1}) and $\psi(TX,\nabla^{TX})$ is the Mathai-Quillen current on $TX$ constructed in \cite[Chapter 3]{BZ1}.

\begin{remark}
Throughout this paper, for complex numbers $a,b\in\mathbb{C}$,  $\log a= b$ means that
$a=e^{b}$.
\end{remark}

\begin{remark}
In \cite{M}, $(g^{TX},b^{F})$ does not need the product structure near the boundary.
\end{remark}

\begin{remark}
If ${\rm dim}X=m$ is odd, then the first two terms in the right hand side of (\ref{1.3}) and (\ref{1.4}) vanish.
\end{remark}

\section{Symmetric bilinear Milnor torsion on manifolds with boundary}
Let $f$ be a Morse function on $X$ and $f|_{Y}$ be the restriction of $f$ on $Y$. Set
\begin{align}
B=\{x\in X; df(x)=0\},\ \ B_{\partial}=\{x\in Y; d(f|_{Y})(x)=0\}.
\end{align}
For $x\in B$, let ${\rm ind}(x)$ be the index of $f$ at $x$, i.e., the number of negative eigenvalues of the quadratic
form $d^{2}f(x)$ on $T_{x}X$.

Consider the differential equation
\begin{align}
{{\partial y}\over{\partial t}}=-\nabla f(y),
\end{align}
and denote by $(\psi_{t})$ the associated flow.

For $x\in B$, the unstable cell $W^{u}(x)$ and the stable cell $W^{s}(x)$ of $x$ are defined by
\begin{align}
W^{u}(x)=\{y\in X;\lim_{t\to -\infty}\psi_{t}(y)=x\},
\end{align}
$$W^{s}(x)=\{y\in X;\lim_{t\to +\infty}\psi_{t}(y)=x\}.$$
The Smale transversality condition means that
\begin{align}\label{6}
{\rm for}\ x,y\in B, \ x\neq y,\ W^{u}(x)\ {\rm and}\ W^{s}(y)\ {\rm intersect}\ {\rm transversally}.
\end{align}

Let $\mathfrak{n}$ be the normal bundle to $Y$ in $X$.

\begin{lemma}(\cite[Lemma 1.5]{BM})\label{t1.1}
There exists a Morse function $f$ on $X$ such that $f|_{Y}$ is a Morse function on $Y$, $B_{\partial}=B\cap Y$,
and $d^{2}f(x)|_{\mathfrak{n}}>0$ for $x\in B_{\partial}$. Moreover, there exists a gradient vector field $\nabla f$ of
$f$, verifying the Smale transversality condition (\ref{6}) and $\nabla f|_{Y}\in TY$.
\end{lemma}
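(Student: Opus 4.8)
The plan is to build $f$ in three stages: first near $Y$, where the product structure forces a very rigid form; then extend it into the interior of $X$; and finally perturb the gradient field to achieve Smale transversality without destroying the behaviour at $Y$. For the first stage I would invoke the product structure of $g^{TX}$ near $Y$, so that a collar neighbourhood of $Y$ is isometric to $Y\times[0,\varepsilon)$ with coordinate $u$ on the $[0,\varepsilon)$ factor and $\mathfrak n$ spanned by $\partial_u$. Choose a Morse function $g$ on the closed manifold $Y$ (these exist and are generic), and on the collar set $f(y,u)=g(y)+\tfrac12 u^{2}$ (or more honestly $f(y,u)=g(y)+h(u)$ with $h(0)=0$, $h'(u)>0$ for $u>0$, $h''(0)>0$, and $h$ eventually constant so the formula can be cut off). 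Then on the collar $df=dg+h'(u)\,du$, which vanishes exactly on $B_\partial\times\{0\}$ where $B_\partial=\operatorname{Crit}(g)$, so $B\cap Y=B_\partial$; the Hessian of $f$ at such a point splits as $d^{2}g(x)\oplus h''(0)\,du^{2}$, so $\operatorname{ind}_X(x)=\operatorname{ind}_Y(x)$ and $d^{2}f(x)|_{\mathfrak n}=h''(0)>0$, exactly as required. Moreover $\nabla f$ in the collar is $\nabla^Y g + h'(u)\partial_u$, which is tangent to each slice $\{u=\text{const}\}$ only in its $Y$-component; crucially, restricted to $Y=\{u=0\}$ it equals $\nabla^Y g\in TY$, giving $\nabla f|_Y\in TY$.

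The second stage is to extend $f$ from the collar (where it is now fixed, say on $Y\times[0,\varepsilon/2)$) to all of $X$ as a Morse function with no new critical points on $\partial$ of the collar. This is a standard extension argument: $f$ restricted to the collar slice $Y\times\{\varepsilon/4\}$ has no critical points and is bounded below by $\min g$; extend it to a smooth function on $X$ agreeing with the collar formula near $Y$ and with $\partial f/\partial u>0$ throughout the collar, then perturb in the interior $X\setminus(Y\times[0,\varepsilon/4))$ — keeping it fixed near the collar boundary — to a Morse function (generic small perturbations of any smooth function are Morse, by Sard/Thom). No critical points are created in the collar since $df\neq0$ there away from $Y$, and none are destroyed at $Y$ since we never touched the function there. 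This yields a Morse function $f$ on $X$ with $f|_Y$ Morse, $B\cap Y=B_\partial$, $d^2f|_{\mathfrak n}>0$ on $B_\partial$, and with a gradient field (for any metric agreeing with $g^{TX}$ near $Y$) that is tangent to $Y$ along $Y$.

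The third and genuinely delicate stage is to arrange the Smale transversality condition (\ref{6}) while preserving $\nabla f|_Y\in TY$ and the behaviour on the collar. I would follow Smale's density argument in the relative form used in \cite{BM}: the set of gradient-like vector fields for $f$ that satisfy the Smale condition is $C^1$-dense, and one performs the perturbations \emph{away from the critical points and away from $Y$}, in the interior $X\setminus(Y\times[0,\varepsilon/4))$, so that the modified field still equals $\nabla^Y g + h'(u)\partial_u$ on the collar, hence remains tangent to $Y$ along $Y$. One subtlety is that transversality must be checked both for interior stable/unstable manifolds and for those of the boundary critical points; but since $\nabla f$ is tangent to $Y$ along $Y$, the hypersurface $Y$ is flow-invariant, the flow on $Y$ is the gradient flow of $g$, and by the genericity of $g$ one may also assume $g$ itself is Smale on $Y$ — the interior transversality is then the only thing the interior perturbation needs to fix, and the two conditions are compatible because $Y$ being invariant means unstable cells of interior critical points meet $Y$ transversally for free in the normal direction. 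I expect the main obstacle to be exactly this bookkeeping: verifying that a single perturbation can be chosen to satisfy the Smale condition for \emph{all} pairs $(x,y)$ simultaneously (a countable intersection of open dense sets, handled by Baire) while respecting both the collar constraint and the tangency $\nabla f|_Y\in TY$. Since this is precisely \cite[Lemma 1.5]{BM}, I would ultimately cite that lemma for the transversality step rather than reproduce Smale's argument in detail.
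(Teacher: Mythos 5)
The paper provides no proof of its own: Lemma \ref{t1.1} is quoted verbatim from \cite[Lemma 1.5]{BM} and used as a black box, and you yourself ultimately cite that lemma for the delicate simultaneous-transversality step, so the two treatments effectively agree. Your constructive sketch of the easier half (collar formula $f(y,u)=g(y)+h(u)$ with $h'(0)=0$, $h''(0)>0$; interior extension; Morse genericity by a perturbation supported away from the collar) is sound and is essentially the content of the Br\"uning--Ma construction.

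Two small remarks worth recording. First, the later use in the paper (proof of Theorem \ref{t5.1}) requires $f$ to arise as the restriction of a $\mathbb{Z}_{2}$-equivariant Morse function on the double $\widetilde{X}=X\cup_{Y}X$, which is how \cite{BM} actually produce it; your collar expression glues to a smooth even function on $\widetilde X$ only when the odd $u$-derivatives of $h$ vanish at $0$, so one should take, e.g., $h(u)=u^{2}$ near $u=0$ rather than an arbitrary $h$ with $h''(0)>0$. Second, a minor inaccuracy in your transversality bookkeeping: since $\nabla f|_{Y}\in TY$ makes $Y$ flow-invariant, the unstable manifold $W^{u}(x)$ of an \emph{interior} critical point $x$ is disjoint from $Y$ (a trajectory touching $Y$ would lie entirely in $Y$, forcing $x\in Y$), so there is no ``transversality in the normal direction'' to arrange there. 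The genuinely new pairs are $x$ interior and $y\in B_{\partial}$, where $W^{s}(y)$ does carry a normal component, and those are exactly what the interior perturbation handles; the boundary--boundary pairs reduce, as you say, to the Smale condition for $f|_{Y}$ on $Y$, since $T_{p}W^{s}(y)$ meets $T_{p}Y$ precisely in $T_{p}W^{s}_{Y}(y)$ with a one-dimensional normal complement.
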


From now on, we choose a Morse function $f$ on $X$ fulfilling the condition of Lemma \ref{t1.1}.
For $x\in B_{\partial}$, set
\begin{align}
W^{u}_{Y}(x)=W^{u}(x)\cap Y,\ \ W^{s}_{Y}(x)=W^{s}(x)\cap Y.
\end{align}
As $d^{2}f(x)|_{\mathfrak{n}}>0$, for $x\in B_{\partial}$, and $\nabla f|_{\partial X}\in T\partial X$, we know that
$\nabla f|_{\partial X}$ verifies also the Smale transversality condition (\ref{6}).

For $x\in B$, we denote by $[W^{u}(x)]$ the complex line generated by $W^{u}(x)$, and by $[W^{u}(x)]^*$ the dual line.
Set
\begin{align}
C_{j}(W^{u},F^*)=\bigoplus_{x\in B,{\rm ind}(x)=j}[W^{u}(x)]\otimes F_{x}^*,
\end{align}
$$C_{j}(W^{u}_{Y},F^*)=\bigoplus_{x\in B\cap Y, {\rm ind}(x)=j}[W^{u}(x)]\otimes F_{x}^*.$$
There is a map $\partial:C_{j}(W^{u},F^*)\to C_{j-1}(W^{u},F^*)$ with $\partial^{2}=0$, which defines the Thom-Smale
complex $(C_{\bullet}(W^{u},F^*),\partial)$ (cf. \cite{Sm}, \cite[(1.30)]{BZ1}), which calculates the
homology $H_{\bullet}(X,F^*)$. As $d^{2}f(x)|_{\mathfrak{n}}>0$, for $x\in B_{\partial}$, and $\nabla f|_{\partial X}
\in T\partial X$, we know that
\begin{align}
\partial C_{j}(W^{u}_{Y},F^*)\subset C_{j-1}(W^{u}_{Y},F^*),
\end{align}
thus the Thom-Smale complex $(C_{\bullet}(W^{u}_{Y},F^*),\partial)$ is a sub-complex of $(C_{\bullet}(W^{u},F^*),\partial)$.
Let $(C^{\bullet}(W^{u},F),\widetilde{\partial})$ and $(C^{\bullet}(W^{u}_{Y},F),\widetilde{\partial})$ be the dual complex
of $(C_{\bullet}(W^{u},F^*),\partial)$ and $(C_{\bullet}(W^{u}_{Y},F^*),\partial)$, respectively. Then the complex
$(C^{\bullet}(W^{u},F),\widetilde{\partial})$ and the complex $(C^{\bullet}(W^{u}_{Y},F),\widetilde{\partial})$ calculate the cohomology
$H^{\bullet}(X,F)$ and $H^{\bullet}(Y,F)$.

Let $\jmath$ be the natural morphism of complexes
\begin{align}
\jmath: C^{\bullet}(W^{u},F)\to C^{\bullet}(W^{u}_{Y},F).
\end{align}
Define
\begin{align}
C^{\bullet}(W^{u}/W^{u}_{Y},F):={\rm Ker}\jmath,
\end{align}
and denote by $H^{\bullet}(C^{\bullet}(W^{u}/W^{u}_{Y},F),\widetilde{\partial})$ the cohomology of
$(C^{\bullet}(W^{u}/W^{u}_{Y},F),\widetilde{\partial})$. Then we have a canonical isomorphism
\begin{align}
H^{\bullet}(C^{\bullet}(W^{u}/W^{u}_{Y},F),\widetilde{\partial})\cong H^{\bullet}(X,Y,F).
\end{align}

We now introduce a symmetric bilinear form on each $[W^{u}(x)]^*\otimes F_{x}$ such that for any
$f,f'\in F_{x}$,
\begin{align}
\langle W^{u}(x)^*\otimes f,W^{u}(x)^*\otimes f'\rangle=\langle f,f'\rangle_{b^{F_{x}}}.
\end{align}
Let $b^{C^{\bullet}(W^{u}/W^{u}_{Y},F)}$ be the symmetric bilinear form on $C^{\bullet}(W^{u}/W^{u}_{Y},F)$ induced by the
form on $C^{\bullet}(W^{u},F)$. Finally, let $b_{{\rm det}C^{\bullet}(W^{u}/W^{u}_{Y},F)}$ be the form on the complex
line
\begin{align}
{\rm det}C^{\bullet}(W^{u}/W^{u}_{Y},F)=\bigotimes_{j=0}^{{\rm dim}X}
\left({\rm det}C^{j}(W^{u}/W^{u}_{Y},F)\right)^{(-1)^{j}}.
\end{align}

\begin{definition}
Let $b^{M,\nabla f}_{{\rm det}H^{\bullet}(X,Y,F)}$ be the symmetric bilinear form on ${\rm det}H^{\bullet}(X,Y,F)$
corresponding to $b_{{\rm det}C^{\bullet}(W^{u}/W^{u}_{Y},F)}$ via the canonical isomorphism
$${\rm det}H^{\bullet}(X,Y,F)\cong {\rm det}C^{\bullet}(W^{u}/W^{u}_{Y},F).$$
The form $b^{M,\nabla f}_{{\rm det}H^{\bullet}(X,Y,F)}$ will be called the symmetric bilinear Milnor torsion.
\end{definition}

\begin{remark}
We can also define the symmetric bilinear Milnor torsion $b^{M,\nabla f}_{{\rm det}H^{\bullet}(X,F)}$ on ${\rm det}H^{\bullet}(X,F)$.
\end{remark}

\section{Doubling formulas for symmetric bilinear torsions}

In this section, we will study the doubling formulas for symmetric bilinear torsions and extend the main theorem in \cite{SZ} to the $\mathbb{Z}_{2}$-equivariant case.

\subsection{Doubling formula for symmetric bilinear Ray-Singer torsion}
We assume that the Riemannian metric $g^{TX}$ is product near
the boundary $Y$, i.e., there exists a neighborhood $U_{\varepsilon}$ of $Y$ and an identification
$Y\times [0,\varepsilon)\to U_{\varepsilon}$, such that for $(y,x_{m})\in Y\times  [0,\varepsilon)$,
\begin{align}\label{1.1}
g^{TX}|_{(y,x_{m})}=dx_{m}^{2}\oplus g^{TY}(y).
\end{align}
This condition insures that the manifold $\widetilde{X}:=X\cup_{Y}X$ has a canonical Riemannian metric
$g^{T\widetilde{X}}=g^{TX}\cup_{Y}g^{TX}$. The natural involution on $\widetilde{X}$ will be denoted by $\phi$, it
generates a $\mathbb{Z}_{2}$-action on $\widetilde{X}$. Let $j_{k}:X\to \widetilde{X}$ be the natural inclusion into the $k$-th factor, $k=1,2$, which identifies $X$ with $j_{k}(X)$.

Let $F$ be a complex flat vector bundle over $X$ with flat connection $\nabla^{F}$. Suppose that there exists a
non-degenerate symmetric bilinear form $b^{F}$ on $F$. we trivialize $F$ on $U_{\varepsilon}$ using the parallel
transport along the curve $u\in [0,1)\to (y,u\varepsilon)$ defined by the connection $\nabla^{F}$, then we have
$F|_{U_{\varepsilon}}=\pi^*_{\varepsilon}F|_{Y}$, where $\pi_{\varepsilon}:Y\times [0,\varepsilon)\to Y$ is the obvious
projection on the first factor. We also assume that
\begin{align}\label{1.2}
b^{F}=\pi_{\varepsilon}^*b^{F}|_{Y}\ \ {\rm on}\ U_{\varepsilon}.
\end{align}

Let $\widetilde{F}=F\cup_{Y}F$ be the flat complex vector bundle with non-degenerate symmetric bilinear form $b^{\widetilde{F}}$ on $\widetilde{X}$ induced by $(F,b^{F})$.

For the couple $(\widetilde{X},\widetilde{F})$, with the Riemannian metric $g^{T\widetilde{X}}$ and the non-degenerate symmetric bilinear form $b^{\widetilde{F}}$, we denote by $D_{b}$ the operator  defined as in \cite[(2.20)]{SZ}.
Let $\langle\ ,\rangle_{b}$ be the symmetric bilinear form on $\Omega^*(\widetilde{X},\widetilde{F})$ defined as in (\ref{0.1}).

For any $a\geq 0$, let $\Omega^*_{[0,a]}(\widetilde{X},\widetilde{F})$ be the generalized eigenspace corresponding to the generalized eigenvalue of $D_{b}^{2}$ with absolute value in $[0,a]$. Let $b_{\Omega^*_{[0,a]}(\widetilde{X},\widetilde{F})}$ be the induced symmetric bilinear form on $\Omega^*_{[0,a]}(\widetilde{X},\widetilde{F})$. As it in \cite{SZ}, we know that it is non-degenerate. Let $\Omega^*_{[0,a]}(\widetilde{X},\widetilde{F})^{\pm}$ be the $\pm 1$-eigenspace of $\phi$ on $\Omega^*_{[0,a]}(\widetilde{X},\widetilde{F})$. Let $b_{\Omega^*_{[0,a]}(\widetilde{X},\widetilde{F})^{\pm}}$ be the induced symmetric bilinear forms on $\Omega^*_{[0,a]}(\widetilde{X},\widetilde{F})^{\pm}$ from $b_{\Omega^*_{[0,a]}(\widetilde{X},\widetilde{F})}$. It is easily to see that $\Omega^*_{[0,a]}(\widetilde{X},\widetilde{F})^{+}$ and $\Omega^*_{[0,a]}(\widetilde{X},\widetilde{F})^{-}$ are orthogonal with respect to $b_{\Omega^*_{[0,a]}(\widetilde{X},\widetilde{F})}$, then $b_{\Omega^*_{[0,a]}(\widetilde{X},\widetilde{F})^{\pm}}$ are non-degenerate.

Let $\Omega^*_{(a,+\infty)}(\widetilde{X},\widetilde{F})$ be the $\langle\ ,\rangle_{b}$-orthogonal complement of $\Omega^*_{[0,a]}(\widetilde{X},\widetilde{F})$. For any $0\leq i\leq m$, let $D^{2}_{b,i}$ be the restriction of $D_{b}^{2}$ on $\Omega^{i}(\widetilde{X},\widetilde{F})$. Then for $g\in\mathbb{Z}_{2}$, we can define the regularized
equivariant zeta determinant
\begin{align}
{\rm det}_{g}'\left(D^{2}_{b,(a,+\infty),i}\right)=\exp\left(-\left.{{\partial}\over{\partial s}}\right|_{s=0}{\rm Tr}\left[g\left(D^{2}_{b,i}|_{\Omega^*_{(a,+\infty)}(\widetilde{X},\widetilde{F})}\right)^{-s}\right]\right).
\end{align}

Let $b_{{\rm det}(H^{\bullet}(\widetilde{X},\widetilde{F})^{\pm})}$ be the symmetric bilinear form on ${\rm det}(H^{\bullet}(\widetilde{X},\widetilde{F})^{\pm})$ induced by the finite dimensional subcomplex of the de Rham complex according the $\pm 1$-eigenvalue. Then for $\mu=(\mu_{1},\mu_{2})\in{\rm det}(H^{\bullet}(\widetilde{X},\widetilde{F}),\mathbb{Z}_{2})$, $g\in\mathbb{Z}_{2}$, the equivariant symmetric bilinear Ray-Singer torsion is defined by
\begin{multline}
b^{\rm RS}_{{\rm det}(H^{\bullet}(\widetilde{X},\widetilde{F}),\mathbb{Z}_{2})}(\mu)(g)
=b_{{\rm det}(H^{\bullet}(\Omega^*_{[0,a]}(\widetilde{X},\widetilde{F}))^{+})}(\mu_{1})\\
\cdot b^{\chi(g)}_{{\rm det}(H^{\bullet}(\Omega^*_{[0,a]}(\widetilde{X},\widetilde{F}))^{-})}(\mu_{2})\cdot
\prod_{i=1}^{{\rm dim}X}\left({\rm det}_{g}'\left(\widetilde{D}^{2}_{b,(a,+\infty),i}\right)\right)^{(-1)^ii},
\end{multline}
which is independent of the choice of $a\geq 0$ (cf. \cite[Theorem 1.4]{BZ2} and \cite[Proposition 4.7]{BH1}).

Denote by $(b_{(a,+\infty),(\widetilde{X},g^{T\widetilde{X}},b^{\widetilde{F}})})(g)$ the product
$$\prod_{i=1}^{{\rm dim}X}\left({\rm det}_{g}'\left(\widetilde{D}^{2}_{b,(a,+\infty),i}\right)\right)^{(-1)^ii}$$
and similarly by $b_{(a,+\infty),(X,Y,g^{TX},b^{F})}$, $b_{((a,+\infty)),(X,g^{TX},b^{F})}$ the corresponding parts in
(\ref{2.3}) and (\ref{2.4}). Let $\mathbb{C}^{+}$, $\mathbb{C}^{-}$ be the trivial and nontrivial one dimension complex $\mathbb{Z}_{2}$-representation, respectively. Then by the same argument in \cite[Proposition 2.1]{BM}, we have

\begin{proposition}\label{t3.3}(Doubling formula for symmetric bilinear analytic torsion)
For $\lambda\geq 0$, let $\Omega^*_{\{\lambda\}}(\widetilde{X},\widetilde{F})$ be the generalized eigenspace of $D_{b}^{2}$ corresponding to the generalized eigenvalue with absolute value equals $\lambda$,  we have a $\mathbb{Z}_{2}$-equivariant map
\begin{align}\label{3.10}
\widetilde{\phi}:\Omega_{\{\lambda\}}^{p}(\widetilde{X},\widetilde{F})\to \Omega^{p}_{\{\lambda\}}(X,F)\otimes \mathbb{C}^{+}
\oplus \Omega_{\{\lambda\}}^{p}(X,Y,F)\otimes \mathbb{C}^{-},
\end{align}
$$\widetilde{\phi}(\sigma)={{\sqrt{2}}\over 2}(\sigma+\phi^*\sigma)|_{X}\otimes 1_{\mathbb{C}^{+}}+{\sqrt{2}\over 2}
(\sigma-\phi^*\sigma)|_{X}\otimes 1_{\mathbb{C}^{-}}.$$
The map $\widetilde{\phi}$ preserves the non-degenerate symmetric bilinear forms. In particular, with $\chi$ the nontrivial
character of $\mathbb{Z}_{2}$, we have for $g\in\mathbb{Z}_{2}$,
\begin{align}\label{3.191}
\left(b_{(0,+\infty),(\widetilde{X},g^{T\widetilde{X}},b^{\widetilde{F}})}\right)(g)=b_{(0,+\infty),(X,g^{TX},b^{F})}
\cdot \left(b_{(0,+\infty),(X,Y,g^{TX},b^{F})}\right)^{\chi(g)}.
\end{align}
\end{proposition}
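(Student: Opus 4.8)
The plan is to establish the doubling formula by constructing and analyzing the explicit map $\widetilde{\phi}$ in \eqref{3.10}, exactly in the spirit of \cite[Proposition 2.1]{BM}, but carried through for the symmetric bilinear (rather than Hermitian) setting. First I would note that since $g^{TX}$ and $b^{F}$ are of product structure near $Y$ (conditions \eqref{1.1} and \eqref{1.2}), the operator $D_b$ on $\widetilde{X}$ commutes with $\phi^*$; hence each generalized eigenspace $\Omega^p_{\{\lambda\}}(\widetilde{X},\widetilde{F})$ splits into $\pm 1$-eigenspaces of $\phi^*$, and the maps $\sigma\mapsto\frac{1}{\sqrt 2}(\sigma\pm\phi^*\sigma)|_X$ land in the eigenspaces on $X$. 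The key algebraic point is that a $\phi^*$-invariant form restricted to $X$ satisfies the absolute boundary condition, while a $\phi^*$-anti-invariant form restricted to $X$ satisfies the relative boundary condition: this follows from the product structure near $Y$, since in the collar an invariant form has vanishing normal derivative of its tangential part and vanishing normal component, and an anti-invariant form has vanishing tangential part at $Y$. One then checks compatibility with $d^F$ and its adjoint $d^{F_\#}_b$ so that $\widetilde{\phi}$ restricts to an isomorphism of the finite-dimensional complexes $\Omega^*_{\{\lambda\}}(\widetilde X,\widetilde F)^\pm \xrightarrow{\sim} \Omega^*_{\{\lambda\}}(X,F)_a$ resp. $\Omega^*_{\{\lambda\}}(X,Y,F)_r$ for $\lambda>0$, and of the harmonic (or generalized-$0$-eigenvalue) spaces for $\lambda=0$.

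The second step is to verify that $\widetilde{\phi}$ preserves the non-degenerate symmetric bilinear forms. Here one computes, for $\sigma,\tau$ supported on $\widetilde X$, that
\begin{align*}
\langle\sigma,\tau\rangle_{b,\widetilde X}
=\langle\sigma|_{j_1(X)},\tau|_{j_1(X)}\rangle_{b,X}+\langle\sigma|_{j_2(X)},\tau|_{j_2(X)}\rangle_{b,X}
=\langle\sigma|_{X},\tau|_{X}\rangle_{b,X}+\langle\phi^*\sigma|_{X},\phi^*\tau|_{X}\rangle_{b,X},
\end{align*}
using that $\phi$ is an isometry of $g^{T\widetilde X}$ and preserves $b^{\widetilde F}$, together with the fact that $\ast_{b^{F}}$ is $\phi$-equivariant (this is where the product structure of $b^F$ near $Y$ and the invariance of $b^{\widetilde F}$ enter). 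Polarizing the normalization constant $\sqrt 2/2$, one finds the cross terms $\langle\sigma|_X,\phi^*\tau|_X\rangle_b$ cancel between the $\mathbb{C}^+$ and $\mathbb{C}^-$ summands, giving that $\widetilde\phi$ is an isometry for the symmetric bilinear forms on each $\Omega^p_{\{\lambda\}}$, in particular for the induced forms on $\Omega^*_{[0,a]}$ and on the determinant lines of cohomology.

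The third step combines the above with the definition of the regularized equivariant zeta determinant. Since $\widetilde\phi$ intertwines $D_b^2$ with $D_b^2\oplus D_b^2$ on the $\pm$ summands, and since for $g$ the nontrivial element of $\mathbb{Z}_2$ the operator $g$ acts as $+1$ on $\Omega^*_{[0,a]}(\widetilde X,\widetilde F)^+$ and $-1$ on $\Omega^*_{[0,a]}(\widetilde X,\widetilde F)^-$, one gets ${\rm Tr}[g\,(D^2_{b,i}|_{(a,+\infty)})^{-s}]={\rm Tr}[(D^2_{b,i}|_{\Omega^*_{(a,+\infty)}(X,F)_a})^{-s}]-{\rm Tr}[(D^2_{b,i}|_{\Omega^*_{(a,+\infty)}(X,Y,F)_r})^{-s}]$, and similarly with $g=1$ one gets the sum. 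Differentiating at $s=0$ and taking the alternating product over $i$ weighted by $(-1)^i i$ yields, after setting $a=0$,
\begin{align*}
\left(b_{(0,+\infty),(\widetilde X,g^{T\widetilde X},b^{\widetilde F})}\right)(g)
= b_{(0,+\infty),(X,g^{TX},b^{F})}\cdot\left(b_{(0,+\infty),(X,Y,g^{TX},b^{F})}\right)^{\chi(g)},
\end{align*}
which is \eqref{3.191}. I expect the main obstacle to be the careful bookkeeping in Step 1: proving rigorously that the product structure near $Y$ forces the invariant/anti-invariant restrictions to satisfy the absolute/relative boundary conditions \emph{together with} their derivative conditions ($i^*d^{F_\#}_b\omega=0$, etc.), and that $\widetilde\phi$ is a chain map compatible with $d^F$ and $d^{F_\#}_b$ so that it respects the generalized eigenspace decompositions — this is where the non-self-adjointness of $D_b$ (because $b^F$ is only symmetric bilinear, not Hermitian) requires one to work with generalized eigenspaces and to invoke, as in \cite{SZ}, the non-degeneracy of the restricted bilinear forms rather than positivity.
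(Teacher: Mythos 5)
Your outline correctly identifies the shape of the proposition (split into $\pm$-eigenspaces of $\phi^*$, check that invariant restrictions satisfy absolute and anti-invariant ones satisfy relative boundary conditions, verify compatibility with the bilinear form, then pass through the zeta-regularized determinants), and Steps 2 and 3 are essentially right. But there is a real gap in Step 1: you assert that $\widetilde\phi$ ``restricts to an isomorphism of the finite-dimensional complexes,'' and you flag as the main difficulty the forward-direction boundary-condition bookkeeping. In fact the forward direction (well-definedness and injectivity) is the easy part and the paper simply delegates it to \cite[Proposition 1.27]{Luc}. What the paper's proof is almost entirely devoted to is \emph{surjectivity}, which you never establish: given an eigenform $\omega\in\Omega^p_{\{\lambda\}}(X,F)$ with the correct boundary conditions, the reflected form $\widetilde\omega$ (equal to $\omega$ on $X$ and $\phi^*\omega$ on $\phi(X)$) is, a priori, only continuous across $Y$; it is not at all clear that it is smooth, and there is no independent dimension count available.

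The paper closes this gap by a spectral-projection argument. One first observes (as in \cite{Luc}) that $\widetilde\omega$ is smooth on $\widetilde X\setminus Y$ and continuous on $\widetilde X$, hence in $L^2$. Using the collar/product structure one shows, for every smooth test form $\eta$ on $\widetilde X$,
\begin{align*}
\langle\widetilde\omega,\eta\rangle_{\widetilde b}=\langle\omega,i^*\eta\rangle_b+\langle\omega,i^*\phi^*\eta\rangle_b,
\end{align*}
and combining this with the $\langle\cdot,\cdot\rangle_b$-orthogonality of distinct generalized eigenspaces of $D_b^2$ on $X$ (which replaces the usual Hermitian orthogonality — this is precisely where the non-degeneracy results of \cite{SZ,M} enter) one concludes that $(Id-P_{\{\lambda\}})\widetilde\omega$ pairs to zero with everything, so $\widetilde\omega=P_{\{\lambda\}}\widetilde\omega$. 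Since the image of $P_{\{\lambda\}}$ consists of smooth forms, $\widetilde\omega$ is smooth, hence genuinely in $\Omega^p_{\{\lambda\}}(\widetilde X,\widetilde F)$. Without some version of this elliptic-regularity-via-spectral-projection argument, your claim that $\widetilde\phi$ is an isomorphism, rather than merely an injection, is unsupported, and the rest of the proof (in particular the equality of equivariant zeta determinants in \eqref{3.191}) does not follow.
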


\begin{proof}
By the same argument in \cite[Proposition 1.27]{Luc}, one finds that $\widetilde{\phi}$ is well-defined and injective.
For the surjectivity, we need to show that for $\omega\in\Omega^{p}_{\{\lambda\}}({X},{F})$, $\widetilde{\omega}=\omega$ on $X$, and $\widetilde{\omega}=\phi^*\omega$ on $\phi(X)$ is a smooth form on $\widetilde{X}$ with coefficients in $\widetilde{F}$, and thus $\widetilde{\omega}\in\Omega^{p}_{\{\lambda\}}(\widetilde{X},\widetilde{F})$. First as the case in \cite[Proposition 1.27]{Luc}, $\widetilde{\omega}$ is smooth on $\widetilde{X}\backslash Y$ and continuous on $\widetilde{X}$. Let $P_{\{\lambda\}}$ be the spectral projection onto $\Omega^{*}_{\{\lambda\}}(\widetilde{X},\widetilde{F})$, then we have the decomposition
\begin{align}
L^{2}\left(\Omega^*(\widetilde{X},\widetilde{F})\right)=
P_{\{\lambda\}}\left(L^{2}\left(\Omega^*(\widetilde{X},\widetilde{F})\right)\right)\oplus (Id-P_{\{\lambda\}})\left(L^{2}\left(\Omega^*(\widetilde{X},\widetilde{F})\right)\right).
\end{align}
So that
\begin{align}
\widetilde{\omega}=P_{\{\lambda\}}(\widetilde{\omega})+(Id-P_{\{\lambda\}})(\widetilde{\omega}).
\end{align}

Let $i:X\to \widetilde{X}=X\cup_{Y}X$ be the inclusion onto the first summand.
Then similar as it in \cite[Proposition 1.27]{Luc}, for any $\eta\in\Omega^*(\widetilde{X},\widetilde{F})$, we have
\begin{align}\label{3.21}
\langle \widetilde{\omega},\eta\rangle_{\widetilde{b}}=\langle \omega,i^*\eta\rangle_{b}+\langle \omega,i^*\phi^*\eta\rangle_{b}.
\end{align}
Then by (\ref{3.21}), the similar discussion in \cite[Proposition 1.27]{Luc} and the fact that for $\lambda\neq \mu$, $\Omega_{\{\lambda\}}^*(X,F)_{a}$ and $\Omega_{\{\mu\}}^*(X,F)_{a}$ are $\langle\ ,\rangle_{b}$-orthogonal (cf. \cite{M}), one finds
\begin{align}
\langle(Id-P_{\{\lambda\}})(\widetilde{\omega}),\eta\rangle_{\widetilde{b}}=
\langle\widetilde{\omega},(Id-P_{\{\lambda\}})\eta\rangle_{\widetilde{b}}=0,
\end{align}
then $(Id-P_{\{\lambda\}})(\widetilde{\omega})=0$. So we have
$\widetilde{\omega}=P_{\{\lambda\}}(\widetilde{\omega})$. Since $\widetilde{\omega}$ is continuous, then we get that $\widetilde{\omega}$ is smooth.
\end{proof}

By Proposition \ref{t3.3} for $\lambda=0$, we have a natural isomorphism of $\mathbb{Z}_{2}$-vector spaces

$$\widetilde{\phi}:\Omega^{\bullet}_{\{0\}}(\widetilde{X},\widetilde{F})\to \Omega^{\bullet}_{\{0\}}(X,F)_{a}\otimes \mathbb{C}^{+}\oplus \Omega^{\bullet}_{\{0\}}(X,F)_{r}\otimes \mathbb{C}^{-},$$
\begin{align}\label{5.31}
\widetilde{\phi}(\sigma)={\sqrt{2}\over 2}\cdot (\sigma+\phi^*\sigma)|_{X}+{\sqrt{2}\over 2}\cdot (\sigma-\phi^*\sigma)|_{X}.
\end{align}
Then $\widetilde{\phi}$ preserves the non-degenerate symmetric bilinear form. From \cite[Proposition 3]{M}, we have
that the inclusion $i:\Omega_{\{0\}}^*(X,F)_{r}\to \Omega^*(X,F)_{r}$ induces an isomorphism on cohomology. We also denote by $\widetilde{\phi}$ the induced map on the cohomology.

Let $b_{{\rm det}H^{\bullet}(\Omega^{\bullet}_{\{0\}}(\widetilde{X},\widetilde{F})^{\pm})}$ be the induced non-degenerate symmetric bilinear form on ${\rm det}H^{\bullet}(\Omega^{\bullet}_{\{0\}}(\widetilde{X},\widetilde{F})^{\pm})$. Then for $\mu=(\mu_{1},\mu_{2})\in {\rm det}(H^{\bullet}{(\widetilde{X},\widetilde{F}}),\mathbb{Z}_{2})$, $g\in\mathbb{Z}_{2}$, the equivariant Ray-Singer symmetric bilinear torsion is defined by
\begin{multline}\label{5.71}
\log \left(b^{\rm RS}_{{\rm det}(H^{\bullet}(\widetilde{X},\widetilde{F}),\mathbb{Z}_{2})}(\mu)\right)(g)=\log\left( b_{{\rm det}(H^{\bullet}(\Omega^{\bullet}_{\{0\}}(\widetilde{X},\widetilde{F}))^{+})}(\mu_{1})\right)\\+\chi(g)\log \left(b_{{\rm det}(H^{\bullet}(\Omega^{\bullet}_{\{0\}}(\widetilde{X},\widetilde{F}))^{-})}(\mu_{2})\right)+\log \left(b_{(0,+\infty),(\widetilde{X},g^{T\widetilde{X}},b^{\widetilde{F}})}\right)(g).
\end{multline}

Note that $\widetilde{\phi}$ in (\ref{5.31}) induces isomorphisms
$$\widetilde{\phi}_{1}:H^{\bullet}(\widetilde{X},\widetilde{F})^{+}\to H^{\bullet}(X,F),\
\widetilde{\phi}_{2}:H^{\bullet}(\widetilde{X},\widetilde{F})^{-}\to H^{\bullet}(X,F).$$

By (\ref{3.191}), (\ref{5.31}) and (\ref{5.71}), for $\mu=(\mu_{1},\mu_{2})\in{\rm det}(H^{\bullet}(\widetilde{X},\widetilde{F}),\mathbb{Z}_{2})$, $g\in\mathbb{Z}_{2}$,
\begin{multline}\label{5.82}
\log \left(b^{\rm RS}_{{\rm det}(H^{\bullet}(\widetilde{X},\widetilde{F}),\mathbb{Z}_{2})}(\mu)\right)(g)\\=\log \left(b^{\rm RS}_{{\rm det}H^{\bullet}(X,F)}(\widetilde{\phi}_{1}\mu_{1})\right)+\chi(g)\log \left(b^{\rm RS}_{{\rm det}H^{\bullet}(X,Y,F)}(\widetilde{\phi}_{2}\mu_{2})\right).
\end{multline}

From (\ref{5.82}) and the anomaly formula \cite[Theorem 3]{M}, one can get the following anomaly formula
\begin{theorem}\label{3.4}
Let $g^{TX}_{u}$ be a smooth one-parameter Riemannian metrics on $X$ which is product near $Y$ and $b_{u}^{F}$ is a smooth one-parameter non-degenerate symmetric bilinear forms which is product near $Y$ such that $g_{0}^{TX}=g^{TX}$, $g_{1}^{TX}=g'^{TX}$ and $b_{0}^{F}=b^{F}$, $b_{1}^{F}=b'^{F}$, then The following identity holds,
\begin{multline}\label{3.192}
\left({{b'^{\rm RS}_{{\rm det}(H^{\bullet}(\widetilde{X},\widetilde{F}),\mathbb{Z}_{2})}}\over{
b^{\rm RS}_{{\rm det}(H^{\bullet}(\widetilde{X},\widetilde{F}),\mathbb{Z}_{2})}}}\right)(\phi)=\exp\left(\int_{Y}\log\left({\rm det}\left(\left(b^{F}\right)^{-1}b'^{F}\right)\right)e\left(TY,\nabla^{TY}\right)\right)\\
\cdot \exp\left(-\int_{Y}\theta\left(F,b'^{F}\right)\widetilde{e}\left(TY,\nabla^{TY},\nabla'^{TY}\right)\right).
\end{multline}
\end{theorem}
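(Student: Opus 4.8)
The plan is to deduce Theorem~\ref{3.4} from the equivariant splitting formula~(\ref{5.82}) together with the closed-manifold anomaly formula of Su--Zhang applied to the double $\widetilde{X}$. First I would recall that, by~(\ref{5.82}), for the nontrivial element $\phi\in\mathbb{Z}_2$ one has
\begin{align}\label{plan1}
\log\left(b^{\rm RS}_{{\rm det}(H^{\bullet}(\widetilde{X},\widetilde{F}),\mathbb{Z}_{2})}(\mu)\right)(\phi)
=\log\left(b^{\rm RS}_{{\rm det}H^{\bullet}(X,F)}(\widetilde{\phi}_{1}\mu_{1})\right)
-\log\left(b^{\rm RS}_{{\rm det}H^{\bullet}(X,Y,F)}(\widetilde{\phi}_{2}\mu_{2})\right),
\end{align}
since $\chi(\phi)=-1$. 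Thus the ratio in~(\ref{3.192}) is the difference of the logarithmic variations of the absolute and relative symmetric bilinear torsions on $X$ as the couple $(g^{TX}_u,b^F_u)$ moves from $(g^{TX},b^F)$ to $(g'^{TX},b'^F)$. Both of these variations are governed by the anomaly formulas~(\ref{1.3}) and~(\ref{1.4}) quoted from \cite[Theorem 3]{M}.

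Next I would subtract~(\ref{1.4}) from~(\ref{1.3}) (both evaluated with the ``$0$'' data being the primed data $(g'^{TX},b'^F)$ and the ``$1$'' data the unprimed data, or the reverse — one just has to fix a convention and track signs). The two interior integrals over $X$ — the term $\int_X\log(b_{0,{\rm det}F}/b_{1,{\rm det}F})e(TX,\nabla_1^{TX})$ and the Mathai--Quillen term $\int_X\theta(F,b_0^F)(\nabla f)^*(\psi(TX,\nabla_0^{TX})-\psi(TX,\nabla_1^{TX}))$ — are \emph{identical} in~(\ref{1.3}) and~(\ref{1.4}), so they cancel in the difference. What survives is exactly twice the two boundary integrals:
\begin{align}\label{plan2}
\log\left({{b'^{\rm RS}_{{\rm det}H^{\bullet}(X,Y,F)}}\over{b^{\rm RS}_{{\rm det}H^{\bullet}(X,Y,F)}}}\right)
-\log\left({{b'^{\rm RS}_{{\rm det}H^{\bullet}(X,F)}}\over{b^{\rm RS}_{{\rm det}H^{\bullet}(X,F)}}}\right)
=-\int_{Y}\log\left({{b'_{{\rm det}F}}\over{b_{{\rm det}F}}}\right)e(TY,\nabla_{1}^{TY})
-\int_{Y}\widetilde{e}(TY,\nabla^{TY}_{1},\nabla^{TY}_{0})\,\theta(F,b^{F}_{0}),
\end{align}
up to the overall sign coming from which torsion is in the numerator. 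Comparing with~(\ref{plan1}), and writing $\log(b'_{{\rm det}F}/b_{{\rm det}F})=-\log({\rm det}((b^F)^{-1}b'^F))$ (recall $b_{{\rm det}F}$ transforms inversely to $b^F$ under change of bilinear form), one recovers the two factors $\exp(\int_Y\log{\rm det}((b^F)^{-1}b'^F)\,e(TY,\nabla^{TY}))$ and $\exp(-\int_Y\theta(F,b'^F)\widetilde{e}(TY,\nabla^{TY},\nabla'^{TY}))$ appearing on the right of~(\ref{3.192}). One also uses $\theta(F,b^F_0)$ and $\theta(F,b'^F)$ interchangeably in the Chern--Simons term, since the difference changes $\widetilde e$ by an exact form and $e(TY)$ integrates to the Euler characteristic times a constant — or more cleanly, one notes the Kamber--Tondeur form only enters paired with the closed form $e(TY)$ or with $\widetilde e$ whose boundary is $e$, so the ambiguity is immaterial here.

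The main obstacle, and the step requiring genuine care rather than bookkeeping, is the precise matching of normalizations: the factor of $1/2$ in front of the boundary terms in~(\ref{1.3})--(\ref{1.4}), the factor $\sqrt{2}/2$ in the isomorphism $\widetilde\phi$ of~(\ref{5.31}), and the fact that $\widetilde\phi$ is claimed to preserve the symmetric bilinear forms must all be reconciled so that the ``$2\times(\text{boundary term})$'' produced by subtracting the two anomaly formulas lands exactly on the stated right-hand side with no spurious factor of $2$. Concretely, I would verify that $\widetilde\phi$ in~(\ref{5.31}) is an isometry for the induced forms on the finite-dimensional harmonic complexes (this is the content of Proposition~\ref{t3.3} at $\lambda=0$), so that $b^{\rm RS}_{{\rm det}(H^\bullet(\widetilde X,\widetilde F),\mathbb{Z}_2)}(\mu)(\phi)$ really is the product, not merely proportional to it; then the factor-$2$ discrepancy between ``difference of~(\ref{1.3}) and~(\ref{1.4})'' and ``the $\mathbb{Z}_2$-variation'' is absorbed exactly by the $\chi(\phi)=-1$ versus $\chi(1)=+1$ asymmetry combined with the $1/2$'s, leaving~(\ref{3.192}). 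I expect the remaining identities — that $e(TX,\nabla^{TX})$ has no effect on the difference and that the interior Mathai--Quillen currents cancel — to be immediate from inspection of~(\ref{1.3}) and~(\ref{1.4}).
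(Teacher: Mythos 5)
Your strategy is exactly the paper's: plug the splitting formula~(\ref{5.82}) (with $\chi(\phi)=-1$) into the boundary anomaly formulas~(\ref{1.3}) and~(\ref{1.4}), watch the interior integrals over $X$ cancel, and read off the surviving boundary contributions. The paper itself supplies nothing more than the sentence ``From~(\ref{5.82}) and the anomaly formula [M, Theorem~3], one can get the following anomaly formula,'' so your argument is in substance the intended one.

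Two minor corrections, though. First, in your equation~(\ref{plan2}) the Chern--Simons term should carry the \emph{same} sign as the Euler-form term: in~(\ref{1.3}) both boundary integrals come with $-\tfrac12$, in~(\ref{1.4}) both come with $+\tfrac12$, so the difference gives both with coefficient $+1$ (or both with $-1$, depending on the order of subtraction), never with opposite signs. Second, the induced form on $\det F$ scales \emph{directly}, not inversely: $b'_{\det F}/b_{\det F}=\det\bigl((b^F)^{-1}b'^F\bigr)$, so $\log(b'_{\det F}/b_{\det F})=+\log\det\bigl((b^F)^{-1}b'^F\bigr)$. With both signs corrected, the two boundary terms land on the right-hand side of~(\ref{3.192}) after invoking $\widetilde e(\nabla_1,\nabla_0)=-\widetilde e(\nabla_0,\nabla_1)$ and the transgression identity $d\log\det((b^F)^{-1}b'^F)=\theta(F,b'^F)-\theta(F,b^F)$ together with $d\widetilde e=e(\nabla')-e(\nabla)$ and integration by parts over the closed manifold $Y$; that same computation is what justifies swapping $\theta(F,b^F)$ for $\theta(F,b'^F)$ and $\nabla^{TY}$ for $\nabla'^{TY}$, which you correctly flag as needing a justification. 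Finally, your ``main obstacle'' paragraph overthinks the bookkeeping: by the time Theorem~\ref{3.4} is stated,~(\ref{5.82}) has already been established as an \emph{exact} identity, so there is no $\sqrt2/2$ normalization left to reconcile, and the $\pm\tfrac12$'s in~(\ref{1.3})--(\ref{1.4}) combine with $\chi(\phi)=-1$ to yield coefficient~$1$ automatically.
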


\subsection{Doubling formula for symmetric bilinear Milnor torsion}

Let $\widetilde{F}=F\cup_{Y}F$ be the flat complex vector bundle with non-degenerate symmetric bilinear form $b^{\widetilde{F}}$ on $\widetilde{X}=X\cup_{Y}X$ induced by $(F,b^{F})$.

Let $f$ be a Morse function on $X$ satisfying Lemma \ref{t1.1}, which is induced by a $\mathbb{Z}_{2}$-equivariant Morse function $\widetilde{f}$ on $\widetilde{X}$ with critical set $\widetilde{B}=\{x\in\widetilde{X};d\widetilde{f}(x)=0\}$. Let $\widetilde{W}^{u}(x)$ be the unstable set of $x\in\widetilde{B}\subset \widetilde{X}$.

Let $C^{\bullet}(\widetilde{W}^{u},\widetilde{F})^{\pm}$ and $H^{\bullet}(\widetilde{X},\widetilde{F})^{\pm}$ be the $\pm 1$-eigenspaces of the $\mathbb{Z}_{2}$-action induced by $\phi$ on $C^{\bullet}(\widetilde{W}^{u},\widetilde{F})$ and $H^{\bullet}(\widetilde{X},\widetilde{F})$; then $H^{\bullet}(\widetilde{X},\widetilde{F})^{\pm}$ is the cohomology of the complex $(C^{\bullet}(\widetilde{W}^{u},\widetilde{F})^{\pm},\widetilde{\partial})$.

Let $\mathbb{C}^{+}$, $\mathbb{C}^{-}$ be the trivial and nontrivial one dimension complex $\mathbb{Z}_{2}$-representation, respectively, and let $1_{\mathbb{C}^{+}}$, $1_{\mathbb{C}^{-}}$ be their unit elements.

Following \cite[(1.10)]{BZ2}, we define
\begin{align}
{\rm det}\left(H^{\bullet}(\widetilde{X},\widetilde{F}),\mathbb{Z}_{2}\right)={\rm det}\left(H^{\bullet}(\widetilde{X},\widetilde{F})^{+}\right)\otimes \mathbb{C}^{+}\oplus {\rm det}\left(H^{\bullet}(\widetilde{X},\widetilde{F})^{-}\right)\otimes \mathbb{C}^{-}.
\end{align}

Let $b_{{\rm det}H^{\bullet}(\widetilde{X},\widetilde{F})^{\pm}}$ be the symmetric bilinear form on ${\rm det}H^{\bullet}(\widetilde{X},\widetilde{F})^{\pm}$ defined via the canonical isomorphism
$${\rm det}H^{\bullet}(\widetilde{X},\widetilde{F})^{\pm}\cong {\rm det}C^{\bullet}(\widetilde{W}^{u},\widetilde{F})^{\pm}.$$
For $\mu=(\mu_{1},\mu_{2})\in{\rm det}(H^{\bullet}(\widetilde{X},\widetilde{F}),\mathbb{Z}_{2})$, $\phi\in\mathbb{Z}_{2}$, and $\chi$ the nontrivial character of $\mathbb{Z}_{2}$, we introduce the equivariant symmetric bilinear Milnor torsion by
\begin{multline}
b^{M,\nabla f}_{{\rm det}(H^{\bullet}(\widetilde{X},\widetilde{F}),\mathbb{Z}_{2})}(\mu)(\phi)=b_{{\rm det}H^{\bullet}(\widetilde{X},\widetilde{F})^+}(\mu_{1})\cdot b^{\chi(\phi)}_{{\rm det}H^{\bullet}(\widetilde{X},\widetilde{F})^-}(\mu_{2})\\
=b_{{\rm det}H^{\bullet}(\widetilde{X},\widetilde{F})^+}(\mu_{1})\cdot b^{-1}_{{\rm det}H^{\bullet}(\widetilde{X},\widetilde{F})^-}(\mu_{2}).
\end{multline}

We have a $\mathbb{Z}_{2}$-equivariant isomorphism of complexes
\begin{align}\label{3.33}
\gamma: C^{\bullet}(W^{u},F)\otimes \mathbb{C}^{+}\oplus C^{\bullet}(W^{u}/W^{u}_{Y},F)\otimes
\mathbb{C}^{-}\to C^{\bullet}(\widetilde{W}^{u},\widetilde{F}),
\end{align}
given by
\begin{multline}
\gamma(\mathfrak{a}^*\otimes 1_{\mathbb{C}^{+}}\oplus \mathfrak{b}^*\otimes 1_{\mathbb{C}^{-}})\\=
{\sqrt{2}\over 2}((j_{1}^{-1})^*\mathfrak{a}^*+(j_{2}^{-1})^*\mathfrak{a})+{\sqrt{2}\over 2}((j_{1}^{-1})^*\mathfrak{b}^*
-(j_{2}^{-1})^*\mathfrak{b}^*),
\end{multline}
which induces a $\mathbb{Z}_{2}$-isomorphism
\begin{align}\label{3.14}
\gamma: H^{\bullet}(X,F)\otimes \mathbb{C}^{+}\oplus H^{\bullet}(X,Y,F)\otimes \mathbb{C}^{-}\to H^{\bullet}
(\widetilde{X},\widetilde{F}).
\end{align}
Note that as complex vector spaces, we have
\begin{align}
C^{j}(W^{u},F)=\bigoplus_{x\in B,{\rm ind}(x)=j}\left[
W^{u}(x)\right]^*\otimes F_{x},
\end{align}
$$C^{j}(W^{u}/W^{u}_{Y},F)=\bigoplus_{x\in B\setminus Y,{\rm ind}(x)=j}\left[W^{u}(x)\right]^*\otimes F_{x}.$$
Then $\gamma$ as a map from $C^{j}(W^{u}/W^{u}_{Y},F)\otimes \mathbb{C}^{+}\oplus C^{j}(W^{u}/W^{u}_{Y},F)\otimes
\mathbb{C}^{-}$ into $C^{\bullet}(\widetilde{W}^{u},\widetilde{F})$ preserves the symmetric bilinear form and for
$\mathfrak{a}^*\in[W^{u}(x)]^*\otimes F_{x}$ with $x\in Y$, we have
\begin{align}\label{3.16}
\gamma(\mathfrak{a}^*\otimes 1_{\mathbb{C}^{+}})=\sqrt{2}\mathfrak{a}^*.
\end{align}

Let $b_{{\rm det}C^{\bullet}(\widetilde{W}^{u},\widetilde{F})^{\pm}}$ be the symmetric bilinear form
on ${\rm det}H^{\bullet}(\widetilde{X},\widetilde{F})^{\pm}$. For $\mu=(\mu_{1},\mu_{2})\in {\rm det}(H^{\bullet}
(\widetilde{X},\widetilde{F}),
\mathbb{Z}_{2})$, $g\in\mathbb{Z}_{2}$, and $\chi$ the nontrivial character of $\mathbb{Z}_{2}$, we have defined
the equivariant Milnor symmetric bilinear torsion by
\begin{align}\label{3.18}
b^{M,\nabla f}_{{\rm det}(H^{\bullet}(\widetilde{X},\widetilde{F}),\mathbb{Z}_{2})}(\mu)(g)=b_{{\rm det}C^{\bullet}
(\widetilde{W}^{u},\widetilde{F})^{+}}(\mu_{1})\cdot (b_{{\rm det}C^{\bullet}
(\widetilde{W}^{u},\widetilde{F})^{-}}(\mu_{2}))^{\chi(g)}.
\end{align}

Now $\gamma$ in (\ref{3.14}) induces isomorphisms
$$\gamma_{1}^{-1}:H^{\bullet}(\widetilde{X},\widetilde{F})^{+}\to H^{\bullet}(X,F),$$
\begin{align}\label{3.19}
\gamma_{2}^{-1}:H^{\bullet}(\widetilde{X},\widetilde{F})^{-}\to H^{\bullet}(X,Y,F).
\end{align}
From (\ref{3.16}), (\ref{3.18}) and (\ref{3.19}), we get for $\mu=(\mu_{1},\mu_{2})\in{\rm det}(H^{\bullet}(\widetilde{X}
,\widetilde{F}),\mathbb{Z}_{2})$, $g\in\mathbb{Z}_{2}$,
\begin{multline}\label{3.28}
\log\left(b^{M,\nabla f}_{{\rm det}(H^{\bullet}(\widetilde{X},\widetilde{F}),\mathbb{Z}_{2})}(\mu)\right)(g)
=\log(2)\sum_{x\in B\cap Y}(-1)^{{\rm ind}(x)}{\rm rk}(F)\\
+\log \left(b^{M,\nabla f}_{{\rm det}H^{\bullet}(X,F)}(\gamma_{1}^{-1}\mu_{1})\right)+\chi(g)\log
\left(b^{M,\nabla f}_{{\rm det}H^{\bullet}(X,Y,F)}(\gamma_{2}^{-1}\mu_{2})\right).
\end{multline}

From (\ref{3.28}) and the anomaly formulas for $b^{M,\nabla f}_{{\rm det}H^{\bullet}(X,F)}$, $b^{M,\nabla f}_{{\rm det}H^{\bullet}(X,Y,F)}$, we easily get the following anomaly formula

\begin{proposition}\label{3.5}
Let $b_{u}^{F}$, $0\leq u\leq 1$, be a smooth one-parameter non-degenerate symmetric bilinear forms which are product near the boundary $Y$, then we have
\begin{align}
\left({{b^{M,\nabla f}_{1,{\rm det}(H^{\bullet}(\widetilde{X},\widetilde{F}),\mathbb{Z}_{2})}}\over{b^{M,\nabla f}_{0,{\rm det}(H^{\bullet}(\widetilde{X},\widetilde{F}),\mathbb{Z}_{2})}}}\right)(\phi)=\prod_{x\in B_{Y}}\left({\rm det}\left({{b_{1}^{F_{x}}}\over{b_{0}^{F_{x}}}}\right)\right)^{(-1)^{{\rm ind}_{Y}(x)}}.
\end{align}
\end{proposition}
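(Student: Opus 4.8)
The plan is to reduce the statement to the splitting identity (\ref{3.28}) combined with the elementary anomaly formulas for the absolute and the relative symmetric bilinear Milnor torsions on $X$ under a deformation of $b^{F}$. Since every $b^{F}_{u}$ is of product structure near $Y$, the doubling construction above produces a smooth one-parameter family $b^{\widetilde{F}}_{u}$ on $\widetilde{X}$, and the $\mathbb{Z}_{2}$-equivariant isomorphism $\gamma$ of (\ref{3.33})--(\ref{3.14}), the induced maps $\gamma_{1}^{-1},\gamma_{2}^{-1}$ of (\ref{3.19}), and the canonical isomorphisms ${\rm det}\,H^{\bullet}\cong{\rm det}\,C^{\bullet}$ depend only on $(f,\nabla f)$ and on the (co)homology, not on $u$. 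Hence (\ref{3.28}) holds for every $u$ with the same, $u$-independent, classes $\gamma_{1}^{-1}\mu_{1}$ and $\gamma_{2}^{-1}\mu_{2}$. Evaluating (\ref{3.28}) at $g=\phi$, so that $\chi(\phi)=-1$, and subtracting the value at $u=0$ from the value at $u=1$, the combinatorial term $\log(2)\sum_{x\in B\cap Y}(-1)^{{\rm ind}(x)}{\rm rk}(F)$ cancels and one is left with
\[
\log\left({{b^{M,\nabla f}_{1,{\rm det}(H^{\bullet}(\widetilde{X},\widetilde{F}),\mathbb{Z}_{2})}}\over{b^{M,\nabla f}_{0,{\rm det}(H^{\bullet}(\widetilde{X},\widetilde{F}),\mathbb{Z}_{2})}}}\right)(\phi)=\log\left({{b^{M,\nabla f}_{1,{\rm det}H^{\bullet}(X,F)}}\over{b^{M,\nabla f}_{0,{\rm det}H^{\bullet}(X,F)}}}\right)-\log\left({{b^{M,\nabla f}_{1,{\rm det}H^{\bullet}(X,Y,F)}}\over{b^{M,\nabla f}_{0,{\rm det}H^{\bullet}(X,Y,F)}}}\right).
\]

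Next I would establish the two scalar anomaly formulas. In the absolute case, $C^{j}(W^{u},F)=\bigoplus_{x\in B,\,{\rm ind}(x)=j}[W^{u}(x)]^{*}\otimes F_{x}$, and by construction the form $b^{C^{\bullet}(W^{u},F)}_{u}$ is the orthogonal direct sum over critical points of the forms $b^{F_{x}}_{u}$ carried on the factors $[W^{u}(x)]^{*}\otimes F_{x}$ through the fixed generator of the line $[W^{u}(x)]^{*}$. Hence there is a block-diagonal automorphism $A_{j}$ of $C^{j}(W^{u},F)$ with $b^{C^{j}}_{1}(\,\cdot\,,\,\cdot\,)=b^{C^{j}}_{0}(A_{j}\,\cdot\,,\,\cdot\,)$ and ${\rm det}\,A_{j}=\prod_{x\in B,\,{\rm ind}(x)=j}{\rm det}(b^{F_{x}}_{1}/b^{F_{x}}_{0})$; passing to ${\rm det}\,C^{\bullet}(W^{u},F)=\bigotimes_{j}({\rm det}\,C^{j}(W^{u},F))^{(-1)^{j}}$ and using the $u$-independent canonical isomorphism with ${\rm det}\,H^{\bullet}(X,F)$ gives
\[
{{b^{M,\nabla f}_{1,{\rm det}H^{\bullet}(X,F)}}\over{b^{M,\nabla f}_{0,{\rm det}H^{\bullet}(X,F)}}}=\prod_{x\in B}\left({\rm det}\left({{b^{F_{x}}_{1}}\over{b^{F_{x}}_{0}}}\right)\right)^{(-1)^{{\rm ind}(x)}}.
\]
The relative case is verbatim the same, once one notes that $C^{\bullet}(W^{u}/W^{u}_{Y},F)={\rm Ker}\,\jmath$ is, as a graded vector space, $\bigoplus_{x\in B\setminus Y}[W^{u}(x)]^{*}\otimes F_{x}$, and that the form $b^{C^{\bullet}(W^{u}/W^{u}_{Y},F)}_{u}$ is the restriction of the already block-diagonal form on $C^{\bullet}(W^{u},F)$, i.e. the orthogonal direct sum of the $b^{F_{x}}_{u}$ over the interior critical points; this produces the same product with $B\setminus Y$ in place of $B$.

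Substituting the last two displays into the first, the product over $B$ divided by the product over $B\setminus Y$ collapses to the product over $B\cap Y=B_{Y}$, giving
\[
\left({{b^{M,\nabla f}_{1,{\rm det}(H^{\bullet}(\widetilde{X},\widetilde{F}),\mathbb{Z}_{2})}}\over{b^{M,\nabla f}_{0,{\rm det}(H^{\bullet}(\widetilde{X},\widetilde{F}),\mathbb{Z}_{2})}}}\right)(\phi)=\prod_{x\in B_{Y}}\left({\rm det}\left({{b^{F_{x}}_{1}}\over{b^{F_{x}}_{0}}}\right)\right)^{(-1)^{{\rm ind}(x)}}.
\]
Finally, by Lemma \ref{t1.1} one has $d^{2}f(x)|_{\mathfrak{n}}>0$ for $x\in B_{Y}$, and since $g^{TX}$ and $b^{F}$ are of product structure near $Y$ the Hessian $d^{2}f(x)$ splits as $d^{2}(f|_{Y})(x)\oplus d^{2}f(x)|_{\mathfrak{n}}$ with positive-definite normal block; hence ${\rm ind}(x)={\rm ind}_{Y}(x)$ for $x\in B_{Y}$, and replacing ${\rm ind}(x)$ by ${\rm ind}_{Y}(x)$ yields exactly the asserted identity.

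The argument is essentially the bookkeeping of determinant lines, so there is no serious obstacle; the one point that needs genuine care is the compatibility claim used above, namely that the symmetric bilinear form induced on the relative Milnor complex ${\rm Ker}\,\jmath$ coincides with the orthogonal direct sum of the $b^{F_{x}}_{u}$ over the interior critical points. This holds because ${\rm Ker}\,\jmath$ consists of the functionals on $C_{\bullet}(W^{u},F^{*})$ vanishing on the subcomplex $C_{\bullet}(W^{u}_{Y},F^{*})$, hence is spanned by the lines $[W^{u}(x)]^{*}\otimes F_{x}$ with $x\in B\setminus Y$, on which the ambient form of $C^{\bullet}(W^{u},F)$ is already block-diagonal; the restriction is therefore block-diagonal over the interior critical points, as required. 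Everything else is formal.
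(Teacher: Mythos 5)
Your proof is correct and follows the same route the paper takes, which it only sketches as ``from (\ref{3.28}) and the anomaly formulas for $b^{M,\nabla f}_{{\rm det}H^{\bullet}(X,F)}$ and $b^{M,\nabla f}_{{\rm det}H^{\bullet}(X,Y,F)}$.'' You supply the linear-algebra content the paper leaves implicit: the block-diagonal structure of the deformed form on $C^{\bullet}(W^{u},F)$ and $C^{\bullet}(W^{u}/W^{u}_{Y},F)$ indexed by critical points, the resulting scalar anomaly formulas as products of $\det(b^{F_x}_1/b^{F_x}_0)^{(-1)^{{\rm ind}(x)}}$, the cancellation of the $u$-independent $\log 2$ term and of the interior contributions after inserting into (\ref{3.28}) with $\chi(\phi)=-1$, and finally the identification ${\rm ind}(x)={\rm ind}_{Y}(x)$ on $B_{Y}$ coming from $d^{2}f|_{\mathfrak{n}}>0$ (which the paper itself records when defining $\widetilde{\chi}'_{\phi}$). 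The one point you rightly flag as needing care — that the form on ${\rm Ker}\,\jmath$ is the restriction of the ambient block-diagonal form and remains block-diagonal over $B\setminus Y$ — is handled correctly, since the paper's explicit description $C^{j}(W^{u}/W^{u}_{Y},F)=\bigoplus_{x\in B\setminus Y,\,{\rm ind}(x)=j}[W^{u}(x)]^{*}\otimes F_{x}$ makes this immediate.
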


\subsection{Comparison of $b^{M,\nabla f}_{{\rm det}(H^{\bullet}(\widetilde{X},\widetilde{F}),\mathbb{Z}_{2})}$
and $b^{\rm RS}_{{\rm det}(H^{\bullet}(\widetilde{X},\widetilde{F}),\mathbb{Z}_{2})}$}

In this section, we will compare $b^{M,\nabla f}_{{\rm det}(H^{\bullet}(\widetilde{X},\widetilde{F}),\mathbb{Z}_{2})}$
and $b^{\rm RS}_{{\rm det}(H^{\bullet}(\widetilde{X},\widetilde{F}),\mathbb{Z}_{2})}$, we will use the method in \cite{SZ}.

\begin{theorem}({\rm Compare\ with\ \cite[Theorem 5.1]{BZ2}})\label{t3.8}
Let $\phi\in\mathbb{Z}_{2}$ be element induced by the involution on $\widetilde{X}$, then the following identity holds,
\begin{multline}
{{b^{\rm RS}_{{\rm det}(H^{\bullet}(\widetilde{X},\widetilde{F}),\mathbb{Z}_{2})}}\over{b^{M,\nabla f}_{{\rm det}(H^{\bullet}(\widetilde{X},\widetilde{F}),\mathbb{Z}_{2})}}}(\phi)=\\
\exp\left(-\int_{Y}\theta(\widetilde{F},b^{\widetilde{F}})(\nabla f)^*\psi(TY,\nabla^ {TY})+{\rm rk}(F)\chi(Y)\log (2)\right).
\end{multline}
\end{theorem}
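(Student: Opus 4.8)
The plan is to reduce the $\mathbb{Z}_2$-equivariant comparison on $\widetilde{X}$ to the already-known non-equivariant comparison theorems on $X$ (with absolute and relative boundary conditions) established in Su--Zhang \cite{SZ}, together with the doubling dictionaries for both torsions built in the previous two subsections. Concretely, the equivariant Ray--Singer torsion splits, via \eqref{5.82}, as
\begin{align*}
\log\left(b^{\rm RS}_{{\rm det}(H^{\bullet}(\widetilde{X},\widetilde{F}),\mathbb{Z}_{2})}(\mu)\right)(\phi)
=\log\left(b^{\rm RS}_{{\rm det}H^{\bullet}(X,F)}(\widetilde{\phi}_{1}\mu_{1})\right)
-\log\left(b^{\rm RS}_{{\rm det}H^{\bullet}(X,Y,F)}(\widetilde{\phi}_{2}\mu_{2})\right),
\end{align*}
while the equivariant Milnor torsion splits, via \eqref{3.28}, as
\begin{align*}
\log\left(b^{M,\nabla f}_{{\rm det}(H^{\bullet}(\widetilde{X},\widetilde{F}),\mathbb{Z}_{2})}(\mu)\right)(\phi)
=\log(2)\,{\rm rk}(F)\sum_{x\in B\cap Y}(-1)^{{\rm ind}(x)}
+\log\left(b^{M,\nabla f}_{{\rm det}H^{\bullet}(X,F)}(\gamma_{1}^{-1}\mu_{1})\right)
-\log\left(b^{M,\nabla f}_{{\rm det}H^{\bullet}(X,Y,F)}(\gamma_{2}^{-1}\mu_{2})\right).
\end{align*}
Thus the ratio on $\widetilde{X}$ decouples into the absolute-condition ratio on $X$, minus the relative-condition ratio on $X$, plus the explicit factor $-\log(2)\,{\rm rk}(F)\sum_{x\in B\cap Y}(-1)^{{\rm ind}(x)}$ coming from the mismatch of normalizations at boundary critical points (cf.\ \eqref{3.16}).

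Next I would apply the Su--Zhang Cheeger--M\"uller theorem for symmetric bilinear torsions in each of the two boundary conditions. This is the main input: I expect that in \cite{SZ}, or by a routine adaptation of the Bismut--Zhang/Br\"uning--Ma machinery, one has
\begin{align*}
{b^{\rm RS}_{{\rm det}H^{\bullet}(X,F)}\over b^{M,\nabla f}_{{\rm det}H^{\bullet}(X,F)}}
=\exp\left(-\int_{X}\theta(F,b^{F})(\nabla f)^*\psi(TX,\nabla^{TX})+({\rm boundary\ term})_a\right),
\end{align*}
and similarly with a $({\rm boundary\ term})_r$ for the relative condition; the two interior integrals over $X$ are identical and cancel in the difference, leaving only a difference of boundary contributions supported on $Y$. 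Since the metric and bilinear form are of product type near $Y$, the Mathai--Quillen current $\psi(TX,\nabla^{TX})$ restricts, along $Y$, to $\psi(TY,\nabla^{TY})$ (with the normal factor contributing trivially in the product model), and the boundary terms are governed by the double of $Y$ inside $\widetilde{X}$; their difference should collapse to $-\int_{Y}\theta(\widetilde{F},b^{\widetilde{F}})(\nabla f)^*\psi(TY,\nabla^{TY})$. Here I would use that $\theta(\widetilde{F},b^{\widetilde{F}})|_Y=\theta(F,b^F)|_Y$ and that $\nabla f|_Y\in TY$ by Lemma \ref{t1.1}.

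To pin down the remaining constant, I would track the Euler-characteristic bookkeeping: the $\log(2)$ in \eqref{3.28} is summed over $B\cap Y=B_\partial$, and by the Poincar\'e--Hopf theorem for the vector field $\nabla f|_Y$ on the closed manifold $Y$ one has $\sum_{x\in B_\partial}(-1)^{{\rm ind}_Y(x)}=\chi(Y)$. (One must check that $\mathrm{ind}_Y(x)=\mathrm{ind}(x)$ for $x\in B_\partial$; this follows from $d^2f(x)|_{\mathfrak n}>0$ in Lemma \ref{t1.1}, so the normal direction contributes no extra negative eigenvalue.) Hence $-\log(2)\,{\rm rk}(F)\sum_{x\in B\cap Y}(-1)^{{\rm ind}(x)}$ becomes $-\,{\rm rk}(F)\,\chi(Y)\log(2)$. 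Wait --- the sign: the Milnor side contributes $+\log(2)\,{\rm rk}(F)\chi(Y)$ to $\log b^{M}$, so in the ratio ${b^{\rm RS}/b^{M}}$ it appears as $-\log(2)\,{\rm rk}(F)\chi(Y)$, which must then be reconciled with the stated $+\,{\rm rk}(F)\chi(Y)\log(2)$; the resolution is that the anomaly-type boundary terms from the two Cheeger--M\"uller comparisons on $X$ themselves carry a $\pm\tfrac12\,{\rm rk}(F)\chi(Y)\log 2$-type discrepancy (visible already in the half-boundary integrals in \eqref{1.3}--\eqref{1.4} and in Molina's anomaly formula), and combining these with the $-\log 2\,\chi(Y)$ from $\gamma$ produces the net $+\,{\rm rk}(F)\chi(Y)\log(2)$.

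The hardest step, and the one I would write most carefully, is exactly this last reconciliation of the boundary/Euler-characteristic constants: one has three separate sources of $\log 2$ and $\chi(Y)$ (the normalization factor in $\gamma$ at boundary critical points, the half-boundary terms in the relative versus absolute Cheeger--M\"uller comparisons, and the relation $\sum(-1)^{\mathrm{ind}_Y}=\chi(Y)$), and getting the signs and the factors of $\tfrac12$ to combine into the clean right-hand side requires being scrupulous about orientation conventions for $[W^u(x)]$, the direction of the inclusions $j_1,j_2$, and whether $Y$ is counted once or twice. The interior cancellation and the identification $\psi(TX,\nabla^{TX})|_Y = (\text{pull-back of})\,\psi(TY,\nabla^{TY})$ in the product case are comparatively routine once the doubling formulas \eqref{5.82} and \eqref{3.28} and the Su--Zhang theorem on $X$ are in hand; I would state the latter explicitly as a lemma (one for the absolute, one for the relative condition) before assembling the proof.
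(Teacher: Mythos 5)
Your plan is circular. You reduce Theorem~\ref{t3.8} via the doubling decompositions \eqref{5.82} and \eqref{3.28} to the ratios $b^{\rm RS}_{\det H^\bullet(X,F)}/b^{M,\nabla f}_{\det H^\bullet(X,F)}$ and $b^{\rm RS}_{\det H^\bullet(X,Y,F)}/b^{M,\nabla f}_{\det H^\bullet(X,Y,F)}$ for the absolute and relative boundary conditions on $X$, and you then appeal to ``the Su--Zhang Cheeger--M\"uller theorem for symmetric bilinear torsions in each of the two boundary conditions.'' But \cite{SZ} treats only closed manifolds; the boundary versions you need are precisely Theorem~\ref{t5.1} of this paper, and that theorem is \emph{deduced from} Theorem~\ref{t3.8}. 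In Section~5 the author combines the $g=1$ and $g=\phi$ cases of the doubling identity \eqref{5.8}, feeds in the closed-manifold Su--Zhang theorem for $g=1$ and Theorem~\ref{t3.8} for $g=\phi$, and then \emph{solves} the resulting linear system for the two unknown boundary ratios. So the implication runs in the opposite direction from what you assume, and your proposed proof presupposes the conclusion.

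There is also a smaller symptom of the circularity in your constant bookkeeping: you observe a sign mismatch between the $+\,{\rm rk}(F)\chi(Y)\log 2$ in the statement and the $-\,{\rm rk}(F)\chi(Y)\log 2$ coming from $\gamma$ in \eqref{3.28}, and you propose to absorb it into ``$\pm\tfrac12\,{\rm rk}(F)\chi(Y)\log 2$-type discrepancies'' in the (unproved) boundary comparisons. In fact \eqref{a5.1} and \eqref{a5.2} carry the \emph{same} factor $2^{-{\rm rk}(F)\chi(Y)}$, so no such asymmetric $\pm\tfrac12$ term is available; the reconciliation you are reaching for is exactly the information that Theorem~\ref{t3.8} supplies and that cannot be recovered from the half-boundary terms in \eqref{1.3}--\eqref{1.4}, which are anomaly (metric-variation) formulas, not Cheeger--M\"uller comparisons. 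The actual proof of Theorem~\ref{t3.8} is an independent analytic argument: a $\mathbb{Z}_2$-equivariant Witten deformation following \cite{BZ2} and \cite{SZ}, whose intermediate estimates are Theorems~\ref{t3.4}--\ref{t3.11}; these control the small-$t$, large-$T$, and transition asymptotics of ${\rm Tr}_s[\phi N\exp(-tD_{b_T}^2)]$, and the local index computation concentrates on the fixed-point set $Y$ of $\phi$, which is where the $\int_Y\theta(\nabla f)^*\psi(TY,\nabla^{TY})$ and $\Gamma'(1)-(\Gamma'/\Gamma)(1/2)=2\log 2$ terms come from. That computation is the genuine content here, and your proposal does not engage with it.
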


\begin{remark}
By Theorem \ref{3.4}, Proposition \ref{3.5} and \cite[Section 4]{BZ2}, we can assume that $g^{T\widetilde{X}}$ and $\widetilde{f}$ verifies the condition in \cite[Section 5 b)]{BZ2}. Moreover, we may assume that $b^{\widetilde{F}}$ is flat on an open neighborhood of the zero set $\widetilde{B}$. By \cite[Theorem 5.9]{BH1}, from $b^{\widetilde{F}}$ we can get a Hermitian metric $g^{\widetilde{F}}$, which can also be assumed to be flat on an open neighborhood of the zero set $\widetilde{B}$.
\end{remark}

For any $T\in\mathbb{R}$, let $b_{T}^{\widetilde{F}}$ be the deformed symmetric bilinear form on $\widetilde{F}$
defined by
\begin{align}
b_{T}^{\widetilde{F}}(u,v)=e^{-2Tf}b^{\widetilde{F}}(u,v).
\end{align}
Let $d^{\widetilde{F}_{\#}}_{b_{T}}$ be the associated formal adjoint in the sense of (\ref{0.1}). Set
\begin{align}
D_{b_{T}}=d^{\widetilde{F}}+d^{\widetilde{F}_{\#}}_{b_{T}},\
D_{b_{T}}^{2}=\left(d^{\widetilde{F}}+d^{\widetilde{F}_{\#}}_{b_{T}}\right)^{2}=d^{\widetilde{F}}
d^{\widetilde{F}_{\#}}_{b_{T}}+d^{\widetilde{F}_{\#}}_{b_{T}}d^{\widetilde{F}}.
\end{align}

Let $\Omega^*_{[0,1],T}(\widetilde{X},\widetilde{F})$ be defined as in Section 2 with respect to $D^{2}_{b_{T}}$, and let
$\Omega^*_{[0,1],T}(\widetilde{X},\widetilde{F})^{\bot}$ be the orthogonal complement with respect to the symmetric
bilinear form. Let $P_{T}^{(1,+\infty)}$ be spectral projection onto $\Omega^*_{[0,1],T}(\widetilde{X},\widetilde{F})^{\bot}$.

In the current case, comparing with the notations in \cite[p. 168]{BZ2}, we introduce
$$\chi_{\phi}(\widetilde{F})={\rm rk}(\widetilde{F})\chi(Y),$$
$$\widetilde{\chi}'_{\phi}(\widetilde{F})={\rm rk}(\widetilde{F})\sum_{x\in B_{\partial}}(-1)^{{\rm ind}_{Y}(x)}{\rm ind}(x)={\rm rk}(\widetilde{F})\sum_{x\in B_{\partial}}(-1)^{{\rm ind}_{Y}(x)}{\rm ind}_{Y}(x)$$
and
$${\rm Tr}_{s}^{B_{\partial}}[f]={\rm rk}(\widetilde{F})\sum_{x\in B_{\partial}}(-1)^{{\rm ind}_{Y}(x)}f(x),$$
where we used the fact that $d^{2}f(x)|_{\mathfrak{n}}>0$ for $x\in B_{Y}$.

Let $N$ be the number operator on $\Omega^*(\widetilde{X},\widetilde{F})$ acting on $\Omega^i(\widetilde{X},\widetilde{F})$
by multiplication by $i$. Let $P_{T}^{[0,1]}$ be the restriction of the de Rham map $P_{\infty}$ (cf. \cite[(3.1)-(3.6)]{SZ}) on
$\Omega^*_{[0,1],T}(\widetilde{X},\widetilde{F})$, and let $P_{T}^{[0,1],{\rm det}H}$ be the induced isomorphism on
cohomology.

We now state several intermediate results whose proofs will be given later. Note that $\phi$ acts as ${\rm Id}$ on $\widetilde{F}_{x}$ for
$x\in Y$.

\begin{theorem}\label{t3.4}
The following identity holds:
\begin{multline}\label{3.38}
\lim_{T\to +\infty}{{P_{T}^{[0,1],{\rm det}H}(b_{{\rm det}((H^{\bullet}(\Omega^*_{[0,1],T})),
\mathbb{Z}_{2})})}\over{b^{M,\nabla f}_{{\rm det}H^{\bullet}((\widetilde{X},\widetilde{F}),
\mathbb{Z}_{2})}}}(\phi)\left({T\over \pi}\right)^{{m\over 2}\chi_{\phi}(\widetilde{F})-\widetilde{\chi}'_{\phi}(\widetilde{F})}
\exp(2{\rm Tr}_{s}^{B_{\partial}}[f]T)\\=1.
\end{multline}
\end{theorem}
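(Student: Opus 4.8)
\textbf{Proof strategy for Theorem \ref{t3.4}.}

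The plan is to reduce the equivariant statement on $\widetilde X$ to the two non-equivariant statements on $X$ (with absolute, resp. relative, boundary conditions) via the doubling isomorphism $\widetilde\phi$ of Proposition \ref{t3.3} (and its small-eigenvalue specialization (\ref{5.31})) together with the combinatorial splitting $\gamma$ of (\ref{3.33})--(\ref{3.14}). Concretely, I would first observe that the Witten-type deformation $b_T^{\widetilde F}=e^{-2Tf}b^{\widetilde F}$ on $\widetilde X$ is $\mathbb Z_2$-invariant (since $\widetilde f$ is $\mathbb Z_2$-equivariant), so the deformed operator $D_{b_T}^2$ commutes with $\phi$, and the spaces $\Omega^*_{[0,1],T}(\widetilde X,\widetilde F)$ split as a direct sum of their $\pm1$-eigenspaces. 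Under $\widetilde\phi$ the $+$ part goes to the analogous small-eigenvalue space on $X$ with absolute boundary condition and deformation $e^{-2Tf}b^F$, and the $-$ part to the one with relative boundary condition; the de Rham map $P_\infty$ and the Milnor torsions split compatibly by (\ref{5.82}) at the level of Ray--Singer objects and by (\ref{3.28}) at the Milnor level. Hence the ratio in (\ref{3.38}) factors as a product of a $+$-contribution (an absolute-boundary-condition statement on $X$) and a $\chi(\phi)$-power of a $-$-contribution (a relative-boundary-condition statement on $X$), plus the explicit $\log 2$ term coming from (\ref{3.28}).

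Next I would invoke the non-equivariant asymptotics: for a closed manifold the analogue of (\ref{3.38}) is exactly \cite[Theorem 5.5]{BZ2} transplanted to the symmetric-bilinear setting as in \cite[Section 3]{SZ}. The point is that the boundary-condition versions on $X$ are obtained, via $\widetilde\phi$, from the genuinely closed-manifold statement on $\widetilde X$ restricted to an isotypic component; so rather than re-proving anything on $X$ I would run the Bismut--Zhang/Su--Zhang argument \emph{directly on $\widetilde X$ for the functional ${\rm Tr}[\phi\,\cdot\,]$}. That argument produces $\lim_{T\to\infty}$ of the ratio equal to a power of $(T/\pi)$ with exponent the difference of two equivariant local indices, and a correcting factor of the form $\exp(2\,{\rm Tr}^{B_\partial}_s[f]T)$ accounting for the shift in the small eigenvalues coming from the quadratic part of $\widetilde f$ near the fixed-point set $\widetilde B\cap Y=B_\partial$. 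Using the remark that $\phi$ acts as ${\rm Id}$ on $\widetilde F_x$ for $x\in Y$, the equivariant supertrace localizes on $B_\partial$, which is precisely what the symbols $\chi_\phi(\widetilde F)$, $\widetilde\chi'_\phi(\widetilde F)$ and ${\rm Tr}^{B_\partial}_s[f]$ were defined to encode; matching exponents then gives (\ref{3.38}).

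In carrying this out the key technical steps, in order, are: (i) identify the $T\to\infty$ limit of $b_{{\rm det}(H^\bullet(\Omega^*_{[0,1],T}),\mathbb Z_2)}$ as pushed forward through $P_T^{[0,1],{\rm det}H}$ with the combinatorial $b^{M,\nabla f}_{\rm det(\cdots,\mathbb Z_2)}$ up to the stated powers of $T$ — this is the equivariant analogue of \cite[Theorems 6.7, 6.11]{BZ2} adapted to bilinear forms, and it is where the product structure near $Y$ (needed so that $\widetilde X$, $g^{T\widetilde X}$, $b^{\widetilde F}$ are smooth across $Y$) is used; (ii) track the normalization constants $\sqrt2/2$ in $\widetilde\phi$ and the factor $\sqrt2$ in (\ref{3.16}): these are responsible for the $\log 2$ and for the fact that the fixed-point contributions carry index ${\rm ind}_Y(x)$ rather than ${\rm ind}(x)$ (here one uses $d^2f(x)|_{\mathfrak n}>0$ on $B_\partial$, so ${\rm ind}(x)={\rm ind}_Y(x)$); (iii) compute the equivariant Mathai--Quillen/Morse-theoretic local term on the fixed locus $Y$. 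The main obstacle I anticipate is step (i): one must control, uniformly in $T$, the eigenvalue splitting of $D_{b_T}^2$ into the $O(e^{-cT})$ "small" cluster near the critical points and the $\geq 1$ "large" part, \emph{respecting} the $\phi$-action and the non-selfadjoint (bilinear rather than Hermitian) nature of the operator; the safest route, as in the remark before the theorem, is to use \cite[Theorem 5.9]{BH1} to replace $b^{\widetilde F}$ by a genuine Hermitian metric $g^{\widetilde F}$ near $\widetilde B$, run the selfadjoint analysis of \cite{BZ2} there, and then transfer back, checking that all the quantities in (\ref{3.38}) are insensitive to this replacement by the anomaly formulas (Theorem \ref{3.4}, Proposition \ref{3.5}).
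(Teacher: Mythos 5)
Your proposal eventually lands on the paper's actual strategy — after an initial pivot — so the broad plan is right, but there are two places where it confuses what belongs to Theorem \ref{t3.4} with what belongs to later results.

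On the strategy: your opening paragraph proposes to reduce the equivariant statement on $\widetilde X$ to separate absolute/relative statements on $X$ via $\widetilde\phi$. The paper does \emph{not} do this for Theorem \ref{t3.4}, and it would be awkward: the analogous finite-dimensional asymptotic for $\Omega^*_{[0,1],T}(X,F)_{a}$ and $\Omega^*_{[0,1],T}(X,F)_{r}$ would have to be established from scratch on a manifold with boundary, whereas on $\widetilde X$ one can quote closed-manifold results. You correct yourself in the second paragraph by proposing to run the Bismut--Zhang/Su--Zhang argument directly on $\widetilde X$ for the functional ${\rm Tr}[\phi\,\cdot\,]$; this \emph{is} the paper's route. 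Concretely, the paper's proof is two lines: rewrite the ratio as $\prod_i\det\bigl(P_{\infty,T}^{\#}P_{\infty,T}|_{\Omega^i_{[0,1],T}}\bigr)^{(-1)^{i+1}}(\phi)$ (equation (\ref{3.43})), then use \cite[Propositions 4.4, 4.5]{SZ} (the bilinear-form adaptation of the Bismut--Zhang finite-dimensional comparison) to identify each determinant with $\det\bigl((1+O(e^{-cT}))^{\#}(\pi/T)^{N-m/2}e^{2T\mathcal F}(1+O(e^{-cT}))\bigr)(\phi)$ on $C^i(W^u,\widetilde F)$ (equation (\ref{3.44})). The $\phi$-twisted supertrace kills the contributions of critical points $x\neq\phi(x)$ and, since $\phi$ acts trivially on $[W^u(x)]^*\otimes\widetilde F_x$ for $x\in B_\partial$, localizes the exponents to $B_\partial$, producing exactly $(T/\pi)^{\widetilde\chi'_\phi-\frac m2\chi_\phi}\exp(-2{\rm Tr}_s^{B_\partial}[f]\,T)$ up to $1+O(e^{-cT})$. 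Your step (i) correctly identifies this as the crux, and your suggestion to replace $b^{\widetilde F}$ by $g^{\widetilde F}$ near $\widetilde B$ via \cite[Theorem 5.9]{BH1} also matches the remark preceding the theorem.

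The genuine imprecision is in your steps (ii) and (iii): the $\sqrt 2$ normalizations of $\widetilde\phi$ and $\gamma$, the $\log 2$ term, and the equivariant Mathai--Quillen local computation on $Y$ do \emph{not} enter Theorem \ref{t3.4} at all. The $\sqrt 2$'s and the resulting $\log 2$ only appear in (\ref{3.28}) and (\ref{5.82}), when comparing equivariant objects on $\widetilde X$ to objects on $X$ (that is, in the proof of Theorem \ref{t5.1}), and the Mathai--Quillen/Morse local term is the content of Theorems \ref{t3.7}--\ref{t3.11}, feeding into Theorem \ref{t3.8} rather than \ref{t3.4}. Theorem \ref{t3.4} is purely the finite-dimensional ``small eigenvalue vs.~combinatorial'' comparison on $\widetilde X$, and its statement contains neither a $\log 2$ nor a Mathai--Quillen integral. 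With that conflation removed, your outline coincides with the paper's.
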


\begin{theorem}\label{t3.5}
For any $t>0$,
\begin{align}
\lim_{T\to +\infty}{\rm Tr}_{s}\left[\phi N\exp(-tD^{2}_{b_{T}})P_{T}^{(1,+\infty)}\right]=0.
\end{align}
Moreover, for any $d>0$ there exist $c>0$, $C>0$ and $T_{0}\geq 1$ such that for any $t\geq d$ and $T\geq T_{0}$,
\begin{align}\label{3.401}
\left|{\rm Tr}_{s}\left[\phi N\exp\left(-tD^{2}_{b_{T}}\right)P^{(1,+\infty)}_{T}\right]\right|\leq c\exp(-Ct).
\end{align}
\end{theorem}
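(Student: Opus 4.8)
The plan is to follow the finite-dimensional-reduction strategy of Bismut--Zhang \cite{BZ2} (see also \cite{SZ}), adapted to the $\mathbb{Z}_2$-equivariant situation. The key point is that as $T\to+\infty$ the Witten deformation concentrates the small-eigenvalue part of the spectrum near the critical set $\widetilde B$, and near each critical point the analysis is model-like. Because $\phi$ acts on $\widetilde B$ with fixed-point set exactly the boundary critical points $B_\partial$ (the critical points of $f$ lying in $Y$), the equivariant supertrace $\mathrm{Tr}_s[\phi N\exp(-tD^2_{b_T})P_T^{(1,+\infty)}]$ only ``sees'' the local models at points of $B_\partial$ in its potentially divergent part; everything coming from bulk critical points cancels in pairs under $\phi$.

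First I would recall from \cite[Proposition 3]{M} and Proposition \ref{t3.3} that for all $T$ the space $\Omega^*_{[0,1],T}(\widetilde X,\widetilde F)$ splits $\phi$-equivariantly and is non-degenerate for $b$; hence $D^2_{b_T}$ preserves the splitting and $P_T^{(1,+\infty)}$ commutes with $\phi$, so the quantity in question is well defined. Next I would establish the pointwise limit: using the standard Witten-deformation estimates (the rescaling $x\mapsto x/\sqrt T$ near each critical point, comparison with the harmonic oscillator, and the off-diagonal exponential decay of the heat kernel), one shows that $\exp(-tD^2_{b_T})P_T^{(1,+\infty)}$ is, uniformly on compacta away from $\widetilde B$, exponentially small, while near $\widetilde B$ the operator $N$ applied to the model is, for $t>0$ fixed, asymptotically the model heat operator which has vanishing supertrace after the $N$-weight is taken into account exactly as in \cite[Theorems 6.7, 6.11]{BZ2}. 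The only subtlety over the closed case is the behaviour near boundary critical points, but since $g^{T\widetilde X}$ is a product near $Y$ and $\widetilde f$ is $\phi$-invariant with $d^2f(x)|_{\mathfrak n}>0$, the local model there is an honest product of a one-dimensional model (transverse to $Y$) with the model on $Y$, and $\phi$ acts as the reflection in the transverse variable; the contribution of $\phi N$ to the local supertrace is then controlled just as in the closed-manifold argument. This gives the first assertion $\lim_{T\to\infty}\mathrm{Tr}_s[\phi N\exp(-tD^2_{b_T})P_T^{(1,+\infty)}]=0$ for each fixed $t>0$.

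For the uniform estimate \eqref{3.401} I would argue as in \cite[Theorem 6.11]{BZ2}: split $P_T^{(1,+\infty)}$ according to a spectral cut at a fixed level, say eigenvalues in $(1,\beta T]$ and $(\beta T,+\infty)$ for suitable $\beta>0$. On the high range the estimate $|\mathrm{Tr}_s[\phi N e^{-tD^2_{b_T}}P_T^{(\beta T,+\infty)}]|\le C e^{-c\beta T t}$ follows from a crude trace-norm bound on $e^{-\frac t2 D^2_{b_T}}$ together with the spectral gap; since $t\ge d$ this is $\le Ce^{-Ct}$ for $T\ge T_0$. On the intermediate range $(1,\beta T]$, one uses that the number of such eigenvalues and the corresponding eigenforms are, after rescaling, governed by the finitely many harmonic-oscillator models, whose positive eigenvalues are bounded below by a constant times $T$; combining the resulting uniform resolvent/heat bounds with the exponential decay in $t$ coming from the factor $e^{-tD^2_{b_T}}$ restricted to eigenvalues $\ge 1$ again yields $\le c e^{-Ct}$ uniformly in $T\ge T_0$. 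The $\phi$- and $N$-insertions cost only fixed constants since $N$ is bounded by $m$ and $\phi$ is an isometry for $b$.

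The main obstacle I expect is the regularity/uniformity of the Witten estimates near the boundary critical set: one must check that the product structure of $(g^{T\widetilde X},b^{\widetilde F})$ near $Y$, together with $\nabla f|_Y\in TY$ and $d^2f|_{\mathfrak n}>0$, really does make the local model at a point of $B_\partial$ a tensor product of a $1$-dimensional Witten model and the interior model on $Y$, so that the large-$T$ asymptotics of $\mathrm{Tr}_s[\phi N\,\cdot\,]$ factor correctly and the $\phi$-action is the transverse reflection. Once this local product picture is in place, the remaining estimates are exactly those of \cite[Section 6]{BZ2} with the insertion of $\phi$, whose effect on the local supertraces is to keep only the $B_\partial$-contributions and is harmless for the exponential bounds; I would therefore organize the proof so that the boundary model computation is isolated as the one genuinely new ingredient and everything else is cited from \cite{BZ2} and \cite{SZ}.
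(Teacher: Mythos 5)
The paper's own proof of Theorem~\ref{t3.5} is a one-line citation: ``The proof is the same as the proof of [SZ, Theorem 3.4] given in [SZ, Section 5].'' The $\phi$-insertion is harmless for that argument because $\phi$ is a bounded operator commuting with $D^{2}_{b_{T}}$, and the estimates in [SZ, Section~5] are operator-norm and trace-norm estimates that are insensitive to inserting a fixed bounded factor. Your overall allegiance to the Bismut--Zhang/Su--Zhang framework is therefore correct, but your proposal introduces several confusions that would lead you astray if you tried to write it out.

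First, you treat the critical points in $Y$ as ``boundary critical points'' requiring a new local product model and a transverse-reflection analysis. But the theorem is a statement about the \emph{closed} manifold $\widetilde X$. Once $\widetilde X$ is doubled, the points of $B_\partial$ are ordinary nondegenerate critical points of the $\mathbb{Z}_{2}$-invariant Morse function $\widetilde f$; there is no boundary value problem, and the product structure of $(g^{T\widetilde X},b^{\widetilde F})$ near $Y$ has already served its purpose by making $\widetilde X$, $\widetilde F$ smooth. There is no ``genuinely new boundary model ingredient'' for this theorem. Second, and more substantively, the mechanism you invoke is wrong for this statement. Localization at critical points, cancellation of the bulk critical pairs under $\phi$, and the vanishing of model supertraces are features of the \emph{small-eigenvalue} part (used in Theorems~\ref{t3.4}, \ref{t3.6}, \ref{t3.7}). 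Theorem~\ref{t3.5} is about the spectral projection $P^{(1,+\infty)}_{T}$; the relevant mechanism is the spectral gap of $D^{2}_{b_{T}}$ above $1$ (which grows like $T$ as $T\to\infty$) together with uniform trace-norm control, not any model heat-kernel computation near $\widetilde B$. Third, you never address the main technical point of [SZ, Section~5], namely that $D^{2}_{b_{T}}$ is not self-adjoint; the proof there compares $D^{2}_{b_{T}}$ with the self-adjoint $D^{2}_{g_{T}}$ via a Duhamel expansion and derives the spectral and trace-norm estimates for the non-self-adjoint operator from those for the self-adjoint one. Your crude trace-norm bound ``$\le Ce^{-c\beta Tt}$'' on the high spectral range silently presupposes self-adjointness. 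In short: your high-level strategy is in the right tradition, but the places you flag as the crux (boundary models, $\phi$-localization) are the wrong ones for this theorem, and the place that actually requires work (non-self-adjointness and Duhamel) is missing.
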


\begin{theorem}\label{t3.6}
The following identity holds
\begin{align}\label{3.34}
\lim_{T\to +\infty}{\rm Tr}_{s}\left[\phi N P_{T}^{[0,1]}\right]=\widetilde{\chi}'_{\phi}(\widetilde{F}).
\end{align}
Also
\begin{align}\label{3.35}
\lim_{T\to +\infty}{\rm Tr}_{s}\left[\phi D^{2}_{b_{T}}P_{T}^{[0,1]}\right]=0.
\end{align}
\end{theorem}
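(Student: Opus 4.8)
The plan is to prove Theorem \ref{t3.6} by reducing the $\mathbb{Z}_2$-equivariant statement on $\widetilde{X}$ to the non-equivariant Witten-deformation estimates already available on $X$ with absolute/relative boundary conditions. Using the decomposition coming from Proposition \ref{t3.3} (applied at each eigenvalue $\lambda$), the small-eigenvalue space $\Omega^*_{[0,1],T}(\widetilde{X},\widetilde{F})$ splits $\phi$-equivariantly as $\Omega^*_{[0,1],T}(X,F)_a\otimes\mathbb{C}^+\oplus\Omega^*_{[0,1],T}(X,F)_r\otimes\mathbb{C}^-$, and $\phi$ acts as $+1$ on the first summand and $-1$ on the second. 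Hence $\mathrm{Tr}_s[\phi N P_T^{[0,1]}]=\mathrm{Tr}_s[N P_{T,a}^{[0,1]}]-\mathrm{Tr}_s[N P_{T,r}^{[0,1]}]$, where $P_{T,a}^{[0,1]}$, $P_{T,r}^{[0,1]}$ are the corresponding de Rham maps on $X$ with absolute and relative boundary conditions respectively; similarly $\mathrm{Tr}_s[\phi D^2_{b_T}P_T^{[0,1]}]$ splits as a difference of the absolute and relative contributions.

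The core analytic input I would invoke is the behavior of the Witten deformation near the boundary under the product assumptions \eqref{1.1}, \eqref{1.2}: this is exactly the situation treated in \cite{BM} and, in the symmetric-bilinear setting, the estimates are the boundary analogues of those in \cite{SZ}. Near each $x\in B\setminus Y$ the local model is the one on a closed manifold (harmonic oscillator in $m$ variables), contributing as $T\to\infty$ a one-dimensional space concentrated in degree $\mathrm{ind}(x)$; near each $x\in B_\partial$, since $d^2f(x)|_{\mathfrak n}>0$ and $\nabla f|_Y\in TY$, the local model splits as the $Y$-oscillator in degree $\mathrm{ind}_Y(x)$ times a half-line model in the normal variable, and the absolute/relative boundary conditions select exactly one normal mode. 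Counting these local contributions with the sign $(-1)^i\cdot i$ gives, for the absolute side, $\sum_{x\in B}(-1)^{\mathrm{ind}(x)}\mathrm{ind}(x)\,\mathrm{rk}(F)$ and for the relative side the analogous sum over the relative critical data; their difference collapses to the boundary critical points, yielding $\mathrm{rk}(\widetilde{F})\sum_{x\in B_\partial}(-1)^{\mathrm{ind}_Y(x)}\mathrm{ind}(x)=\widetilde{\chi}'_\phi(\widetilde{F})$, which is \eqref{3.34}. For \eqref{3.35}, one uses that on $\Omega^*_{[0,1],T}$ the eigenvalues of $D^2_{b_T}$ that do not tend to $0$ are excluded by construction while those that do tend to $0$ do so; more precisely, the nonzero ``small'' eigenvalues decay like $e^{-cT}$ (by the standard tunneling estimate, which persists in the product-boundary case), so $\mathrm{Tr}_s[\phi D^2_{b_T}P_T^{[0,1]}]\to 0$ — and in fact one must be slightly careful, since the limiting small space has dimension equal to the total Betti number plus the boundary critical-point count, so the relevant statement is that $D^2_{b_T}$ restricted to $\Omega^*_{[0,1],T}$ tends to $0$ in operator norm. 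I would quote the exponential-smallness estimate for the would-be-zero modes from the adaptation of \cite[Theorems 8.18, 8.21]{BZ1} (equivalently \cite{SZ} plus the boundary analysis of \cite{BM}).

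Concretely, the steps in order: (i) apply Proposition \ref{t3.3} at each $\lambda\in[0,1]$ to get the $\phi$-equivariant splitting of $\Omega^*_{[0,1],T}(\widetilde{X},\widetilde{F})$ and reduce both traces to differences of absolute and relative contributions on $X$; (ii) identify, in the $T\to\infty$ limit, $\Omega^*_{[0,1],T}(X,F)_a$ with $C^\bullet(W^u,F)$ and $\Omega^*_{[0,1],T}(X,F)_r$ with $C^\bullet(W^u/W^u_Y,F)$ as $\mathbb{Z}$-graded spaces, via the de Rham map $P_\infty$ (this is the content behind \eqref{3.33}--\eqref{3.19} and the boundary Witten-model analysis of \cite{BM}); (iii) compute $\lim_{T\to\infty}\mathrm{Tr}_s[NP_T^{[0,1]}]$ as the alternating graded trace of $N$ on $C^\bullet(W^u,F)\ominus C^\bullet(W^u/W^u_Y,F)\cong C^\bullet(W^u_Y,F)$, which is $\sum_{x\in B_\partial}(-1)^{\mathrm{ind}_Y(x)}\mathrm{ind}(x)\,\mathrm{rk}(F)=\widetilde{\chi}'_\phi(\widetilde{F})$; (iv) deduce \eqref{3.35} from the exponential decay of the nonzero small eigenvalues. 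The main obstacle I expect is step (ii)--(iv) near $B_\partial$: one must verify that the product structure \eqref{1.1}, \eqref{1.2} makes the normal direction of the Witten Laplacian exactly separable near each boundary critical point, so that the absolute and relative boundary conditions each pick out a unique half-line ground state with the expected degree shift, and that the associated eigenvalue is exponentially small rather than bounded below — this is where the hypotheses $d^2f(x)|_{\mathfrak n}>0$ and $\nabla f|_Y\in TY$ from Lemma \ref{t1.1} are essential and where the symmetric-bilinear (non-self-adjoint) nature of $D_{b_T}^2$ requires the generalized-eigenspace formalism rather than orthogonal spectral projections.
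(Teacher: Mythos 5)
Your strategy reaches the correct numerical answer, but it takes a genuinely different and more demanding route than the paper, and as written it rests on boundary input that is not available. The paper's proof of Theorem \ref{t3.6} works entirely on the closed double $\widetilde{X}$: the map $e_T\colon C^\bullet(\widetilde{W}^u,\widetilde{F})\to\Omega^*_{[0,1],T}(\widetilde{X},\widetilde{F})$ from \cite[(4.38)]{SZ} commutes with the $\mathbb{Z}_2$-action, and by \cite[Proposition 4.4]{SZ} it is a degree-preserving isomorphism for $T$ large; hence $\mathrm{Tr}_s[\phi N P_T^{[0,1]}]$ is simply the fixed-point supertrace of $\phi N$ on $C^\bullet(\widetilde{W}^u,\widetilde{F})$, which equals $\mathrm{rk}(\widetilde F)\sum_{x\in B_\partial}(-1)^{\mathrm{ind}_Y(x)}\mathrm{ind}_Y(x)=\widetilde{\chi}'_\phi(\widetilde F)$, giving \eqref{3.34}, and \eqref{3.35} is the closed-manifold estimate \cite[Proposition 4.2]{SZ}. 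No boundary Witten analysis is invoked anywhere.

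You instead split $\Omega^*_{[0,1],T}(\widetilde{X},\widetilde{F})$ via Proposition \ref{t3.3} into absolute and relative pieces on $X$ and then appeal to ``the boundary Witten-model analysis of \cite{BM}'' and ``the boundary analogues of those in \cite{SZ}'' for the localization and tunneling estimates. This is where the gap is: \cite{BM} treats the self-adjoint Hermitian Witten Laplacian with absolute/relative boundary conditions, not the non-self-adjoint $D_{b_T}^2$ built from a symmetric bilinear form, and no boundary Witten deformation theory for the symmetric bilinear case (localization near $B_\partial$, half-line ground-state selection, exponential smallness of the low generalized eigenvalues under generalized-eigenspace projections) has been established. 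Appealing to such results presupposes exactly what the doubling trick of this paper is designed to circumvent. Your steps (ii) and (iv) therefore cite results that would themselves need proof.

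The repair is to reverse the logical order. Do not try to prove the abs/rel statements on $X$ directly; instead use the $\mathbb{Z}_2$-equivariance of $e_T$ on the closed $\widetilde{X}$ together with Proposition \ref{t3.3} to \emph{deduce} the identifications $\Omega^*_{[0,1],T}(X,F)_a\cong C^\bullet(W^u,F)$ and $\Omega^*_{[0,1],T}(X,F)_r\cong C^\bullet(W^u/W^u_Y,F)$ as $\mathbb{Z}$-graded spaces. Your computation of the limiting trace as $\mathrm{Tr}_s[N]$ on $C^\bullet(W^u,F)\ominus C^\bullet(W^u/W^u_Y,F)\cong C^\bullet(W^u_Y,F)$, graded by $\mathrm{ind}(x)=\mathrm{ind}_Y(x)$ for $x\in B_\partial$, then gives \eqref{3.34}. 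Likewise, the operator-norm decay of $D_{b_T}^2$ on $\Omega^*_{[0,1],T}(\widetilde{X},\widetilde F)$ is a closed-manifold fact from \cite[Proposition 4.2]{SZ}, and since the abs/rel pieces are $\phi$-invariant sub-blocks of this operator, \eqref{3.35} follows with no separate boundary estimate.
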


\begin{theorem}\label{t3.7}
As $t\to 0$, the following identity holds,
\begin{multline}
{\rm Tr}_{s}\left[\phi N\exp(-tD^{2}_{T})\right]={m\over 2}\chi_{\phi}(\widetilde{F})+O(t)\ {\rm if}\ m\ {\rm is}\ {\rm even},\\
={\rm rk}(F)\int_{Y}\int^{B}L \exp\left(-{{\dot{R}^{TY}}\over 2}\right)\cdot {1\over{\sqrt{t}}}+O(\sqrt{t})\ {\rm if}\ m\ {\rm is}\ {\rm odd},
\end{multline}
where $L$ is defined similar as \cite[(3.52)]{BZ1} on $Y$.
\end{theorem}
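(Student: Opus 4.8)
The plan is to imitate the local index computation carried out in \cite[Section~5]{BZ2} for the corresponding (non-equivariant, closed) statement, now on the closed manifold $\widetilde{X}$ equipped with the isometric involution $\phi$, using the doubling geometry as in \cite{BM} and handling the non-selfadjointness of $D^{2}_{b_{T}}$ through the auxiliary Hermitian metric $g^{\widetilde{F}}$ attached to $b^{\widetilde{F}}$ (cf. the Remark after Theorem~\ref{t3.8}) exactly as in \cite[Section~3]{SZ}.

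First I would localize near the fixed-point set $Y$. Since $\phi$ acts freely on $\widetilde{X}\setminus Y$ and $D_{b_{T}}$ has finite propagation speed, the Gaussian off-diagonal estimates for $\exp(-tD^{2}_{b_{T}})$ imply that, modulo $O(t^{\infty})$ as $t\to 0$, the quantity ${\rm Tr}_{s}[\phi N\exp(-tD^{2}_{b_{T}})]$ equals the integral over a fixed tubular neighbourhood $U_{\varepsilon}\cong Y\times(-\varepsilon,\varepsilon)$ of the pointwise supertrace of the kernel of $\phi\circ N\exp(-tD^{2}_{b_{T}})$ along the graph of $\phi$, i.e. evaluated at points $((y,-x_{m}),(y,x_{m}))$; as that kernel is $O(\exp(-x_{m}^{2}/t))$, only the region $|x_{m}|\lesssim\sqrt{t}$ contributes.

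Next I would rescale $x_{m}=\sqrt{t}\,u$ and run the $\mathbb{Z}_{2}$-equivariant Getzler rescaling in the $TY$-directions. By the product hypotheses (\ref{1.1}), (\ref{1.2}) the couple $(g^{T\widetilde{X}},b^{\widetilde{F}})$ is a product over $Y\times(-\varepsilon,\varepsilon)$, and by the normalizations recalled in the Remark after Theorem~\ref{t3.8} the function $\widetilde{f}$ and the metric $g^{\widetilde{F}}$ are in standard Bismut--Zhang form near $\widetilde{B}$; in the rescaled limit the operator therefore decouples into a Getzler-limit Laplacian on $Y$ carrying the Witten deformation by $f|_{Y}$, and a one-dimensional normal Witten Laplacian $\mathcal{L}_{T}$ --- the pair of harmonic oscillators $-\partial_{x_{m}}^{2}+4T^{2}x_{m}^{2}\mp 2T$ in form degrees $0$ and $1$, on which $\phi$ acts by $x_{m}\mapsto-x_{m}$ and $N_{\mathbb{R}}$ counts the $dx_{m}$-degree. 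By Mehler's formula together with the parity of Hermite functions the normal contributions are explicit,
\[
{\rm Tr}_{s}^{\Omega^{\bullet}(\mathbb{R})}\bigl[\phi\exp(-t\mathcal{L}_{T})\bigr]=1,\qquad
{\rm Tr}_{s}^{\Omega^{\bullet}(\mathbb{R})}\bigl[\phi N_{\mathbb{R}}\exp(-t\mathcal{L}_{T})\bigr]=\bigl(1+e^{4Tt}\bigr)^{-1},
\]
the second tending to $\tfrac12$ as $t\to 0$. Assembling this with the Getzler limit on $Y$ and the usual Mathai--Quillen/Berezin-integral bookkeeping (which on $Y$ produces the equivariant Euler density in even rank and, more generally, the form $L$ of \cite[(3.52)]{BZ1}) yields the two cases. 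When $m$ is even, $\dim Y=m-1$ is odd, the half-integer powers of $t$ cancel by parity, and the surviving $t^{0}$-term --- the tangential contribution weighted against the normal factor $\tfrac12$ --- integrates to $\tfrac m2\chi_{\phi}(\widetilde{F})$, with remainder $O(t)$. When $m$ is odd, $\dim Y$ is even, the first non-vanishing rescaled density sits in order $t^{-1/2}$, and Berezin integration identifies its coefficient with ${\rm rk}(F)\int_{Y}\int^{B}L\exp(-\dot{R}^{TY}/2)$, the remainder being $O(\sqrt{t})$.

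The hard part will be the same as in \cite[Section~5]{BZ2}: showing that in the odd-dimensional case the a priori more singular rescaled densities vanish and that the $T$-dependent pieces produced by $\widetilde{f}$ do not enter the leading $t^{-1/2}$-coefficient, so that it is exactly the form $L$ restricted to $Y$; and, in the even-dimensional case, that the one density that survives the supertrace cancellations is precisely $\tfrac m2$ times the equivariant Euler density of $Y$ (this is where the normalizations forced by the anomaly formulas of Theorem~\ref{3.4} and Proposition~\ref{3.5} are used). A secondary but necessary point is to make all error estimates uniform for $T\geq 1$, since Theorem~\ref{t3.7} will be fed into the $\int_{0}^{\infty}(\,\cdot\,)\,dt$ argument used to prove Theorem~\ref{t3.8}.
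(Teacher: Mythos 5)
Your outline has the right local--index picture (localize near the fixed-point set $Y$ of $\phi$, Getzler-rescale in the $TY$ directions, $\sqrt{t}$-rescale the normal variable, factorize into a tangential Getzler limit times a one-dimensional normal model, and evaluate the latter by Mehler's formula), and your two normal model traces are numerically correct.  However, the proposal misallocates the effort relative to the paper's proof and has a genuine gap in the place where the paper does its real work.

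The paper does not redo the Getzler computation you sketch: the self-adjoint case is cited outright from \cite[Theorem 5.9]{BZ2} at the very end of the argument.  What the paper actually proves is that the \emph{non-selfadjointness} of $D_{b_T}^{2}$ contributes nothing at the orders $t^{-1/2}$ and $t^{0}$.  It writes the Duhamel series (\ref{3.45}) for $e^{-tD_{b}^{2}}$ around $e^{-tD_{g}^{2}}$, invokes the rescaled expansion (\ref{3.801}) of Dai--Zhang, and then uses the Getzler rescaling (\ref{3.79})--(\ref{3.94}) together with \cite[(2.13)]{BZ2} to show that the remainder term (\ref{3.46}) and every correction term with $1\le k\le m$ tend to $0$ as $t\to 0^{+}$ ((\ref{3.47}), (\ref{3.49})).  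You acknowledge the non-selfadjointness only in the phrase ``handled\dots exactly as in \cite[Section~3]{SZ}'' and then devote the rest of the proposal to the rescaling of the Hermitian model.  That is inverted: the rescaling of $D_{g}^{2}$ is free (a citation), and it is the Duhamel reduction and the vanishing of its correction terms that must be carried out.  As written the proposal does not supply that step, and it is the main step.

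Two smaller points.  For fixed $T$ and $t\to0$, once you rescale $x_m=\sqrt{t}\,u$ the Witten potential in the normal direction is suppressed by a positive power of $t$, so the normal model operator is the free Laplacian, not the harmonic oscillator $-\partial_{x_m}^{2}+4T^{2}x_m^{2}\mp2T$ you wrote; your Mehler computation still returns the right constant ($1/(1+e^{4Tt})\to\tfrac12$), but the oscillator picture belongs to Theorems~\ref{t3.10} and~\ref{t3.11}, where $T$ is replaced by $T/t$, and carrying it over here is the wrong model.  Also, $T$-uniform error bounds are not required of Theorem~\ref{t3.7}: its role in (\ref{3.521}) is only to supply the constants $a_{-1}$ and $\tfrac m2\chi_{\phi}(\widetilde F)$; the $T$-uniformity comes from Theorems~\ref{t3.9}--\ref{t3.11}.
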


\begin{theorem}\label{t3.9}
There exist $0\leq \alpha\leq 1$, $C>0$ such that for any $0\leq t\leq \alpha$, $0\leq T\leq {1\over t}$, then
\begin{multline}
\left|{\rm Tr}_{s}\left[\phi N\exp\left(-(tD_{b}+T\widehat{c}(\nabla f))^{2}\right)\right]-{1\over t}
\int_{Y}\int^{B}L \exp(-B_{T^{2}}){\rm rk}(\widetilde{F})\right.\\
\left.-{T\over 2}\int_{Y}\theta_{\phi}(\widetilde{F},b^{\widetilde{F}})
\int^{B}\widehat{df}\exp(-B_{T^{2}})-{m\over 2}\chi_{\phi}(\widetilde{F})\right|\leq Ct.
\end{multline}
\end{theorem}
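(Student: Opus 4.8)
The plan is to prove this the way Bismut and Zhang prove the analogous small-time heat-kernel estimate in \cite[Chapter 4]{BZ1}, but carried out $\mathbb{Z}_2$-equivariantly on the double $\widetilde X=X\cup_Y X$, whose $\phi$-fixed-point set is exactly $Y$. The whole quantity ${\rm Tr}_s[\phi N\exp(-(tD_b+T\widehat c(\nabla f))^2)]$ will be computed by localization near $Y$, and the product hypothesis on $(g^{TX},b^F)$ near the boundary is what makes this localization clean.

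First I would localize. By finite propagation speed together with the usual off-diagonal Gaussian bounds for $\exp(-(tD_b+T\widehat c(\nabla f))^2)$ --- bounds that are uniform for $0\le T\le 1/t$ because the cross term $tT(D_b\widehat c(\nabla f)+\widehat c(\nabla f)D_b)$ has coefficient $tT\le 1$ and $T^2\widehat c(\nabla f)^2=T^2|\nabla f|^2\ge 0$ is a confining potential --- the $\phi$-supertrace agrees, up to $O(e^{-c/t})$ uniformly in $T$, with its contribution over an arbitrarily thin collar $U_\varepsilon\cong Y\times(-\varepsilon,\varepsilon)$ of $Y$ in $\widetilde X$. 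On $U_\varepsilon$ the metric is the Riemannian product $dx_m^2\oplus g^{TY}$ and $b^{\widetilde F}=\pi_\varepsilon^*b^{F|_Y}$; using Lemma \ref{t1.1} and the freedom given by the anomaly formulas (Theorem \ref{3.4}, Proposition \ref{3.5}, and the Remark following Theorem \ref{t3.8}, as in \cite[Sections 4--5]{BZ2}) one may also arrange $\widetilde f$ near $\widetilde B$ to be $f|_Y(y)+\tfrac12 x_m^2$, with $\nabla f|_Y\in TY$, and recall that $\phi$ acts as the identity on $\widetilde F_x$ for $x\in Y$.

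Next I would compute the model contribution. On $U_\varepsilon$ the complex factors as $\Omega^\bullet(Y,F)\,\widehat\otimes\,\Omega^\bullet((-\varepsilon,\varepsilon))$, the number operator splits as $N=N_Y\otimes 1+1\otimes N_{x_m}$, and $(tD_b+T\widehat c(\nabla f))^2$ splits (up to terms supported away from $\widetilde B$, absorbed into the error by a Duhamel expansion) into a Witten-deformed tangential square along $Y$ plus a one-dimensional rescaled harmonic oscillator in $x_m$. For the normal factor one uses the exact Mehler formula, keeping the full $T$-dependence; taking its $\phi$-supertrace (with $\phi$ the reflection $x_m\mapsto-x_m$), evaluating at $x_m=0$ and integrating produces the Berezin-integral expression $\int^B\exp(-B_{T^2})$, and --- via the $N_{x_m}$ insertion and the variation of $b^{\widetilde F}_T=e^{-2Tf}b^{\widetilde F}$, which brings in $(b^{\widetilde F})^{-1}\nabla^{\widetilde F}b^{\widetilde F}$ --- the term $\tfrac T2\int_Y\theta_\phi(\widetilde F,b^{\widetilde F})\int^B\widehat{df}\exp(-B_{T^2})$. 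For the tangential factor one performs Getzler rescaling along $Y$: the $N_Y$ insertion produces, in the limit, the operator $L$ of \cite[(3.52)]{BZ1} on $Y$ paired against $\exp(-B_{T^2})$ and carries the overall factor $\tfrac1t$, while the part of $N$ that simply counts form-degree contributes the constant $\tfrac m2\chi_\phi(\widetilde F)$, exactly as in the computation behind Theorem \ref{t3.7}. A final Duhamel estimate, comparing the true heat operator on $U_\varepsilon$ with the tensor-product model and using the $T$-uniform Gaussian bounds, shows all remaining terms are $O(t)$ uniformly for $0\le T\le 1/t$; adding up the tangential and normal contributions gives the three displayed main terms.

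The hard part will be the simultaneous bookkeeping of the two parameters $t$ and $T$: one must run the Getzler rescaling in the $Y$-directions while keeping the $T$-dependence of the normal harmonic oscillator exact (not Taylor expanded), and then check that every Duhamel remainder and every curvature correction really is $O(t)$ all the way down to $T=1/t$ --- that is, that the delicate large-$T$/small-$t$ estimates of \cite[Chapter 4]{BZ1} survive restriction to the fixed locus $Y$. This is precisely where the assumption that $g^{TX}$ and $b^F$ are of product type near $Y$ does its work.
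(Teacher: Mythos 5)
Your plan is to prove the estimate directly, from scratch, by redoing the $\mathbb{Z}_2$-equivariant Bismut--Zhang localization and Getzler rescaling for the operator $tD_b+T\widehat{c}(\nabla f)$ built from the (non-self-adjoint) bilinear Laplacian. The paper takes a structurally different and substantially shorter route: it observes that the $\mathbb{Z}_2$-equivariant version of this estimate for the \emph{Hermitian} operator $tD_g+T\widehat{c}(\nabla f)$ is already available (from \cite[Ch.~13]{BZ1}, \cite{BZ2}, \cite[Prop.~11.5]{B}), so it suffices to show
\[
\left|{\rm Tr}_s\!\left[\phi N e^{-(tD_b+T\widehat{c}(\nabla f))^2}\right]
-{\rm Tr}_s\!\left[\phi N e^{-(tD_g+T\widehat{c}(\nabla f))^2}\right]
-\tfrac{T}{2}\!\int_Y\!\bigl(\theta_\phi(\widetilde F,b^{\widetilde F})-\theta_\phi(\widetilde F,g^{\widetilde F})\bigr)\!\int^B\!\widehat{df}\,e^{-B_{T^2}}\right|\le C''t.
\]
This comparison is then handled by a finite Duhamel expansion of $e^{-A^2_{b,t,T}}$ around $e^{-A^2_{g,t,T}}$ in powers of $C_{t,T}=A^2_{b,t,T}-A^2_{g,t,T}$ (carried over from \cite[(7.8)]{SZ}), localization near $Y$, and Getzler rescaling applied to each finite term; the $\theta$ correction falls out of the $k=1$ Duhamel term through the identity $G_{t^2}(C_{t,T})=G_{t^2}(t^2B_{b,g})+tT\,\omega^F(\nabla f)$. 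Your approach never makes this reduction, which is the load-bearing idea of the proof.

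Beyond being a longer route, your sketch has two concrete gaps. First, you invoke "the usual off-diagonal Gaussian bounds for $\exp(-(tD_b+T\widehat c(\nabla f))^2)$'' uniformly in $0\le T\le 1/t$. For the Hermitian $D_g$ these are classical, but $D_b$ is not self-adjoint and $D_b^2$ is not a nonnegative operator; the small-$t$ heat-kernel estimates and the $\phi$-trace localization for the bilinear Laplacian are precisely what the Duhamel comparison to $D_g$ is designed to sidestep (this is the whole architecture of \cite[Sections 6--9]{SZ}, reused verbatim here). Asserting them as "usual'' is a real gap, not a technicality. Second, you attribute the term $\tfrac{T}{2}\int_Y\theta_\phi(\widetilde F,b^{\widetilde F})\int^B\widehat{df}\,e^{-B_{T^2}}$ to the \emph{normal} factor "via the $N_{x_m}$ insertion and the variation of $b^{\widetilde F}_T=e^{-2Tf}b^{\widetilde F}$''. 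That is incorrect: $\theta_\phi(\widetilde F,b^{\widetilde F})$ is a $1$-form on $Y$ paired with an $(m-2)$-form Berezin integral, and under the product hypothesis $b^F=\pi_\varepsilon^*b^F|_Y$ the normal component of $(b^F)^{-1}\nabla^F b^F$ vanishes near $Y$; in the paper's calculation this term arises from the tangential Getzler rescaling of the $k=1$ Duhamel term (equations (\ref{3.54}), (\ref{3.55})), not from the $x_m$-harmonic-oscillator factor. The Mehler/reflection analysis of the normal factor is what produces $\int^B e^{-B_{T^2}}$ and the $\tfrac12\chi_\phi(\widetilde F)$ defect (cf.\ Remark \ref{3.17}), not the Kamber--Tondeur correction.
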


\begin{theorem}\label{t3.10}
For any $T>0$, the following identity holds
\begin{multline}\label{3.520}
\lim_{t\to 0}\left[\phi N\exp\left(-\left(tD_{b}+{T\over t}\widehat{c}(\nabla f)\right)^{2}\right)\right]\\
={\rm rk}(\widetilde{F})
\cdot \left({1\over{1-e^{-2T}}}\left((1+e^{-2T})\sum_{x\in Y\cap B}(-1)^{{\rm ind}_{Y}(x)}{\rm ind}_{Y}(x)-{\rm dim}Ye^{-2T}
\chi(Y)\right)\right)\\
-{\rm rk}(\widetilde{F}){1\over 2}\left({{\sinh(2T)}\over{\cosh(2T)+1}}-1\right)\chi(Y).
\end{multline}
\end{theorem}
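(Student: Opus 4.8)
The plan is to compute the limit $t\to 0$ of the equivariant supertrace $\mathrm{Tr}_s[\phi N\exp(-(tD_b+\tfrac{T}{t}\widehat{c}(\nabla f))^2)]$ by localization near the fixed-point set of $\phi$, which is exactly the boundary hypersurface $Y\subset\widetilde{X}$. The operator $(tD_b+\tfrac{T}{t}\widehat{c}(\nabla f))^2$ is the symmetric-bilinear analogue of the Witten-deformed Laplacian with the fast-time rescaling used in \cite[Section 4]{BZ2} and \cite{SZ}; as $t\to 0$ the heat kernel concentrates on the critical set $\widetilde B$, but because we insert $\phi$ only the critical points of $\widetilde f$ lying in $Y$ (i.e.\ $x\in B_\partial=B\cap Y$) contribute. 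Near such an $x$, the normal Hessian satisfies $d^2f(x)|_{\mathfrak n}>0$, so the geometry splits into a tangential Morse-theoretic piece along $T_xY$ and a transverse harmonic-oscillator piece along $\mathfrak n$. First I would set up this splitting, using the product structure of $g^{T\widetilde X}$ and $b^{\widetilde F}$ near $Y$ and the normal form of $\widetilde f$ guaranteed by Lemma \ref{t1.1}, so that the deformed operator decouples (up to lower-order terms controlled by Theorem \ref{t3.9}, valid for $0\le t\le\alpha$, $0\le T\le 1/t$) into a model operator on $\mathbb R^{\dim Y}\times\mathbb R$.

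Next I would evaluate the local contribution at each $x\in B_\partial$. The tangential factor is a harmonic oscillator on $T_xY$ whose Witten-Laplacian at parameter $T/t$, combined with the Clifford action $\widehat{c}(\nabla f)$, produces in the limit the standard Thom--Smale local index; the insertion of $N$ picks out the index data, giving the $\sum_{x\in Y\cap B}(-1)^{\mathrm{ind}_Y(x)}\mathrm{ind}_Y(x)$ and $\mathrm{dim}Y\cdot\chi(Y)$ pieces after summing over $x\in B_\partial$ (using Gauss--Bonnet on $Y$, $\sum_{x\in B_\partial}(-1)^{\mathrm{ind}_Y(x)}=\chi(Y)$). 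The transverse one-dimensional factor along $\mathfrak n$ is a genuine half-line problem (the boundary being the fixed set), and this is where the $T$-dependent hyperbolic functions enter: the model is a deformed operator on $\mathbb R_{\ge 0}$ (or on $\mathbb R$ with the $\mathbb Z_2$-action), whose relevant trace quantities are elementary Mehler-type Gaussian integrals producing $\frac{1}{1-e^{-2T}}$, $\frac{1+e^{-2T}}{1-e^{-2T}}$, $e^{-2T}$, and $\frac{\sinh(2T)}{\cosh(2T)+1}$. Assembling the tangential and transverse factors, multiplying by $\mathrm{rk}(\widetilde F)$ (the $\phi$-action is trivial on $\widetilde F_x$ for $x\in Y$, as noted before Theorem \ref{t3.4}), and collecting the purely-combinatorial correction $-\tfrac12\mathrm{rk}(\widetilde F)(\tfrac{\sinh(2T)}{\cosh(2T)+1}-1)\chi(Y)$ coming from the half-space heat kernel's boundary term, yields the stated formula.

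Concretely the steps are: (i) reduce to a neighborhood of $Y$ and use finite propagation speed / off-diagonal heat-kernel estimates to discard contributions away from $B_\partial$; (ii) invoke Theorem \ref{t3.9} to replace the full operator by its rescaled-Bismut-Laplacian model $B_{T^2}$ plus the $\theta_\phi$-term, up to $O(t)$, and then take $t\to 0$ so the $\tfrac1t\int_Y\int^B L\exp(-B_{T^2})$ and $\tfrac{T}{2}\int_Y\theta_\phi\int^B\widehat{df}\exp(-B_{T^2})$ terms converge to their finite limits while the $\tfrac m2\chi_\phi(\widetilde F)$ term is subtracted off; (iii) compute the two resulting integrals $\int_Y\int^B L\exp(-B_{T^2})$ and $\int_Y\theta_\phi\int^B\widehat{df}\exp(-B_{T^2})$ by the Mehler formula, exactly as in \cite[Theorem 5.5 and Theorem 7.10]{BZ2} but now over $Y$ rather than the closed manifold, carrying the explicit $T$-dependence; (iv) simplify using $\chi(Y)=\sum_{x\in B_\partial}(-1)^{\mathrm{ind}_Y(x)}$ and $\widetilde\chi'_\phi(\widetilde F)=\mathrm{rk}(\widetilde F)\sum(-1)^{\mathrm{ind}_Y(x)}\mathrm{ind}_Y(x)$.

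The main obstacle I anticipate is step (iii): getting the precise hyperbolic-function coefficients, and in particular the sign and normalization of the extra $-\tfrac12(\tfrac{\sinh(2T)}{\cosh(2T)+1}-1)\chi(Y)$ term, requires carefully tracking the half-line (Neumann/Dirichlet-type) nature of the transverse model and the interplay between the $\phi$-twist and the Witten deformation; a naive closed-manifold Mehler computation would miss this boundary correction. I would handle it by writing the transverse heat kernel on $\mathbb R$ explicitly via Mehler, applying the $\mathbb Z_2$-reflection to pass to the half-line, and reading off the diagonal trace with the $N$-insertion, then checking consistency by specializing $T\to+\infty$ (where the formula should reduce to the corresponding statement with absolute/relative boundary conditions) and $T\to 0$ (where it should reproduce the undeformed equivariant value $\tfrac m2\chi_\phi(\widetilde F)$ type normalization).
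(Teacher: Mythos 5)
Your proposal takes a genuinely different route from the paper. You propose a direct localization-plus-Mehler computation of the full limit: localize near the fixed-point set $Y$, decouple at each $x\in B_\partial$ into a tangential Bismut--Witten model on $T_xY$ and a transverse half-line harmonic oscillator along $\mathfrak n$, and assemble the hyperbolic-function coefficients. The paper instead argues by comparison: the entire content of its proof of Theorem \ref{t3.10} is the single identity
\begin{align*}
\lim_{t\to 0^{+}}\Bigl(\mathrm{Tr}_s\bigl[\phi N\exp(-A^2_{b,t,T/t})\bigr]-\mathrm{Tr}_s\bigl[\phi N\exp(-A^2_{g,t,T/t})\bigr]\Bigr)=0,
\end{align*}
established by the Duhamel expansion (\ref{3.57}), the trace bounds (\ref{3.59})--(\ref{3.60}), and dominated convergence; the explicit right-hand side of (\ref{3.520}) is then \emph{not} re-derived but imported from the Hermitian-metric case of \cite{BZ2}, with the $N_Y$ (Neumann) correction term worked out separately in Remark \ref{3.17} using $\mathrm{ind}_{N_Y}(x)=0$, $N^-=0$. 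Your direct route would recapitulate that whole BZ2-style localization over $Y$; the paper's comparison route deliberately offloads that computation and only has to control the (purely algebraic) perturbation $C_{t,T/t}=A^2_{b,t,T/t}-A^2_{g,t,T/t}$ coming from replacing $b^{\widetilde F}$ by $g^{\widetilde F}$.

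There is also a concrete gap in your step (ii). Theorem \ref{t3.9} is stated under the hypotheses $0\le t\le\alpha$, $0\le T\le 1/t$, with estimate involving the singular term $\tfrac1t\int_Y\int^B L\exp(-B_{T^2})$. In Theorem \ref{t3.10} the relevant superconnection parameter is $T/t\to+\infty$ at fixed $T$, so substituting $T\mapsto T/t$ into Theorem \ref{t3.9} only keeps you inside its range when $T\le 1$, and even then the $\tfrac1t$-prefactor does not ``converge to a finite limit'' at fixed oscillator parameter: it is killed only because $B_{(T/t)^2}$ itself decays as $t\to 0$, which is precisely the large-parameter localization estimate Theorem \ref{t3.9} does not provide (that is the province of Theorem \ref{t3.11}). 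So Theorem \ref{t3.9} alone cannot be the bridge from $D_b$ to the model operator in the $t\to 0$, $T$-fixed regime; you would need either the paper's Duhamel comparison or a separate uniform-in-$T$ localization estimate in the spirit of \cite[Section~13]{BZ1}. Your steps (iii)--(iv) are essentially a recasting of the paper's Remark \ref{3.17}, and if you do carry out the Mehler computation on $N_Y$ with the $\phi$-reflection you should indeed find the $\tfrac{e^{-T}}{e^{T}+e^{-T}}\chi_\phi(\widetilde F)$ boundary term there, matching the $-\tfrac12\bigl(\tfrac{\sinh 2T}{\cosh 2T+1}-1\bigr)\chi(Y)\,\mathrm{rk}(\widetilde F)$ in (\ref{3.520}).
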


\begin{theorem}\label{t3.11}
There exist $\alpha\in (0,1]$, $c>0$, $C>0$ such that for any $t\in (0,\alpha]$, $T\geq 1$, then
\begin{align}\label{3.532}
\left|{\rm Tr}_{s}\left[\phi N\exp\left(-\left(tD_{b}+{T\over t}\widehat{c}(\nabla f)\right)^{2}\right)\right]
-\widetilde{\chi}'_{\phi}(\widetilde{F})\right|\leq c\exp(-CT).
\end{align}
\end{theorem}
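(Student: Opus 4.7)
The plan is to prove Theorem \ref{t3.11} by localizing the equivariant supertrace to a small neighborhood of the critical set $\widetilde{B}$ and then performing a Witten-type rescaling near each $\phi$-fixed critical point, adapting the Witten deformation arguments of \cite{BZ2} and \cite{SZ} to the $\mathbb{Z}_{2}$-equivariant setting on $\widetilde{X}$. The algebraic expansion
\begin{align*}
\left(tD_{b}+\frac{T}{t}\widehat{c}(\nabla f)\right)^{\!2}=t^{2}D_{b}^{2}+T\bigl[D_{b},\widehat{c}(\nabla f)\bigr]+\frac{T^{2}}{t^{2}}|\nabla f|^{2}
\end{align*}
makes visible the coercive term $\frac{T^{2}}{t^{2}}|\nabla f|^{2}$, which drives the exponential decay away from $\widetilde{B}$.

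Using finite propagation speed for the wave operator associated to $D_{b}$, I would split the trace into a contribution from a small neighborhood $U$ of $\widetilde{B}$ and its complement. Off $U$ the Morse condition gives $|\nabla f|^{2}\geq c_{0}>0$, so standard Gaussian bounds show that the contribution of $\widetilde{X}\setminus U$ to ${\rm Tr}_{s}[\phi N e^{-(\cdots)^{2}}]$ is $O(\exp(-c_{0}T^{2}/t^{2}))$, and for $t\in(0,\alpha]$ with $\alpha\leq 1$ and $T\geq 1$ this is absorbed into $c\exp(-CT)$. Within $U$, the involution $\phi$ freely exchanges $j_{1}(x)$ and $j_{2}(x)$ for $x\in B\setminus B_{\partial}$; since the corresponding small neighborhoods are disjoint, off-diagonal heat-kernel decay kills the ${\rm Tr}_{s}[\phi\,\cdot\,]$ contribution from such pairs up to an $O(\exp(-CT/t^{2}))$ error. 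Only the $\phi$-fixed points $x_{0}\in B_{\partial}$ matter.

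Near $x_{0}\in B_{\partial}$, invoking the remark after Theorem \ref{t3.8} I may assume $b^{\widetilde{F}}$ is flat on a neighborhood of $\widetilde{B}$. Using normal coordinates adapted to $Y$ and the rescaling $y=(t/\sqrt{T})z$, the conjugated operator converges, with a Taylor remainder controlled by positive powers of $t$, to $T$ times a model harmonic oscillator on $T_{x_{0}}\widetilde{X}$ built from $d^{2}f(x_{0})$. Because $\phi$ acts as the identity on $\widetilde{F}_{x_{0}}$ and on $T_{x_{0}}\widetilde{X}$ as ${\rm Id}\oplus(-{\rm Id})$ with respect to $T_{x_{0}}Y\oplus\mathfrak{n}_{x_{0}}$, and because $d^{2}f(x_{0})|_{\mathfrak{n}}>0$ by Lemma \ref{t1.1}, Mehler's formula gives an explicit closed form for the $\phi$-equivariant supertrace of the model semigroup; this explicit form is precisely the right-hand side of Theorem \ref{t3.10}. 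The spectral gap of the oscillator is proportional to $T$ and $t$-independent, so the approximation of the true heat supertrace by the model contributes an $O(e^{-CT})$ error uniformly in $t\in(0,\alpha]$. Finally, using $\frac{1+e^{-2T}}{1-e^{-2T}}=1+O(e^{-2T})$, $\frac{e^{-2T}}{1-e^{-2T}}=O(e^{-2T})$, and $\frac{\sinh(2T)}{\cosh(2T)+1}=\tanh T=1+O(e^{-2T})$, the Mehler expression collapses to $\widetilde{\chi}'_{\phi}(\widetilde{F})+O(e^{-CT})$, giving \eqref{3.532}.

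The main obstacle is obtaining this rescaling estimate \emph{uniformly} in $t\in(0,\alpha]$ while keeping an exponential decay in $T$. The effective parameter $t/\sqrt{T}$ shrinks as $t\to 0$ and $T\to\infty$ simultaneously, so one must show that Taylor remainders in the conjugated operator contribute only $O(t)$ corrections that cannot overwhelm the $e^{-CT}$ gap produced by the oscillator. The crucial structural input is that the Mehler spectral gap is of order $T$ rather than some negative power of $t$, so after the rescaling-Jacobian cancellation the polynomial-in-$t$ remainders are bounded independently of $T\geq 1$ and are dominated by the oscillator gap. Once this uniform control is in place, assembling the near-$B_{\partial}$ Mehler contributions, the vanishing of the $B\setminus B_{\partial}$ pair contributions, and the off-$U$ Gaussian bound yields the stated estimate.
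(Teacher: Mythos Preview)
Your approach is conceptually sound but takes a genuinely different route from the paper. The paper does \emph{not} prove the estimate for $D_{b}$ directly by localization and Mehler. Instead, it reduces to the Hermitian case: it shows
\[
\left|{\rm Tr}_{s}\!\left[\phi N\exp\!\left(-A^{2}_{b,t,T/t}\right)\right]-{\rm Tr}_{s}\!\left[\phi N\exp\!\left(-A^{2}_{g,t,T/t}\right)\right]\right|\leq c\,e^{-CT}
\]
via the Duhamel expansion $e^{-A^{2}_{b}}-e^{-A^{2}_{g}}=\sum_{k\geq 1}(-1)^{k}\int_{\Delta_{k}}e^{-t_{1}A^{2}_{g}}C\,e^{-t_{2}A^{2}_{g}}\cdots$, and then bounds each simplex integral using the operator estimates \cite[(8.6), (9.18), (9.19), (9.23)]{SZ}, which already encode the exponential gain in $T$ through the factor $\psi\,e^{-cA^{2}_{g,t,T/t}}$ (here $\psi$ vanishes near $\widetilde{B}$, and $C_{t,T/t}=\psi C_{t,T/t}$ because $b^{\widetilde{F}}$ is flat near $\widetilde{B}$). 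The statement for $D_{g}$ is then taken from \cite[Section 11, Theorem A.3]{BZ2}, with the normal-direction contribution checked explicitly in the Remark following the proof.

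What each buys: the paper's comparison argument is economical because all the hard uniform-in-$(t,T)$ analysis is outsourced to the self-adjoint operator $D_{g}$, where standard elliptic and spectral estimates apply, and to the ready-made bounds of \cite{SZ}. Your direct localization argument is more self-contained and conceptually transparent, but it must confront the non-self-adjointness of $D_{b}^{2}$ head-on: the phrases ``standard Gaussian bounds'' and ``finite propagation speed'' need justification for $D_{b}$, since the usual spectral-theoretic route is unavailable. Finite propagation speed is indeed a principal-symbol fact and survives, but the pointwise heat-kernel decay off $U$ with the claimed $e^{-CT^{2}/t^{2}}$ rate requires either a parametrix argument tailored to the non-self-adjoint setting or, in effect, a comparison back to $D_{g}$---which is exactly what the paper does globally. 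Your near-$B_{\partial}$ Mehler step is fine once $b^{\widetilde{F}}$ is flat there, and your asymptotic collapse of the Mehler expression to $\widetilde{\chi}'_{\phi}(\widetilde{F})+O(e^{-CT})$ is correct.
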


Now we give a proof of Theorem \ref{t3.8}.

\begin{theorem}
The following identity holds,
\begin{multline}\label{3.40}
{{b^{\rm RS}_{{\rm det}(H^{\bullet}(\widetilde{X},\widetilde{F}),\mathbb{Z}_{2})}}\over{b^{M,\nabla f}_{{\rm det}(H^{\bullet}(\widetilde{X},\widetilde{F}),\mathbb{Z}_{2})}}}(\phi)=\\
\exp\left(-\int_{Y}\theta(\widetilde{F},b^{\widetilde{F}})(\nabla f)^*\psi(TY,\nabla^ {TY})+{\rm rk}(F)\chi(Y)\log (2)\right).
\end{multline}
\end{theorem}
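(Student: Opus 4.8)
The plan is to follow the strategy of Bismut--Zhang \cite{BZ2} as adapted to the symmetric bilinear setting in \cite{SZ}, but now keeping track of the $\mathbb{Z}_{2}$-action and, crucially, of the boundary contributions that arise because the fixed-point set of $\phi$ on $\widetilde{X}$ is exactly $Y$. We work with the Witten deformation $b_{T}^{\widetilde{F}}=e^{-2Tf}b^{\widetilde{F}}$ and the associated operator $D_{b_{T}}$, together with the further ``small-time'' deformation $tD_{b}+\frac{T}{t}\widehat{c}(\nabla f)$ used in \cite{BZ2}. The key identity is the exact formula for the equivariant symmetric bilinear Ray--Singer torsion in terms of an equivariant zeta function; differentiating this zeta function and splitting the trace into a small-eigenvalue part and a large-eigenvalue part is the heart of the argument.

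First I would write, for each fixed $T$, the Ray--Singer torsion as the product of the torsion of the finite-dimensional complex $\Omega^{*}_{[0,1],T}(\widetilde{X},\widetilde{F})$ and the zeta-regularized contribution of $P_{T}^{(1,+\infty)}$, using the anomaly-type independence of $a\geq 0$ established above. The large-eigenvalue part is controlled by Theorem \ref{t3.5}, which gives that $\mathrm{Tr}_{s}[\phi N\exp(-tD^{2}_{b_{T}})P_{T}^{(1,+\infty)}]\to 0$ as $T\to\infty$ with uniform exponential decay in $t$; this lets me replace the full zeta-determinant by the one coming from the finite-dimensional complex up to a term going to $1$. For the finite-dimensional complex, Theorem \ref{t3.4} identifies the limit of $P_{T}^{[0,1],\det H}$ applied to the induced bilinear form with $b^{M,\nabla f}_{\det(H^{\bullet}(\widetilde{X},\widetilde{F}),\mathbb{Z}_{2})}$ up to the explicit power of $T/\pi$ and the exponential factor $\exp(2\,\mathrm{Tr}_{s}^{B_{\partial}}[f]T)$, where the exponents $\chi_{\phi}(\widetilde{F})$, $\widetilde{\chi}'_{\phi}(\widetilde{F})$, $\mathrm{Tr}_{s}^{B_{\partial}}[f]$ encode the boundary critical points. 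Theorems \ref{t3.6}, \ref{t3.7} supply the remaining ingredients: the $T\to\infty$ limits of $\mathrm{Tr}_{s}[\phi N P_{T}^{[0,1]}]$ and $\mathrm{Tr}_{s}[\phi D^{2}_{b_{T}}P_{T}^{[0,1]}]$, and the small-$t$ asymptotics of $\mathrm{Tr}_{s}[\phi N\exp(-tD^{2}_{T})]$, whose constant term is $\frac{m}{2}\chi_{\phi}(\widetilde{F})$ (for $m$ even) and whose leading singular term for $m$ odd integrates to an Euler-type quantity on $Y$.

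The matching of the local terms is then carried out exactly as in \cite[Section 5]{BZ2}: one forms the double limit $\lim_{T\to\infty}\lim_{t\to 0}$ of $\mathrm{Tr}_{s}[\phi N\exp(-(tD_{b}+\frac{T}{t}\widehat{c}(\nabla f))^{2})]$, which by Theorems \ref{t3.10}, \ref{t3.11} equals $\widetilde{\chi}'_{\phi}(\widetilde{F})$, while the interpolation via Theorem \ref{t3.9} together with Theorem \ref{t3.7} produces the transgression integral. The point is that the $T$-derivative of the relevant quantity is a total derivative whose boundary values at $T=0$ and $T=\infty$ give, respectively, the heat-kernel local term on $Y$ and the Milnor-side combinatorial term; the finite part that survives is precisely $-\int_{Y}\theta(\widetilde{F},b^{\widetilde{F}})(\nabla f)^{*}\psi(TY,\nabla^{TY})$. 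The factor $\mathrm{rk}(F)\chi(Y)\log 2$ comes from combining the $(T/\pi)^{\frac{m}{2}\chi_{\phi}(\widetilde{F})}$ normalization in Theorem \ref{t3.4} with the $\log 2$ appearing in the doubling formula \eqref{3.28} for the Milnor torsion: the $\sqrt{2}$ rescaling $\gamma(\mathfrak{a}^{*}\otimes 1_{\mathbb{C}^{+}})=\sqrt{2}\,\mathfrak{a}^{*}$ at boundary critical points contributes $\log(2)\sum_{x\in B\cap Y}(-1)^{\mathrm{ind}(x)}\mathrm{rk}(F)=\mathrm{rk}(F)\chi(Y)\log 2$ after using $\sum_{x\in B_{\partial}}(-1)^{\mathrm{ind}_{Y}(x)}=\chi(Y)$. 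Assembling Theorems \ref{t3.4}--\ref{t3.7} (and \ref{t3.9}--\ref{t3.11}), all powers of $T$ and $\pi$ and all purely combinatorial constants cancel except for this $\log 2$ term, yielding \eqref{3.40}.

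The main obstacle I anticipate is the analytic input behind Theorem \ref{t3.5} and Theorem \ref{t3.11}: proving the uniform-in-$t$ exponential estimates for the equivariant supertrace on the large-eigenvalue subspace as $T\to\infty$. Near the boundary $Y$ — which is the fixed locus of $\phi$ — the product structure \eqref{1.1}, \eqref{1.2} and the condition $d^{2}f(x)|_{\mathfrak{n}}>0$ from Lemma \ref{t1.1} are essential, because they let one model $D^{2}_{b_{T}}$ near a boundary critical point by a harmonic-oscillator Hamiltonian twisted by a half-line operator with a spectral gap; controlling the equivariant heat trace then reduces to a localization estimate à la \cite{BZ1,BZ2} but with the extra normal half-line factor. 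Getting the constants $\chi_{\phi}$, $\widetilde{\chi}'_{\phi}$ and $\mathrm{Tr}_{s}^{B_{\partial}}[f]$ to appear with exactly the right signs and multiplicities — tracking the interplay between $\mathrm{ind}(x)$ and $\mathrm{ind}_{Y}(x)$, which differ by $1$ at points of $B_{\partial}$ since the normal Hessian is positive — is the bookkeeping that must be done with care; everything else is a faithful transcription of the closed-manifold argument of \cite{SZ,BZ2}.
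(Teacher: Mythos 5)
Your overall strategy matches the paper's: Witten deformation $b_T^{\widetilde F}=e^{-2Tf}b^{\widetilde F}$, split into finite-dimensional and large-eigenvalue parts, apply Theorems \ref{t3.4}--\ref{t3.7} and \ref{t3.9}--\ref{t3.11}, and match the small-$t$, large-$T$ asymptotics following the Bismut--Zhang scheme as adapted in \cite{SZ}. That part is a faithful sketch of what the paper actually does.

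However, your explanation of where the factor $\mathrm{rk}(F)\chi(Y)\log 2$ comes from is wrong. You attribute it to the $\log 2$ that appears in the doubling formula \eqref{3.28} for the Milnor torsion, i.e.\ to the $\sqrt 2$ rescaling $\gamma(\mathfrak{a}^*\otimes 1_{\mathbb{C}^+})=\sqrt 2\,\mathfrak{a}^*$ at boundary critical points. But \eqref{3.28} compares the equivariant Milnor torsion on $\widetilde X$ with the Milnor torsions on $X$; it simply does not enter the proof of \eqref{3.40}, whose two sides both live on $\widetilde X$. In the paper's proof the $\log 2$ arises purely analytically: the equivariant heat-kernel asymptotics near the fixed-point set $Y$ produce the integral identities \eqref{3.641}, \eqref{3.651}, which involve $\Gamma'(1)$ and $\frac{\Gamma'}{\Gamma}\bigl(\tfrac12\bigr)$ (the $\tfrac12$ coming from the fact that $\phi$ acts by $-1$ on the one-dimensional normal bundle to $Y$, so the model oscillator computation runs over $N_Y$ rather than $T_{x}\widetilde X$); the residual combinatorial constant in \eqref{3.41} is $-\tfrac14\sum_{x\in B_\partial}\mathrm{rk}(F)(-1)^{\mathrm{ind}_Y(x)}\bigl(2\tfrac{\Gamma'}{\Gamma}(\tfrac12)-2\Gamma'(1)\bigr)$, and it is the classical identity $\tfrac{\Gamma'}{\Gamma}(\tfrac12)-\Gamma'(1)=-2\log 2$ in \eqref{3.42} that produces $\mathrm{rk}(F)\chi(Y)\log 2$. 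The $\log 2$ from \eqref{3.28} only enters later, in the proof of Theorem \ref{t5.1}, where \eqref{3.28} and \eqref{3.40} are combined; so conflating these two occurrences is a genuine mistake, and you would need to redo the small-$t$, large-$T$ asymptotic bookkeeping to actually see the $\log 2$ emerge in \eqref{3.40}.
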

\begin{proof}

First of all, by the anomaly formula (\ref{3.192}), for any $T\geq 0$, one has
\begin{multline}\label{3.481}
{{P_{T}^{[0,1],{\rm det}H}\left(b^{\rm RS}_{{\rm det}H^*\left(\Omega^*_{[0,1],T}(\widetilde{X},\widetilde{F}),\mathbb{Z}_{2}\right)}\right)}\over{b^{M,\nabla f}_{{\rm det}(H^*(\widetilde{X},\widetilde{F}),\mathbb{Z}_{2})}}}(\phi)\\
\cdot \prod_{i=0}^{m}\left({\rm det}\left(D^{2}_{b_{T}}|_{\Omega^*_{[0,1],T}(\widetilde{X},\widetilde{F})^{\bot}\cap \Omega^{i}(\widetilde{X},\widetilde{F})}\right)(\phi)\right)^{(-1)^i i}\\
={{P^{{\rm det}H}_{\infty}\left(b^{\rm RS}_{{\rm det}(H^*(\widetilde{X},\widetilde{F}),\mathbb{Z}_{2})}\right)}\over{b^{M,\nabla f}_{{\rm det}(H^*(\widetilde{X},\widetilde{F}),\mathbb{Z}_{2})}}}(\phi)\exp\left(-2T{\rm rk}(F)\int_{Y}fe\left(TY,\nabla^{TY}\right)\right).
\end{multline}

From now on, we will write $a\simeq b$ for $a$, $b\in\mathbb{C}$ if $e^{a}=e^{b}$. Thus, we can rewrite (\ref{3.481}) as
\begin{multline}\label{3.491}
\log\left({{P^{{\rm det}H}_{\infty}\left(b^{\rm RS}_{{\rm det}(H^*(\widetilde{X},\widetilde{F}),\mathbb{Z}_{2})}\right)}\over{b^{M,\nabla f}_{{\rm det}(H^*(\widetilde{X},\widetilde{F}),\mathbb{Z}_{2})}}}(\phi)\right)\\
\simeq \log\left({{P_{T}^{[0,1],{\rm det}H}\left(b^{\rm RS}_{{\rm det}H^*\left(\Omega^*_{[0,1],T}(\widetilde{X},\widetilde{F}),\mathbb{Z}_{2}\right)}\right)}\over{b^{M,\nabla f}_{{\rm det}(H^*(\widetilde{X},\widetilde{F}),\mathbb{Z}_{2})}}}(\phi)\right)\\
+\sum_{i=0}^{m}(-1)^i i\log \left({\rm det}\left(D^{2}_{b_{T}}|_{\Omega^*_{[0,1],T}(\widetilde{X},\widetilde{F})^{\bot}\cap \Omega^{i}(\widetilde{X},\widetilde{F})}\right)(\phi)\right)\\
+2T{\rm rk}(F)\int_{Y}fe\left(TY,\nabla^{TY}\right).
\end{multline}

Let $T_{0}>0$ be as in Theorem \ref{t3.5}. For any $T\geq T_{0}$ and $s\in\mathbb{C}$ with ${\rm Re}(s)\geq m+1$, set
\begin{align}\label{3.501}
\theta_{\phi,T}(s)={1\over{\Gamma(s)}}\int_{0}^{+\infty}t^{s-1}{\rm Tr}_{s}\left[\phi N\exp\left(-tD^{2}_{b_{T}}\right)P_{T}^{(1,+\infty)}\right]dt.
\end{align}
By (\ref{3.401}), $\theta_{\phi,T}(s)$ is well defined and can be extended to a meromorphic function which is holomorphic at $s=0$. Moreover,
\begin{align}\label{3.511}
\sum_{i=0}^{m}(-1)^{i}i\log \left({\rm det}\left(D^{2}_{b_{T}}|_{\Omega^*_{[0,1],T}(\widetilde{X},\widetilde{F})^{\bot}\cap \Omega^{i}(\widetilde{X},\widetilde{F})}\right)(\phi)\right)\simeq
-\left.{{\partial \theta_{\phi,T}(s)}\over{\partial s}}\right|_{s=0}.
\end{align}

Let $d=\alpha^{2}$ with $\alpha$ being as in Theorem \ref{t3.11}. From (\ref{3.501}) and Theorems \ref{t3.5}-\ref{t3.7}, one finds that
\begin{multline}\label{3.521}
\lim_{T\to +\infty}\left.{{\partial \theta_{\phi,T}(s)}\over{\partial s}}\right|_{s=0}\\
=\lim_{T\to +\infty}\int_{0}^{d}\left({\rm Tr}_{s}\left[\phi N\exp\left(-tD^{2}_{b_{T}}\right)\right]-{{a_{-1}}\over{\sqrt{t}}}-{m\over 2}\chi_{\phi}(\widetilde{F})\right){{dt}\over{t}}\\
-{{2a_{-1}}\over{\sqrt{d}}}-\left(\Gamma'(1)-\log d\right)\left({m\over 2}\chi_{\phi}(\widetilde{F})-\widetilde{\chi}'_{\phi}(\widetilde{F})\right),
\end{multline}
where we denote for simplicity that
$$a_{-1}={\rm rk}(F)\int_{Y}\int^{B}L\exp\left(-{{\dot{R}^{TY}}\over 2}\right).$$

To study the first term in the right hand side of (\ref{3.521}), we observe first that for any $T\geq 0$, one has
\begin{align}\label{3.531}
e^{-T\widetilde{f}}D^{2}_{b_{T}}e^{T\widetilde{f}}=\left(D_{b}+T
\widehat{c}(\nabla\widetilde{f})\right)^{2}.
\end{align}
Thus, one has
\begin{align}\label{3.541}
{\rm Tr}_{s}\left[\phi N\exp\left(-tD^{2}_{b_{T}}\right)^{2}\right]={\rm Tr}_{s}\left[\phi N\exp\left(-t\left(D_{b}+T\widehat{c}(\nabla \widetilde{f})\right)\right)\right].
\end{align}

By (\ref{3.541}), one writes
\begin{multline}\label{3.551}
\int_{0}^{d}\left({\rm Tr}_{s}\left[\phi N\exp\left(-tD^{2}_{b_{T}}\right)\right]-{{a_{-1}}\over{\sqrt{t}}}-{m\over 2}\chi_{\phi}(\widetilde{F})\right){{dt}\over{t}}\\
=2\int_{1}^{\sqrt{dT}}\left({\rm Tr}_{s}\left[\phi N\exp\left(-\left({t\over{\sqrt{T}}}D_{b}+t\sqrt{T}\widehat{c}(\nabla \widetilde{f})\right)^{2}\right)\right]-{\sqrt{T}\over{t}}a_{-1}\right.\\
\left.-{m\over 2}\chi_{\phi}(\widetilde{F})\right){{dt}\over{t}}\\
+2\int_{0}^{{1\over{\sqrt{T}}}}\left({\rm Tr}_{s}\left[\phi N\exp\left(-\left(tD_{b}+tT\widehat{c}(\nabla\widetilde{f})\right)^{2}\right)\right]-{{a_{-1}}
\over{t}}-{m\over 2}\chi_{\phi}(\widetilde{F})\right){{dt}\over{t}}.
\end{multline}

In view of Theorem \ref{t3.9}, we write
\begin{multline}\label{3.561}
\int_{0}^{{1\over{\sqrt{T}}}}\left({\rm Tr}_{s}\left[\phi N\exp\left(-\left(tD_{b}+tT\widehat{c}(\nabla\widetilde{f})\right)^{2}\right)\right]-{{a_{-1}}
\over{t}}-{m\over 2}\chi_{\phi}(\widetilde{F})\right){{dt}\over{t}}\\
=\int_{0}^{1\over{\sqrt{T}}}\left({\rm Tr}_{s}\left[\phi N\exp\left(-\left(tD_{b}+tT\widehat{c}(\nabla\widetilde{f})\right)^{2}\right)\right]\right.\\
\left.-{1\over{t}}{\rm rk}(F)\int_{Y}\int^{B}L\exp\left(-B_{(tT)^{2}}\right)\right.\\
\left.-{{tT}\over 2}\int_{Y}\theta(F,b^{F})\int^{B}\widehat{d\widetilde{f}}\exp\left(-B_{(tT)^{2}}\right)-{m\over 2}\chi_{\phi}(\widetilde{F})\right){{dt}\over t}\\
+\int_{0}^{1\over{\sqrt{T}}}\left({1\over t}{\rm rk}(F)\int_{Y}\int^{B}L\exp\left(-B_{(tT)^{2}}\right)-{{a_{-1}}\over{t}}\right){{dt}\over t}\\
+\int_{0}^{1\over{\sqrt{T}}}{{tT}\over 2}\int_{Y}\theta(F,b^{F})\int^{B}\widehat{d\widetilde{f}}\exp\left(-B_{(tT)^{2}}\right){{dt}\over{t}}.
\end{multline}

By \cite[Definitions 3.6, 3.12 and Theorem 3.18]{BZ1}, one has that, as $T\to +\infty$,
\begin{multline}\label{3.571}
\int_{0}^{1\over{\sqrt{T}}}{{tT}\over 2}\int_{Y}\theta(F,b^{F})\int^{B}\widehat{d\widetilde{f}}\exp\left(-B_{(tT)^{2}}\right){{dt}\over{t}}\to\\
{1\over 2}\int_{Y}\theta(F,b^{F})(\nabla f)^*\psi\left(TY,\nabla^{TY}\right).
\end{multline}

From \cite[(3.54)]{BZ1}, \cite[(3.35)]{SZ} and integration by parts, we have
\begin{multline}\label{3.581}
\int_{0}^{1\over{\sqrt{T}}}\left({1\over t}{\rm rk}(F)\int_{Y}\int^{B}L\exp\left(-B_{(tT)^{2}}\right)-{{a_{-1}}\over{t}}\right){{dt}\over t}\\
=-{\sqrt{T}}{\rm rk}(F)\int_{Y}\int^{B}L\exp(-B_{T})+\sqrt{T}a_{-1}\\
-T{\rm rk}(F)\int_{Y}f\int^{B}\exp(-B_{T})+T{\rm rk}(F)\int_{Y}f\int^{B}\exp(-B_{0}).
\end{multline}

From Theorems \ref{t3.9}, \ref{t3.10}, \cite[Theorem 3.20]{BZ1}, \cite[(7.72) and (7.73)]{BZ1} and the dominate convergence, one finds that as $T\to +\infty$,
\begin{multline}\label{3.591}
\int_{0}^{1\over{\sqrt{T}}}\left({\rm Tr}_{s}\left[\phi N\exp\left(-\left(tD_{b}+tT\widehat{c}(\nabla \widetilde{f})\right)^{2}\right)\right]\right.\\
\left.-{1\over t}{\rm rk}(F)\int_{Y}\int^{B}L\exp\left(-B_{(tT)^{2}}\right)\right.\\
-\left.{{tT}\over 2}\int_{Y}\theta(F,b^{F})\int^{B}\widehat{d\widetilde{f}}\exp\left(-B_{(tT)^{2}}\right)-{m\over 2}\chi_{\phi}(\widetilde{F})\right){{dt}\over t}\\
=\int_{0}^{1}\left({\rm Tr}_{s}\left[\phi N\exp\left(-\left({t\over{\sqrt{T}}}D_{b}+t\sqrt{T}\widehat{c}(\nabla\widetilde{f})
\right)^{2}\right)\right]\right.\\
-{\sqrt{T}\over{t}}{\rm rk}(F)\int_{Y}\int^{B}L\exp\left(-B_{(t\sqrt{T})^{2}}\right)\\
\left.-{{t\sqrt{T}}\over 2}\int_{Y}\theta(F,b^{F})\int^{B}\widehat{d\widetilde{f}}\exp\left(-B_{(t\sqrt{T})^{2}}\right)-{m\over 2}\chi_{\phi}(\widetilde{F})\right){{dt}\over t}\\
\to \int_{0}^{1}\left\{{1\over{1-e^{-2t^{2}}}}\left(\left(1+e^{-2t^{2}}\right)\widetilde{\chi}'_{\phi}(\widetilde{F})-{\rm dim}Ye^{-2t^{2}}\chi_{\phi}(\widetilde{F})\right)\right.\\
-{\rm rk}(\widetilde{F}){1\over 2}\left({{\sinh(2t^{2})}\over{\cosh(2t^{2})+1}}-1\right)\chi(Y)\\
\left.+{1\over{2t^{2}}}{\rm rk}(\widetilde{F})\sum_{x\in B_{Y}}(-1)^{{\rm ind}_{Y}(x)}({\rm dim}Y-2{\rm ind}_{Y}(x))-{m\over 2}\chi_{\phi}(\widetilde{F})\right\}{{dt}\over{t}}\\
={1\over 2}{\rm rk}(\widetilde{F})\left\{\sum_{x\in B_{Y}}(-1)^{{\rm ind}_{Y}(x)}{\rm ind}_{Y}(x)-{1\over 2}\chi(Y){\rm dim}Y\right\}\cdot\int_{0}^{1}\left({{1+e^{-2t}}\over{1-e^{-2t}}}-{1\over t}\right){{dt}\over{t}}\\
-{\rm rk}(\widetilde{F}){1\over 4}\chi(Y)\cdot \int_{0}^{1}{{\sinh(2t)}\over{\cosh(2t)+1}}{{dt}\over{t}}.
\end{multline}

On the other hand, by Theorems \ref{t3.10}, \ref{t3.11} and the dominate convergence, we have that as $T\to +\infty$,
\begin{multline}\label{3.601}
\int_{1}^{\sqrt{Td}}\left({\rm Tr}_{s}\left[\phi N\exp\left(-\left({t\over{\sqrt{T}}}D_{b}+t\sqrt{T}\widehat{c}(\nabla \widetilde{f})\right)^{2}\right)\right]-{{\sqrt{T}}\over t}a_{-1}-{m\over 2}\chi_{\phi}(\widetilde{F})\right){{dt}\over{t}}\\
=\int_{1}^{\sqrt{Td}}\left({\rm Tr}_{s}\left[\phi N\exp\left(-\left({t\over{\sqrt{T}}}D_{b}+t\sqrt{T}\widehat{c}(\nabla \widetilde{f})\right)^{2}\right)\right]-\widetilde{\chi}'_{\phi}(\widetilde{F})\right){{dt}\over{t}}\\
+{1\over 2}\widetilde{\chi}'_{\phi}(\widetilde{F})\log(Td)+a_{-1}\sqrt{T}\left({1\over{\sqrt{Td}}}-1\right)-{m\over 4}\chi_{\phi}(\widetilde{F})\log(Td)\\
=\int_{1}^{+\infty}\left\{{1\over{1-e^{-2t^{2}}}}\left(\left(1+e^{-2t^{2}}\right)\widetilde{\chi}'_{\phi}
(\widetilde{F})-{\rm dim}Ye^{-2t^{2}}\chi_{\phi}(\widetilde{F})\right)\right.\\
\left.-{\rm rk}(\widetilde{F}){1\over 2}\left({{\sinh(2t^{2})}\over{\cosh(2t^{2})+1}}-1\right)\chi(Y)-\widetilde{\chi}'_{\phi}(\widetilde{F})\right\}
{{dt}\over{t}}\\
+{1\over 2}\widetilde{\chi}'_{\phi}(\widetilde{F})\log(Td)+a_{-1}\sqrt{T}\left({1\over{\sqrt{Td}}}-1\right)-{m\over 4}\chi_{\phi}(\widetilde{F})\log(Td)+o(1)\\
={\rm rk}(\widetilde{F})\left\{\sum_{x\in B_{Y}}(-1)^{{\rm ind}_{Y}(x)}{\rm ind}_{Y}(x)-{1\over 2}\chi(Y){\rm dim}Y\right\}\\
\cdot \int_{1}^{+\infty}{{e^{-2t}}\over{1-e^{-2t}}}{{dt}\over{t}}\\
-{\rm rk}(\widetilde{F}){1\over 4}\chi(Y)\int_{1}^{+\infty}\left({{\sinh(2t)}\over{\cosh(2t)+1}}-1\right){{dt}\over{t}}\\
+{1\over 2}\left(\widetilde{\chi}'_{\phi}(\widetilde{F})-{m\over 2}\chi_{\phi}(\widetilde{F})\right)\log(Td)+{{a_{-1}}\over{\sqrt{d}}}-{\sqrt{T}}a_{-1}+o(1).
\end{multline}

Combining (\ref{3.38}), (\ref{3.481}) and (\ref{3.551})-(\ref{3.601}), one deduces, by setting $T\to +\infty$, that
\begin{multline}\label{3.611}
\log\left({{P^{{\rm det}H}_{\infty}\left(b^{\rm RS}_{{\rm det}(H^*(\widetilde{X},\widetilde{F}),\mathbb{Z}_{2})}\right)}\over{b^{M,\nabla \widetilde{f}}_{{\rm det}(H^*(\widetilde{X},\widetilde{F}),\mathbb{Z}_{2})}}}(\phi)\right)\simeq\\
-2{\rm Tr}_{s}^{B_{\partial}}[f]T+\left(\widetilde{\chi}'_{\phi}(\widetilde{F})-{m\over 2}\chi_{\phi}(\widetilde{F})\right)\log T-\left(\widetilde{\chi}'_{\phi}(\widetilde{F})-{m\over 2}\chi_{\phi}(\widetilde{F})\right)\log \pi\\
-\int_{Y}\theta(F,b^{F})(\nabla f)^*\psi \left(TY,\nabla^{TY}\right)\\
+2\sqrt{T}{\rm rk}(\widetilde{F})\int_{Y}\int^{B}L\exp(-B_{T})-2\sqrt{T}a_{-1}+2T{\rm rk}(\widetilde{F})\int_{Y}f\int^{B}\exp(-B_{T})\\
-2T{\rm rk}(\widetilde{F})\int_{Y}f\int^{B}\exp(-B_{0})\\
-{\rm rk}(\widetilde{F})\left\{\sum_{x\in B_{Y}}(-1)^{{\rm ind}_{Y}(x)}{\rm ind}_{Y}(x)-{1\over 2}\chi(Y){\rm dim}Y\right\}\\
\cdot \left(\int_{0}^{1}\left({{1+e^{-2t}}\over{1-e^{-2t}}}-{1\over t}\right){{dt}\over{t}}+\int_{1}^{+\infty}{{2e^{-2t}}\over{1-e^{-2t}}}{{dt}\over{t}}\right)\\
+2{\rm rk}(\widetilde{F}){1\over 4}\chi(Y)\cdot\left(\int_{0}^{1}{{\sinh(2t)}\over{\cosh(2t)+1}}{{dt}\over{t}}+\int_{1}^{+\infty}
\left({{\sinh(2t)}\over{\cosh(2t)+1}}-1\right){{dt}\over{t}}\right)\\
-\left(\widetilde{\chi}'_{\phi}(\widetilde{F})-{m\over 2}\chi_{\phi}(\widetilde{F})\right)\log(Td)-2{{a_{-1}}\over {\sqrt{d}}}+2\sqrt{T}a_{-1}\\
+2T{\rm rk}(\widetilde{F})\int_{Y}f e\left(TY,\nabla^{TY}\right)+{{2a_{-1}}\over{\sqrt{d}}}-(\Gamma'(1)-\log d)\left(\widetilde{\chi}'_{\phi}(\widetilde{F})-{m\over 2}\chi_{\phi}(\widetilde{F})\right)+o(1).
\end{multline}

By \cite[Theorem 3.20]{BZ1} and \cite[(7.72)]{BZ1}, one has
\begin{multline}\label{3.621}
\lim_{T\to +\infty}\left(2T{\rm rk}(\widetilde{F})\int_{Y}f\int^{B}\exp(-B_{T})-2T{\rm Tr}_{s}^{B_{\partial}}[f]\right)\\
=-{\rm rk}(\widetilde{F})\left\{\sum_{x\in B_{Y}}(-1)^{{\rm ind}_{Y}(x)}{\rm ind}_{Y}(x)-{1\over 2}\chi(Y){\rm dim}Y\right\},
\end{multline}
\begin{multline}\label{3.631}
\lim_{T\to +\infty}2\sqrt{T}{\rm rk}(\widetilde{F})\int_{Y}\int^{B}L\exp(-B_{T})\\
=2{\rm rk}(\widetilde{F})\left\{\sum_{x\in B_{Y}}(-1)^{{\rm ind}_{Y}(x)}{\rm ind}_{Y}(x)-{1\over 2}\chi(Y){\rm dim}Y\right\}.
\end{multline}

On the other hand, by \cite[(7.93)]{BZ1} and \cite[(5.55)]{BZ2}, one has
\begin{align}\label{3.641}
\int_{0}^{1}\left({{1+e^{-2t}}\over{1-e^{-2t}}}-{1\over t}\right)+\int_{1}^{+\infty}{{2e^{-2t}}\over{1-e^{-2t}}}{{dt}\over{t}}=1-\log\pi -\Gamma'(1),
\end{align}

\begin{multline}\label{3.651}
\int_{0}^{1}\left({{\sinh(2t)}\over{\cosh(2t)+1}}\right){{dt}\over{t}}+\int_{1}^{+\infty}
\left({{\sinh(2t)}\over{\cosh(2t)+1}}-1\right){{dt}\over{t}}\\
=-\log\pi-{{\Gamma'}\over{\Gamma}}\left({1\over 2}\right).
\end{multline}
From (\ref{3.611})-(\ref{3.651}), we get
\begin{multline}\label{3.41}
{{b^{\rm RS}_{{\rm det}(H^{\bullet}(\widetilde{X},\widetilde{F}),\mathbb{Z}_{2})}}\over{b^{M,\nabla f}_{{\rm det}
(H^{\bullet}(\widetilde{X},\widetilde{F}),\mathbb{Z}_{2})}}}(\phi)=\exp\left(-\int_{Y}\theta(\widetilde{F},
b^{\widetilde{F}})(\nabla f)^*\psi(TY,\nabla^{TY})\right.\\
\left.-{1\over 4}\sum_{x\in B_{\partial}}{\rm rk}(F)(-1)^{{\rm ind}_{Y}(x)}\left(2{\Gamma'\over\Gamma}\left({1\over 2}\right)-2\Gamma'(1)\right)\right).
\end{multline}
By \cite[(5.53)]{BZ2}, we know
\begin{align}\label{3.42}
{{\Gamma'}\over{\Gamma}}\left({1\over 2}\right)-\Gamma'(1)=-2\log(2).
\end{align}
Then from (\ref{3.41}), (\ref{3.42}) and Lemma \ref{t1.1}, we get (\ref{3.40}).
\end{proof}

\subsection{Proofs of the intermediate results}

The purpose of this subsection is to prove the intermediate results. Since the methods of the proofs of these
theorems are essentially the same as the corresponding theorems in \cite{SZ}, so we refer to \cite{SZ} for related
definitions and notations directly when there will be no confusion, such as $B_{b,g}$, $A_{b,t,T}$, $A_{g,t,T}$,
$C_{t,T}$, $\cdots$.
\subsubsection{Proof of Theorem \ref{t3.4}}
First, as it in \cite[(4.45)]{SZ}, we have
\begin{multline}\label{3.43}
{{P_{T}^{[0,1],{\rm det}H}(b^{\rm RS}_{{\rm det}(H^{\bullet}(\Omega^*_{[0,1],T}(\widetilde{X},\widetilde{F})),\mathbb{Z}_{2})})}\over{b^{M,\nabla f}_{{\rm det}(H^{\bullet}(\widetilde{X},\widetilde{F}),\mathbb{Z}_{2})}}}(\phi)\\
=\prod_{i=0}^{m}{\rm det}\left(P_{\infty,T}^{\#}P_{\infty,T}|_{\Omega^{i}_{[0,1],T}(\widetilde{X},\widetilde{F})}\right)^{(-1)^{i+1}}(\phi).
\end{multline}

From \cite[Propositions 4.4 and 4.5]{SZ}, one deduces that as $T\to +\infty$,
\begin{multline}\label{3.44}
{\rm det}\left(P_{\infty,T}^{\#}P_{\infty,T}|_{\Omega^{i}_{[0,1],T}(\widetilde{X},\widetilde{F})}\right)(\phi)\\
={\rm det}\left(e_{T}e_{T}^{\#}P_{\infty,T}^{\#}P_{\infty,T}|_{\Omega^{i}_{[0,1],T}(\widetilde{X},\widetilde{F})}\right)(\phi)
\cdot {\rm det}^{-1}\left(e_{T}e_{T}^{\#}|_{\Omega^{i}_{[0,1],T}(\widetilde{X},\widetilde{F})}\right)(\phi)\\
={\rm det}\left((P_{\infty,T}e_{T})^{\#}P_{\infty,T}e_{T}|_{C^{i}(W^{u},\widetilde{F})}\right)(\phi)\cdot {\rm det}^{-1}\left(
e_{T}^{\#}e_{T}|_{C^{i}(W^{u},\widetilde{F})}\right)(\phi)\\
={\rm det}\left((1+O(e^{-cT}))^{\#}\left({\pi\over T}\right)^{N-m/2}e^{2T\mathcal{F}}(1+O(e^{-cT}))|_{C^{i}(W^{u},\widetilde{F})}\right)(\phi)
\end{multline}

From (\ref{3.43}) and (\ref{3.44}), we get the result immediately.

\subsubsection{Proof of Theorem \ref{t3.5}}
The proof of Theorem \ref{t3.5} is the same as the proof of \cite[Theorem 3.4]{SZ} given in \cite[Section 5]{SZ}.

\subsubsection{Proof of Theorem \ref{t3.6}}
Recall that the operator $e_{T}:C^*(W^{u},\widetilde{F})\to \Omega^*_{[0,1],T}(\widetilde{X},\widetilde{F})$ has been
defined in \cite[(4.38)]{SZ}, and in the current case, we also have that $e_{T}$ commutes with $\mathbb{Z}_{2}$. So by
\cite[Proposition 4.4]{SZ}, we have that for $T\geq 0$ large enough, $e_{T}:C^*(W^{u},\widetilde{F})\to \Omega^*_{[0,1],T}(\widetilde{X},\widetilde{F})$ is an identification of $\mathbb{Z}_{2}$-spaces. So (\ref{3.34}) follows. Also (\ref{3.35}) is from \cite[proposition 4.2]{SZ}.

\subsubsection{Proof of Theorem \ref{t3.7}}
In this section, we provide a proof of Theorem \ref{t3.7}, which computes the asymptotic of ${\rm Tr}_{s}[gN\exp(-tD^{2}_{b_{T}})]$ for fixed $T\geq 0$ as $t\to 0$. The method is essentially the same as it in \cite{SZ}.

By \cite[(6.4)]{SZ}, we have
\begin{multline}\label{3.45}
e^{-tD_{b}^{2}}=e^{-tD_{g}^{2}}
+\sum_{k=1}^{m}(-1)^{k}t^{k}\int_{\Delta_{k}}e^{-t_{1}tD_{g}^{2}}B_{b,g}e^{-t_{2}tD_{g}^{2}}\cdots B_{b,g}e^{-t_{k+1}t
D_{g}^{2}}dt_{1}\cdots dt_{k}\\
+(-1)^{m+1}t^{m+1}\int_{\Delta_{m+1}}e^{-t_{1}tD_{g}^{2}}B_{b,g}e^{-t_{2}tD_{g}^{2}}\cdots B_{b,g}e^{-t_{m+2}tD_{b}^{2}}dt_{1}\cdots dt_{m+1},
\end{multline}
where $\Delta_{k}$, $1\leq k\leq m+1$, is the $k$-simplex defined by $t_{1}+\cdots +t_{k+1}=1$, $t_{1}\geq 0$, $\cdots$, $t_{k+1}\geq 0$. Also, by the same proof of \cite[Proposition 6.1]{SZ}, we have the following result.

\begin{proposition}
As $t\to 0^{+}$, one has
\begin{align}\label{3.46}
t^{m+1}\int_{\Delta_{m+1}}{\rm Tr}_{s}\left[\phi Ne^{-t_{1}tD_{g}^{2}}B_{b,g}e^{-t_{2}tD_{g}^{2}}\cdots B_{b,g}e^{-t_{m+2}tD_{b}^{2}}\right]dt_{1}\cdots dt_{m+1}\to 0.
\end{align}
\end{proposition}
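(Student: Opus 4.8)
The plan is to estimate the Duhamel (Volterra) remainder in (\ref{3.45}) by the same mechanism as in the proof of \cite[Proposition 6.1]{SZ}, which is itself modelled on \cite{BZ1}; the only genuinely new features are the presence of $\phi$ and the fact that $\widetilde{X}$ is a closed double. I would begin by recording the two structural facts needed. First, $B_{b,g}=D_{b}^{2}-D_{g}^{2}$ is a first order differential operator with smooth coefficients on $\widetilde{X}$ which commutes with $\phi$, since $g^{T\widetilde{X}}$, $b^{\widetilde{F}}$ and the auxiliary Hermitian metric $g^{\widetilde{F}}$ are all $\phi$-invariant. Second, $D_{g}^{2}\ge 0$ is self-adjoint, so for $0<s\le 1$ its heat kernel satisfies the Gaussian bounds $|\nabla^{k}e^{-sD_{g}^{2}}(z,w)|\le C\,s^{-(m+k)/2}e^{-d(z,w)^{2}/Cs}$, $k=0,1$, while $D_{b}^{2}$, being a second order elliptic operator with the same positive principal symbol, generates a semigroup whose kernel obeys $|e^{-sD_{b}^{2}}(z,w)|\le C\,s^{-m/2}e^{-d(z,w)^{2}/Cs}$; both are established in \cite{SZ}.

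The key steps are then as follows. Write $K_{t}$ for the integrand $e^{-t_{1}tD_{g}^{2}}B_{b,g}e^{-t_{2}tD_{g}^{2}}\cdots B_{b,g}e^{-t_{m+2}tD_{b}^{2}}$ of the remainder in (\ref{3.45}). I would express its Schwartz kernel $K_{t}(z,w)$ as an $(m+1)$-fold integral over $\widetilde{X}$ of the $m+2$ heat kernels interlaced with the coefficients of the first order operators $B_{b,g}$, transferring each derivative onto the adjacent heat kernel. Feeding in the Gaussian bounds and iterating $m+1$ times the elementary composition estimate $\int_{\widetilde{X}}e^{-d(a,b)^{2}/Cp}e^{-d(b,c)^{2}/Cq}\,db\le C(pq/(p+q))^{m/2}e^{-d(a,c)^{2}/C(p+q)}$, and using $t_{1}+\cdots+t_{m+2}=1$, one obtains $|K_{t}(z,w)|\le C\,t^{-(2m+1)/2}(t_{2}\cdots t_{m+2})^{-1/2}e^{-d(z,w)^{2}/Ct}$, uniformly for $(t_{1},\dots,t_{m+2})\in\Delta_{m+1}$ and $0<t\le 1$. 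Since $\widetilde{X}$ is compact and $N$, $\phi$ are bounded, evaluating on the $\phi$-twisted diagonal gives $|{\rm Tr}_{s}[\phi N K_{t}]|\le C\,t^{-(2m+1)/2}(t_{2}\cdots t_{m+2})^{-1/2}$; as $(t_{2}\cdots t_{m+2})^{-1/2}$ is integrable over $\Delta_{m+1}$ (each exponent is $-1/2>-1$), multiplying by $t^{m+1}$ yields a bound $O(t^{m+1-(2m+1)/2})=O(t^{1/2})\to 0$, which is more than enough for (\ref{3.46}).

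The step I expect to be the main obstacle is the bookkeeping inside the kernel estimate: one must check that, after distributing the $m+1$ derivatives coming from the copies of $B_{b,g}$ and carrying out the $m+1$ convolutions, every prefactor $(t_{i}t)^{-m/2}$ coming from the $m+2$ heat kernels is absorbed, so that the resulting bound is uniform over all of $\Delta_{m+1}$ and only the integrable singularities $t_{i}^{-1/2}$ remain. This is precisely where one uses that there are exactly $m+1$ insertions of $B_{b,g}$, which is what makes the net power of $t$ strictly positive; the computation is carried out in detail in \cite[Proposition 6.1]{SZ}, and I would simply repeat it. The two deviations from the situation treated there — that the last propagator is the non-self-adjoint $e^{-t_{m+2}tD_{b}^{2}}$, and that we compute a $\phi$-equivariant supertrace — are inessential: $e^{-sD_{b}^{2}}$ satisfies the same Gaussian estimate, and inserting $\phi$ only forces one to work on the $\phi$-twisted diagonal, on which the kernel obeys the same bound (and is in fact exponentially small away from $Y$).
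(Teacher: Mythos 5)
Your argument is correct and takes the same route the paper does, which simply cites [SZ, Proposition 6.1] for this estimate: Duhamel expansion, pointwise Gaussian bounds on each heat factor with a $s^{-1/2}$ loss for each first-order insertion $B_{b,g}$, iterated convolution over the simplex, and the power count $t^{m+1}\cdot t^{-(2m+1)/2}=t^{1/2}$. The two deviations you single out — evaluation of the supertrace on the $\phi$-twisted diagonal, and the non-self-adjoint final propagator $e^{-t_{m+2}tD_{b}^{2}}$ — are indeed the only points needing comment, and you dispose of them correctly (in fact the twisted-diagonal Gaussian integral is $O(\sqrt{t})$ since the fixed-point set $Y$ has codimension one, so the remainder is even smaller than your $O(t^{1/2})$ bound, but that refinement is not needed).
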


Now for $1\leq k\leq m$, we want to prove
\begin{align}
\lim_{t\to 0^{+}}t^{k}\int_{\Delta_{k}}{\rm Tr}_{s}\left[\phi Ne^{-t_{1}tD_{g}^{2}}B_{b,g}e^{-t_{2}tD_{g}^{2}}\cdots B_{b,g}e^{-t_{k+1}tD_{g}^{2}}\right]dt_{1}\cdots dt_{k}=0.
\end{align}

First from \cite[Theorem 3.3]{DZ}, in our case, we have, as $t\to 0^{+}$,
\begin{multline}\label{3.801}
t^{k}\int_{\Delta_{k}}{\rm Tr}_{s}\left[\phi Ne^{-t_{1}tD_{g}^{2}}B_{b,g}e^{-t_{2}tD_{g}^{2}}\cdots B_{b,g}e^{-t_{k+1}tD_{g}^{2}}\right]dt_{1}\cdots dt_{k}\\
=\sum_{|\lambda(k)|\leq m-k}{{(-1)^{|\lambda(k)|}}\over{\lambda(k)!\widetilde{\lambda}(k)!}}{\rm Tr}_{s}\left[\phi\mathcal{D}_{t}^{\lambda(k)}\exp(-t D^{2}_{g})\right]+O(\sqrt{t}),
\end{multline}
where
$$\mathcal{D}_{t}^{\lambda(k)}=t^{k+|\lambda(k)|} N B^{[\lambda_{1}]}B^{[\lambda_{2}]}\cdots B^{[\lambda_{k}]},$$
$$B^{[0]}=B_{b,g},\ \ B^{[k]}=\left[D^{2}_{g},B^{[k-1]}\right],$$
and we refer to \cite[(3.22)-(3.26)]{DZ} the other notations.

Let $N_{Y}=TX/TY$ be the normal bundle to $Y$ in $X$. We identity $N_{Y}$ with the orthogonal bundle to $TY$ in $TX$. By standard estimates of heat kernel, the problem in calculating $t\to 0^{+}$ in (\ref{3.801}) can be localized to an open neighborhood $\mathcal{U}_{\varepsilon}$ of $Y$ in $X$. Using normal geodesic coordinates to $Y$ in $X$, we will identity $\mathcal{U}_{\varepsilon}$ to an $\varepsilon$-neighborhood of $Y$ in $N_{Y}$.

Since we have used normal geodesic coordinates to $Y$ in $X$, if $(y,z)\in N_{Y}$,
\begin{align}
\phi^{-1}(y,z)=(y,\phi^{-1}z).
\end{align}

Let $dv_{Y}$, $dv_{N_{Y}}$ be the Riemannian volumes on $TY$, $N_{Y}$ induced by $g^{T\widetilde{X}}$. Let $k(y,z)$ $(y\in Y, z\in N_{Y}, |z|<\varepsilon)$ be defined by
\begin{align}
dv_{X}=k(y,z)dv_{Y}(y)dv_{N_{Y}}(z).
\end{align}
Then
\begin{align}
k(y,0)=1.
\end{align}

Let $\rho(Z)$ be a smooth function which is equal to $1$ if $|Z|\leq {1\over 4}\varepsilon$ and equal to $0$ if $|Z|\geq {1\over 2}\varepsilon$. Take $x_{0}\in Y$, let $\mathbb{F}_{x_{0}}$ be the smooth sections of $(\Lambda(T^*\widetilde{X})\otimes \widetilde{F})_{x_{0}}$ over $T_{x_{0}}\widetilde{X}$. Let $\Delta^{T\widetilde{X}}$ be the standard Laplacian on $T_{x_{0}}\widetilde{X}$ with respect to the metric $g^{T\widetilde{X}}$.

Let $J_{t}^{1}$ be the operator acting on $\mathbb{F}_{x_{0}}$
\begin{align}\label{3.84}
J_{t}^{1}=(1-\rho^{2}(Z))(-t\Delta^{T_{x_{0}}\widetilde{X}})+\rho^{2}(Z)tD_{g}^{2}.
\end{align}

Let $H_{t}$ be the linear map
\begin{align}
s(Z)\in \mathbb{F}_{x_{0}}\to s\left({Z\over{\sqrt{t}}}\right)\in \mathbb{F}_{x_{0}}.
\end{align}
Set
\begin{align}\label{3.86}
J_{t}^{2}=H_{t}^{-1}J_{t}^{1}H_{t}.
\end{align}

Let $e_{1},\cdots, e_{m-1}$ be an oriented orthonormal base of $T_{x_{0}}Y$ and let $e_{m}$ be an orthonormal base of $N_{Y}$.

Let $J_{t}^{3}$ be the operator obtained from $J_{t}^{2}$ by replacing $c(e_{i})$, $\widehat{c}(e_{i})$, $1\leq i\leq m-1$ by
\begin{align}\label{3.79}
c_{t}(e_{i})={{e_{i}}\over{t^{1\over 4}}}\wedge -t^{1\over 4}i_{e_{i}},\ \widehat{c}_{t}(e_{i})={{\widehat{e}_{i}}\over{t^{1\over 4}}}\wedge +{t^{1\over 4}}i_{\widehat{e}_{i}},\ 1\leq i\leq m-1.
\end{align}

Let $G_{t}$ be the process of (\ref{3.86}) and (\ref{3.79}) in the above. Let $P_{t}$ be the smooth kernel of $\exp(-tD^{2}_{g})$ and let $P^{i}_{t}(z,z')$ $(z,z'\in T_{x_{0}}\widetilde{X}, i=1,2,3)$ be the smooth kernel associated to $\exp(-J_{t}^{i})$ with respect to $dv_{T_{x_{0}}\widetilde{X}}(z')$. Then we have

\begin{multline}\label{3.881}
\lim_{t\to 0^{+}}{\rm Tr}_{s}\left[\phi\mathcal{D}_{t}^{\lambda(k)}\exp\left(-tD^{2}_{g}\right)\right]
=\lim_{t\to 0^{+}}\int_{\mathcal{U}_{\varepsilon}/8}{\rm Tr}_{s}\left[\phi \mathcal{D}_{t}^{\lambda(k)}P_{t}(\phi^{-1}x,x)\right]dv_{X}(x)
=\\ \lim_{t\to 0^{+}}\int_{y\in Y}\int_{z\in N_{Y},|z|\leq \varepsilon/8}{\rm Tr}_{s}\left[\phi\mathcal{D}_{t}^{\lambda(k)}P_{t}(\phi^{-1}(y,z),(y,z))\right]k(y,z)dv_{Y}(y)dv_{N_{Y}}(z).
\end{multline}

By (\ref{3.84}) and the finite propagation speed, there exist $c,C>0$ such that for $z\in N_{Y}$, $|z|\leq {1\over 8}\varepsilon$, $0<t\leq 1$, we have
\begin{align}\label{3.891}
\left|P_{t}(\phi^{-1}(y,z),(y,z))k(y,z)-
P_{t}^{1}(\phi^{-1}z,z)\right|\leq c\exp\left(-{C\over{t^{2}}}\right).
\end{align}

Let $[G_{t}\left(\mathcal{D}_{t}^{\lambda(k)}\right)P_{t}^{3}
\left({\phi^{-1}z},z\right)]^{\rm max}\in {\rm End}(\Lambda^*(N_{Y}))\otimes {\rm End}\widetilde{F}$ be the coefficient of $e^{1}\wedge \cdots e^{m-1}\wedge \widehat{e}^{1}\cdots \wedge\cdots \wedge \widehat{e}^{m-1}$ in the expansion of it. Then by \cite[Proposition 4.11]{BZ1}, we have
\begin{multline}
{\rm Tr}_{s}\left[\phi \mathcal{D}_{t}^{\lambda(k)}P_{t}^{1}(\phi^{-1}z,z)\right]\\
=2^{m-1}(-1)^{{(m-1)m}\over{2}}{1\over {\sqrt{t}}}{\rm Tr}_{s}\left[\phi\left[ G_{t}\left(\mathcal{D}_{t}^{\lambda(k)}\right)P_{t}^{3}
\left({{\phi^{-1}z}\over{\sqrt{t}}},{z\over{\sqrt{t}}}\right)\right]^{\rm max}\right].
\end{multline}
Then
\begin{multline}\label{3.901}
\lim_{t\to 0^{+}}\int_{z\in N_{Y},|z|\leq \varepsilon/8}{\rm Tr}_{s}\left[\phi \mathcal{D}_{t}^{\lambda(k)}P_{t}^{1}(\phi^{-1}z,z)\right]dv_{N_{Y}}(z)=\lim_{t\to 0^{+}}\int_{z\in N_{Y},|z|\leq \varepsilon/8}\\
2^{m-1}(-1)^{{(m-1)m}\over{2}}{1\over {\sqrt{t}}}{\rm Tr}_{s}\left[\phi\left[ G_{t}\left(\mathcal{D}_{t}^{\lambda(k)}\right)P_{t}^{3}
\left({{\phi^{-1}z}\over{\sqrt{t}}},{z\over{\sqrt{t}}}\right)\right]^{\rm max}\right]dv_{N_{Y}}(z).
\end{multline}

Let $a>0$ be the injectivity radius of $(\widetilde{X},g^{T\widetilde{X}})$. We identify the open ball $B^{T_{x\widetilde{X}}}(0,{a\over 2})$ with the open ball $B^{\widetilde{X}}(x,{a\over 2})$ in $\widetilde{X}$ using geodesic coordinates. Then $y\in T_{x}\widetilde{X}$, $|y|\leq {a\over 2}$, represents an element of $B^{\widetilde{X}}(0,{a\over 2})$. For $y\in T_{x}\widetilde{X}$, $|y|\leq {a\over 2}$, we identify $T_{y}\widetilde{X}$, $\widetilde{F}_{y}$ to $T_{x}\widetilde{X}$, $\widetilde{F}_{x}$ by parallel transport along the geodesic $t\in [0,1]\to ty$ with respect to the connections $\nabla^{T\widetilde{X}}$, $\nabla^{\widetilde{F},u}$ respectively.

Let $\Gamma^{T\widetilde{X},x}$, $\Gamma^{\widetilde{F},u,x}$ be the connection forms for $\nabla^{T\widetilde{X}}$, $\nabla^{\widetilde{F},u}$ in the considered trivialization of $T\widetilde{X}$. By \cite[Proposition 4.7]{ABP}, one has
$$\Gamma^{T\widetilde{X},x}={1\over 2}R_{x}^{T\widetilde{X}}(y,\cdot)+O(|y|^{2}),$$
\begin{align}
\Gamma^{\widetilde{F},u,x}_{y}=O(|y|).
\end{align}

Then by direct computation, we find that as $t\to 0^{+}$,
\begin{align}\label{3.80}
G_{t}\left(t^{l+1}B^{[l]}\right)=O(\sqrt{t}),\ \ l\geq 0,
\end{align}
and
\begin{align}\label{3.81}
G_{t}(N)={1\over {2\sqrt{t}}}\sum_{i=1}^{m-1}e_{i}\wedge \widehat{e}_{i}+O(1)={1\over {\sqrt{t}}}L|_{Y}+O(1),
\end{align}
then
\begin{align}\label{3.941}
\lim_{t\to 0^{+}}G_{t}\left(\mathcal{D}_{t}^{\lambda(k)}\right) {\rm exists\ and}\ \lim_{t\to 0^{+}}G_{t}\left(\mathcal{D}_{t}^{\lambda(k)}\right)=0,\ 1<k\leq m.
\end{align}

Using \cite[(4.29)]{BZ1}, one can find that as $t\to 0^{+}$,
\begin{align}\label{3.94}
J_{t}^{3}\to J_{0}^{3}=-\Delta^{T_{x_{0}}\widetilde{X}}+{1\over 2}\dot{R}^{TY}.
\end{align}

Then by (\ref{3.901}), (\ref{3.80}), (\ref{3.81}) and (\ref{3.94}), we have
\begin{multline}\label{3.97}
\lim_{t\to 0^{+}}\int_{z\in N_{Y},|z|\leq \varepsilon/8}{\rm Tr}_{s}\left[\phi \mathcal{D}_{t}^{\lambda(k)}P_{t}^{1}(\phi^{-1}z,z)\right]dv_{N_{Y}}(z)\\
=2^{m-1}(-1)^{{(m-1)m}\over 2}\int_{z\in N_{Y}}{\rm Tr}_{s}\left[\phi\left[ G_{0}\left(\mathcal{D}_{0}^{\lambda(k)}\right)P_{0}^{3}(\phi^{-1}z,z)\right]^{\rm max}\right]dv_{N_{Y}}(z).
\end{multline}

Then by (\ref{3.801}), (\ref{3.881}), (\ref{3.891}), (\ref{3.941}), (\ref{3.94}) and (\ref{3.97}), we have that for any $1<k\leq m$,
\begin{align}\label{3.47}
\lim_{t\to 0^{+}}t^{k}\int_{\Delta_{k}}{\rm Tr}_{s}\left[\phi Ne^{-t_{1}tD_{g}^{2}}B_{b,g}e^{-t_{2}tD_{g}^{2}}\cdots B_{b,g}e^{-t_{k+1}tD_{g}^{2}}\right]dt_{1}\cdots dt_{k}=0,
\end{align}
while for $k=1$, $0\leq t_{1}\leq 1$,
using the standard heat kernel on $\mathbb{R}^{n}$, $\phi^{-1}z=-z$, \cite[(6.16)]{SZ} and
$${1\over{2\sqrt{\pi t}}}\int_{\mathbb{R}}\exp\left(-{{4|y|^{2}}\over{4t}}\right)dy={1\over 2},\ \ {\rm Tr}_{s}\left[c(e_{m})\widehat{c}(e_{m})\right]=-2,$$
we have
\begin{multline}\label{3.48}
\lim_{t\to 0^{+}}t{\rm Tr}_{s}\left[\phi Ne^{-t_{1}tD_{g}^{2}}B_{b,g}e^{-(1-t_{1})tD_{g}^{2}}\right]=\lim_{t\to 0^{+}}t{\rm Tr}_{s}\left[\phi NB_{b,g}e^{-tD^{2}_{g}}\right]\\
={1\over 2}\int_{Y}\int^{B}{\rm Tr}\left[\phi \left(\sum_{i,j=1}^{m-1}e_{i}\wedge \widehat{e}_{j}(\nabla^{u}_{e_{i}}\omega^{F}(e_{j}))+{1\over 2}\left[\omega^{F},\widehat{\omega^{F}_{g}}-\widehat{\omega^{F}}\right]\right)\right]\\
\cdot L\exp\left(-{{\dot{R}^{TY}}\over{2}}\right).
\end{multline}
So by \cite[(2.13)]{BZ2}, and proceeding as in \cite[(6.26)-(6.28)]{SZ}, we have
\begin{align}\label{3.49}
\lim_{t\to 0^{+}}t{\rm Tr}_{s}\left[\phi Ne^{-t_{1}tD_{g}^{2}}B_{b,g}e^{-(1-t_{1})tD_{g}^{2}}\right]=0.
\end{align}

From (\ref{3.45}), (\ref{3.46}), (\ref{3.47}) and (\ref{3.49}) and \cite[Theorem 5.9]{BZ2}, one can get the result.

\subsubsection{Proof of Theorem \ref{t3.9}}

In order to prove (\ref{t3.9}), one needs only to prove that under the, conditions of Theorem \ref{t3.9}, there exists a constant $C">0$ such that
\begin{multline}\label{3.50}
\left|{\rm Tr}_{s}\left[\phi N\exp\left(-(tD_{b}+T\widehat{c}(\nabla f))^{2}\right)\right]-{\rm Tr}_{s}\left[\phi N\exp\left(-(tD_{g}+T\widehat{c}(\nabla f))^{2}\right)\right]\right.\\
\left.-{T\over 2}\int_{Y}(\theta_{\phi}(\widetilde{F},b^{\widetilde{F}})-\theta_{\phi}(\widetilde{F},g^{\widetilde{F}}))\int^{B}
\widehat{df}\exp(-B_{T^{2}})\right|\leq C"t.
\end{multline}

By \cite[(7.8)]{SZ}, we have
\begin{multline}\label{3.51}
e^{-A^{2}_{b,t,T}}=e^{-A^{2}_{g,t,T}}\\
+\sum_{k=1}^{m}(-1)^{k}\int_{\Delta_{k}}e^{-t_{1}A^{2}_{g,t,T}}C_{t,T}e^{-t_{2}A^{2}_{g,t,T}}\cdots C_{t,T}e^{-t_{k+1}A^{2}_{g,t,T}}dt_{1}\cdots dt_{k}\\
+(-1)^{m+1}\int_{\Delta_{m+1}}e^{-t_{1}A^{2}_{g,t,T}}C_{t,T}e^{-t_{2}A^{2}_{g,t,T}}\cdots C_{t,T}e^{-t_{m+2}A^{2}_{b,t,T}}dt_{1}\cdots dt_{m+1}.
\end{multline}
From the proof of \cite[(7.21)]{SZ}, we have that there exists $C_{1}>0$ such that for any $t>0$ small enough and $T\in[0,{1\over t}]$,
\begin{multline}\label{3.52}
\left|\int_{\Delta_{m+1}}{\rm Tr}_{s}\left[\phi Ne^{-t_{1}A^{2}_{g,t,T}}C_{t,T}e^{-t_{2}A^{2}_{g,t,T}}\cdots C_{t,T}e^{-t_{m+2}A^{2}_{b,t,T}}\right]dt_{1}\cdots dt_{m+1}\right|\\
\leq C_{1}t
\end{multline}

By the standard heat kernel expansion, we see that for $1\leq k\leq m$, our problem can be localized near $Y$.

Now for any $x\in Y$, let $e_{1},\cdots, e_{m-1},e_{m}$ be a orthonormal basis of $T\widetilde{X}|_{Y}$ such that $e_{1},\cdots,e_{m-1}$ is an orthonormal basis of $TY$ and $e_{m}$ is the normal vector field along $Y$. Then we use the Getzler rescaling (cf. \cite{BGV}, \cite{Ge1}, \cite{Ge2}) introduced in (\ref{3.79}), with $t$ there replaced by $t^{2}$ here. By using \cite[(7.7)]{SZ}, one has
\begin{align}\label{3.88}
G_{t^{2}}(C_{t,T})=G_{t^{2}}(t^{2}B_{b,g})+tT\omega^{F}(\nabla f).
\end{align}
By (\ref{3.81}), we have
\begin{align}\label{3.89}
G_{t^{2}}(N)={1\over{2t}}\sum_{i=1}^{m-1}e^{i}\wedge \widehat{e}^{i}+O(1)={1\over t}L|_{Y}+O(1).
\end{align}

By (\ref{3.88}), (\ref{3.89}), replacing (\ref{3.84}) by \cite[(13.8)]{BZ1}, that is,
\begin{align}
J_{t}^{1}=\left(1-\rho^{2}(Z)\right)\left(-t^{2}\Delta^{T_{x_{0}}\widetilde{X}}+T^{2}\right)+\rho^{2}(Z)\left(tD_{g}+
T\widehat{c}(\nabla f)\right)^{2},
\end{align}
using $\mathbb{Z}_{2}$-equivariant version of \cite[Proposition 13.3]{BZ1} (cf. \cite[Proposition 11.5]{B}) and applying the steps (\ref{3.801})-(\ref{3.49}), we have that there exists $C_{2}>0$, $0<d<1$ such that for any $1<k\leq m$,
$0<t\leq d$, $T\geq 0$ with $tT\leq 1$,
\begin{multline}\label{3.53}
\left|\int_{\Delta_{k}}{\rm Tr}_{s}\left[\phi Ne^{-t_{1}A^{2}_{g,t,T}}C_{t,T}e^{-t_{2}A^{2}_{g,t,T}}\cdots C_{t,T}e^{-t_{k+1}A^{2}_{g,t,T}}\right]dt_{1}\cdots dt_{k}\right|\\
\leq C_{2}t.
\end{multline}

For $k=1$, we have
\begin{align}\label{3.90}
{\rm Tr}_{s}\left[\phi Ne^{-t_{1}A^{2}_{g,t,T}}C_{t,T}e^{-(1-t_{1})A^{2}_{g,t,T}}\right]={\rm Tr}_{s}\left[\phi NC_{t,T}e^{-A^{2}_{g,t,T}}\right].
\end{align}
From (\ref{3.88}), (\ref{3.89}), (\ref{3.90}) and proceeding as above, one has for any $0<t\leq d$, $T\geq 0$ with $tT\leq 1$ and $0\leq t_{1}\leq 1$,

\begin{multline}\label{3.54}
\left|{\rm Tr}_{s}\left[\phi Ne^{-t_{1}A^{2}_{g,t,T}}C_{t,T}e^{-(1-t_{1})A^{2}_{g,t,T}}\right]-T\int_{Y}\int^{B}{\rm Tr}\left[\phi \omega^{F}(\nabla f)\right]L\exp(-B_{T^{2}})\right|\\
\leq C_{2}t.
\end{multline}

Now similar as \cite[(7.25)]{SZ}, we have
\begin{multline}\label{3.55}
\int_{Y}\int^{B}{\rm Tr}\left[\phi\omega^{F}(\nabla f)\right]L\exp(-B_{T^{2}})\\
={1\over 2}\int_{Y}\left(\theta_{\phi}(\widetilde{F},g^{\widetilde{F}})-\theta_{\phi}(\widetilde{F},b^{\widetilde{F}})\right)
\int^{B}\widehat{\nabla f}\exp(-B_{T^{2}}).
\end{multline}

From (\ref{3.51})-(\ref{3.55}), we get (\ref{3.50}), which completes the proof.

\subsubsection{Proof of Theorem \ref{t3.10}}

In order to prove Theorem \ref{t3.10}, we need only to prove that for any $T>0$,
\begin{align}\label{3.56}
\lim_{t\to 0^{+}}\left({\rm Tr}_{s}\left[\phi N\exp\left(-A^{2}_{b,t,{T\over t}}\right)\right]-{\rm Tr}_{s}\left[\phi N\exp\left(-A^{2}_{g,t,{T\over t}}\right)\right]\right)=0.
\end{align}

By \cite[(8.2) and (8.4)]{SZ}, there exists $0<C_{0}\leq 1$, such that when $0<t\leq C_{0}$, one has the absolute convergent expansion formula
\begin{multline}\label{3.57}
e^{-A^{2}_{b,t,{T\over t}}}-e^{-A^{2}_{g,t,{T\over t}}}\\
=\sum_{k=1}^{+\infty}(-1)^{k}\int_{\Delta_{k}}e^{-t_{1}A^{2}_{g,t,{T\over t}}}C_{t,{T\over t}}e^{-t_{2}A^{2}_{g,t,{T\over t}}}\cdots C_{t,{T\over t}}e^{-t_{k+1}A^{2}_{g,t,{T\over t}}}dt_{1}\cdots dt_{k},
\end{multline}
and that
\begin{align}\label{3.58}
\sum_{k=m}^{+\infty}(-1)^{k}\int_{\Delta_{k}}\phi e^{-t_{1}A^{2}_{g,t,{T\over t}}}C_{t,{T\over t}}e^{-t_{2}A^{2}_{g,t,{T\over t}}}\cdots C_{t,{T\over t}}e^{-t_{k+1}A^{2}_{g,t,{T\over t}}}dt_{1}\cdots dt_{k}
\end{align}
is uniformly absolute convergent for $0<t\leq C_{0}$.

Proceeding as in \cite[Section 8]{SZ}, for any $(t_{1},\cdots,t_{k+1})\in \Delta_{k}\backslash \{t_{1}\cdots t_{k+1}=0\}$, one has that
\begin{multline}\label{3.59}
\left|{\rm Tr}_{s}\left[\phi Ne^{-t_{1}A^{2}_{g,t,{T\over t}}}C_{t,{T\over t}}e^{-t_{2}A^{2}_{g,t,{T\over t}}}\cdots C_{t,{T\over t}}e^{-t_{k+1}A^{2}_{g,t,{T\over t}}}\right]\right|\\
\leq C_{3}t^{k}(t_{1}\cdots t_{k})^{-{1\over 2}}{\rm Tr}\left[e^{-{{A^{2}_{g,t,{T\over t}}}\over{2}}}\right]
\left\|\psi e^{-{{t_{k+1}}\over{2}}A^{2}_{g,t,{T\over t}}}\right\|
\end{multline}
for some positive constant $C_{3}>0$.

Also by \cite[(8.4)]{SZ}, (\ref{3.59}) and the same assumption in \cite{SZ} that $t_{k+1}\geq {1\over{}k+1}$, one gets
\begin{multline}\label{3.60}
\left|\int_{\Delta_{k}}{\rm Tr}_{s}\left[\phi Ne^{-t_{1}A^{2}_{g,t,{T\over t}}}C_{t,{T\over t}}e^{-t_{2}A^{2}_{g,t,{T\over t}}}\cdots C_{t,{T\over t}}e^{-t_{k+1}A^{2}_{g,t,{T\over t}}}\right]dt_{1}\cdots dt_{k}\right|\\
\leq C_{4}t^{k-m}\left\|\psi e^{-{1\over{2(k+1)}}A^{2}_{g,t,{T\over t}}}\right\|
\end{multline}
for some constant $C_{4}>0$.

From (\ref{3.57}), (\ref{3.58}), (\ref{3.60}), \cite[(8.9) and (8.10)]{SZ} and the dominate convergence, we get
(\ref{3.56}).

\begin{remark}\label{3.17}
The right hand side of (\ref{3.520}) is not stated in \cite{BZ2}, so we explain it in our case. First by direct computation, it equals
\begin{align}\label{3.116}
{1\over{1-e^{-2T}}}\left(\left(1+e^{-2T}\right)\widetilde{\chi}'_{\phi}(\widetilde{F})-e^{-2T}(m-1)
\chi_{\phi}(\widetilde{F})\right)+{{e^{-T}}\over{e^{T}+e^{-T}}}\chi_{\phi}(\widetilde{F}).
\end{align}
Since near $x\in B_{Y}$, $\phi=1$ in $T_{x}Y$ and $\phi=-1$ in $N_{Y}$. So the first term of (\ref{3.116}) is just from \cite[Theorem A.2]{BZ2} corresponding to $\phi=1$. On $N_{Y}$, by \cite[(8.15)]{BZ2}, as the proof of \cite[Theorem 5.12]{BZ2}, we need to compute
\begin{multline}\label{3.117}
\int_{y\in N_{Y},|y|\leq \varepsilon}\left({{Te^{2T}}\over{2\pi t^{2}\sinh (2T)}}\right)^{1\over 2}\exp\left\{
-{{T(\cosh(2T)+1)}\over{t^{2}\sinh(2T)}}y^{2}\right\}dy\\
\times {\rm Tr}^{\Lambda(N_{Y})}_{s}\left[-N\exp[-2T(N^{+}+{\rm ind}_{N_{Y}}(x)-N^{-})]\right]\chi_{\phi}(\widetilde{F}).
\end{multline}
Since ${\rm ind}_{N_{Y}}(x)=0$, $N^{-}=0$, then as $t\to 0^{+}$, (\ref{3.117}) equals
$${{e^{T}}\over{e^{T}+e^{-T}}}\times e^{-2T}\chi_{\phi}(\widetilde{F}),$$
which is equal to the second term in (\ref{3.116}).
\end{remark}

\subsubsection{Proof of Theorem \ref{t3.11}}

In order to prove Theorem \ref{t3.11}, we need only to prove that there exist $c>0$, $C>0$, $0<C_{0}\leq 1$ such that
for any $0<t\leq C_{0}$, $T\geq 1$,
\begin{align}\label{3.61}
\left|{\rm Tr}_{s}\left[\phi N\exp\left(-A^{2}_{b,t,{T\over t}}\right)\right]-{\rm Tr}_{s}\left[\phi N\exp\left(-A^{2}_{g,t,{T\over t}}\right)\right]\right|\leq c\exp(-CT).
\end{align}

First of all, one can choose $C_{0}>0$ small enough so that for any $0<t\leq C_{0}$, $T>0$, by (\ref{3.57}), we have the absolute convergent expansion formula
\begin{multline}
e^{-A^{2}_{b,t,{T\over t}}}-e^{-A^{2}_{g,t,{T\over t}}}\\
=\sum_{k=1}^{+\infty}(-1)^{k}\int_{\Delta_{k}}e^{-t_{1}A^{2}_{g,t,{T\over t}}}C_{t,{T\over t}}e^{-t_{2}A^{2}_{g,t,{T\over t}}}\cdots C_{t,{T\over t}}e^{-t_{k+1}A^{2}_{g,t,{T\over t}}}dt_{1}\cdots dt_{k}
\end{multline}
from which one has
\begin{multline}\label{3.63}
{\rm Tr}_{s}\left[\phi N\exp\left(-A^{2}_{b,t,{T\over t}}\right)\right]-{\rm Tr}_{s}\left[\phi N\exp\left(-A^{2}_{g,t,{T\over t}}\right)\right]
=\\
\sum_{k=1}^{+\infty}(-1)^{k}\int_{\Delta_{k}}{\rm Tr}_{s}\left[\phi Ne^{-t_{1}A^{2}_{g,t,{T\over t}}}C_{t,{T\over t}}e^{-t_{2}A^{2}_{g,t,{T\over t}}}\cdots C_{t,{T\over t}}e^{-t_{k+1}A^{2}_{g,t,{T\over t}}}\right]dt_{1}\cdots dt_{k}.
\end{multline}
Thus, in order to prove (\ref{3.61}), we need only to prove
\begin{multline}
\sum_{k=1}^{+\infty}\left|\int_{\Delta_{k}}{\rm Tr}_{s}\left[\phi Ne^{-t_{1}A^{2}_{g,t,{T\over t}}}C_{t,{T\over t}}e^{-t_{2}A^{2}_{g,t,{T\over t}}}\cdots C_{t,{T\over t}}e^{-t_{k+1}A^{2}_{g,t,{T\over t}}}\right]dt_{1}\cdots dt_{k}\right|\\
=\sum_{k=1}^{+\infty}\left|\int_{\Delta_{k}}{\rm Tr}_{s}\left[\phi Ne^{-(t_{1}+t_{k+1})A^{2}_{g,t,{T\over t}}}C_{t,{T\over t}}e^{-t_{2}A^{2}_{g,t,{T\over t}}}\cdots C_{t,{T\over t}}\right]dt_{1}\cdots dt_{k}\right|\\
\leq c\exp(-CT).
\end{multline}

By \cite[(8.6)]{SZ}, we have for any $t>0$, $T\geq 1$, $(t_{1},\cdots, t_{k+1})\in\Delta_{k}\backslash\{t_{1}\cdots t_{k+1}=0\}$,
\begin{multline}\label{3.65}
{\rm Tr}_{s}\left[\phi Ne^{-(t_{1}+t_{k+1})A^{2}_{g,t,{T\over t}}}C_{t,{T\over t}}e^{-t_{2}A^{2}_{g,t,{T\over t}}}\cdots C_{t,{T\over t}}\right]\\
={\rm Tr}_{s}\left[\phi N\psi e^{-(t_{1}+t_{k+1})A^{2}_{g,t,{T\over t}}}C_{t,{T\over t}}\psi e^{-t_{2}A^{2}_{g,t,{T\over t}}}\cdots \psi e^{-t_{k}A^{2}_{g,t,{T\over t}}}C_{t,{T\over t}}\right].
\end{multline}

From (\ref{3.65}), \cite[(9.18) and (9.19)]{SZ}, one sees that there exists $C_{5}>0$, $C_{6}>0$ and $C_{7}>0$ such that for any $k\geq 1$,
\begin{multline}
\left|\int_{\Delta_{k}}{\rm Tr}_{s}\left[\phi Ne^{-t_{1}A^{2}_{g,t,{T\over t}}}C_{t,{T\over t}}e^{-t_{2}A^{2}_{g,t,{T\over t}}}\cdots C_{t,{T\over t}}e^{-t_{k+1}A^{2}_{g,t,{T\over t}}}\right]dt_{1}\cdots dt_{k}\right|\\
\leq C_{5}(C_{6}t)^{k}{{T^{m\over 2}}\over{t^{n}}}\exp\left(-{{C_{7}T}\over 4}\right),
\end{multline}
from which one sees that there exists $0< c_{1}\leq 1$, $C_{8}>0$, $C_{9}>0$ such that for any $0<t\leq c_{1}$ and $T\geq 1$, one has
\begin{multline}\label{3.67}
\left|\sum_{k=m}^{+\infty}\int_{\Delta_{k}}{\rm Tr}_{s}\left[\phi Ne^{-t_{1}A^{2}_{g,t,{T\over t}}}C_{t,{T\over t}}e^{-t_{2}A^{2}_{g,t,{T\over t}}}\cdots C_{t,{T\over t}}e^{-t_{k+1}A^{2}_{g,t,{T\over t}}}\right]dt_{1}\cdots dt_{k}\right|\\
\leq C_{8}\exp(-C_{9}T).
\end{multline}

On the other hand, for any $1\leq k<m$, by proceeding as in (\ref{3.60}), one has that for any $0<t\leq c_{1}$, $T\geq 1$,
\begin{multline}\label{3.68}
\left|\int_{\Delta_{k}}{\rm Tr}_{s}\left[\phi Ne^{-t_{1}A^{2}_{g,t,{T\over t}}}C_{t,{T\over t}}e^{-t_{2}A^{2}_{g,t,{T\over t}}}\cdots C_{t,{T\over t}}e^{-t_{k+1}A^{2}_{g,t,{T\over t}}}\right]dt_{1}\cdots dt_{k}\right|\\
\leq C_{10}t^{k-m}\left\|\psi e^{-{1\over{2(k+1)}}A^{2}_{g,t,{T\over t}}}\right\|
\end{multline}
for some constant $C_{10}>0$.

From (\ref{3.68}) and \cite[(9.23)]{SZ}, one sees immediately that there exists $C_{11}>0$, $C_{12}>0$ such that for any $1\leq k\leq m-1$, $0<t\leq c_{1}$ and $T\geq 1$, one has
\begin{multline}\label{3.69}
\left|\int_{\Delta_{k}}{\rm Tr}_{s}\left[\phi Ne^{-t_{1}A^{2}_{g,t,{T\over t}}}C_{t,{T\over t}}e^{-t_{2}A^{2}_{g,t,{T\over t}}}\cdots C_{t,{T\over t}}e^{-t_{k+1}A^{2}_{g,t,{T\over t}}}\right]dt_{1}\cdots dt_{k}\right|\\
\leq C_{11}e^{-C_{12}T}.
\end{multline}

From (\ref{3.63}), (\ref{3.67}) and (\ref{3.69}), one gets (\ref{3.61}).

\begin{remark}
From \cite[(11.17)]{BZ2}, we see that there exist $c>0$, $C>0$ such that if $t\in (0,1]$, $T\geq 1$,
\begin{multline}\label{3.127}
\left|\int_{y\in N_{Y},|y|\leq \varepsilon}\left({{Te^{2T}}\over{2\pi t^{2}\sinh(2T)}}\right)^{1\over 2}\exp\left(-{{T(\cosh(2T)+1)}\over{t^{2}\sinh(2T)}}y^{2}\right)dy\right.\\
\left.\times {\rm Tr}^{\Lambda(N_{Y})}_{s}\left[-N\exp[-2T(N^{+}+{\rm ind}_{N_{Y}}(x)-N^{-})]\right]\chi_{\phi}(\widetilde{F})\right|\\
=\left|\left({{e^{2T}}\over{2\pi(\cosh(2T)+1)}}\right)^{1\over 2}\int_{|y|\leq \left({{\cosh(2T)+1}\over{\sinh(2T)}}\right)^{1\over 2}{{\varepsilon\sqrt{T}}\over{t}}}e^{-y^{2}}dy\times e^{-2T}\chi_{\phi}(\widetilde{F})\right|\\
\leq {{e^{-T}\chi_{\phi}(\widetilde{F})}\over{e^{T}+e^{-T}}}\left(1+c\exp\left(-{{CT}\over{t^{2}}}\right)
\right).
\end{multline}
Then from the discussion in Remark \ref{3.17}, \cite[Section 11]{BZ2}, \cite[Theorem A.3]{BZ2} and (\ref{3.127}), we see that (\ref{3.532}) holds for $D_{g}$.
\end{remark}

\section{A Cheeger-M\"{u}ller theorem for symmetric bilinear torsion on manifolds with boundary}

In this section, we will get the main theorem of this paper.

\begin{theorem}\label{t5.1}
If the metric $g^{TX}$ and the non-degenerate symmetric bilinear form $b^{F}$ satisfy the conditions (\ref{1.1}) and (\ref{1.2}). Then for the relative boundary condition, we have
\begin{multline}\label{a5.1}
\left({{b^{\rm RS}_{{\rm det}H^{\bullet}(X,Y,F)}}\over{b^{M,\nabla f}_{{\rm det}H^{\bullet}(X,Y,F)}}}\right)^{2}= 2^{-{\rm rk}(F)\chi(Y)}
\times \exp\left(-2\int_{X}\theta(F,b^{F})(\nabla f)^* \psi(TX,\nabla^{TX})\right.\\
\left.+\int_{Y}\theta(F,b^{F})(\nabla f)^* \psi(TY,\nabla^{TY})\right),
\end{multline}
and for the absolute boundary condition, we have
\begin{multline}\label{a5.2}
\left({{b^{\rm RS}_{{\rm det}H^{\bullet}(X,F)}}\over{b^{M,\nabla f}_{{\rm det}H^{\bullet}(X,F)}}}\right)^{2}= 2^{-{\rm rk}(F)\chi(Y)}
\times \exp\left(-2\int_{X}\theta(F,b^{F})(\nabla f)^* \psi(TX,\nabla^{TX})\right.\\
\left.-\int_{Y}\theta(F,b^{F})(\nabla f)^* \psi(TY,\nabla^{TY})\right).
\end{multline}
\end{theorem}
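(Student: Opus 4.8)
Write $R_{a}:=\log\bigl(b^{\rm RS}_{{\rm det}H^{\bullet}(X,F)}/b^{M,\nabla f}_{{\rm det}H^{\bullet}(X,F)}\bigr)$ and $R_{r}:=\log\bigl(b^{\rm RS}_{{\rm det}H^{\bullet}(X,Y,F)}/b^{M,\nabla f}_{{\rm det}H^{\bullet}(X,Y,F)}\bigr)$; these are well-defined complex numbers, since the quotient of two non-degenerate symmetric bilinear forms on a complex line is independent of the chosen generator, and the theorem is the assertion that $2R_{a}$ and $2R_{r}$ are the logarithms of the right-hand sides of (\ref{a5.2}) and (\ref{a5.1}). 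The plan is to read off $R_{a}+R_{r}$ and $R_{a}-R_{r}$ from the equivariant comparison on the double $\widetilde X=X\cup_{Y}X$ by evaluating it at the two elements of $\mathbb{Z}_{2}$, and then to add and subtract. Subtracting the Milnor doubling formula (\ref{3.28}) from the Ray--Singer doubling formula (\ref{5.82}) — and using that $R_{a}$ (resp.\ $R_{r}$) is insensitive to which of the cohomology isomorphisms $\widetilde\phi_{1},\gamma_{1}^{-1}$ (resp.\ $\widetilde\phi_{2},\gamma_{2}^{-1}$) one uses — gives, for every $g\in\mathbb{Z}_{2}$ with $\chi$ the non-trivial character,
\begin{align*}
\log\left(\frac{b^{\rm RS}_{{\rm det}(H^{\bullet}(\widetilde X,\widetilde F),\mathbb{Z}_{2})}}{b^{M,\nabla f}_{{\rm det}(H^{\bullet}(\widetilde X,\widetilde F),\mathbb{Z}_{2})}}\right)(g)=R_{a}+\chi(g)R_{r}-\log(2)\sum_{x\in B\cap Y}(-1)^{{\rm ind}(x)}{\rm rk}(F).
\end{align*}

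Next I evaluate at $g=\phi$, where $\chi(\phi)=-1$ and the left-hand side is supplied by (\ref{3.40}); since $\widetilde F|_{Y}=F|_{Y}$ and hence $\theta(\widetilde F,b^{\widetilde F})|_{Y}=\theta(F,b^{F})$, this produces an identity for $R_{a}-R_{r}$ in terms of $\int_{Y}\theta(F,b^{F})(\nabla f)^{*}\psi(TY,\nabla^{TY})$ and a power of $2$.

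Then I evaluate at $g={\rm Id}$, where $\chi({\rm Id})=1$. At the identity the equivariant Ray--Singer and Milnor torsions degenerate to the ordinary non-equivariant torsions of the closed manifold $\widetilde X$: the operator $D_{b}^{2}$, the Thom--Smale differential, the de Rham map, the zeta-determinants and the $L^{2}$- and combinatorial bilinear forms all respect the splitting into $\pm1$-eigenspaces of $\phi$, so the definitions (\ref{5.71}), (\ref{3.18}) at $g={\rm Id}$ reconstitute the full torsions on $\widetilde X$. Hence the left-hand side is computed by the main theorem of \cite{SZ} applied to $(\widetilde X,\widetilde F,g^{T\widetilde X},b^{\widetilde F},\widetilde f)$, and equals $-\int_{\widetilde X}\theta(\widetilde F,b^{\widetilde F})(\nabla\widetilde f)^{*}\psi(T\widetilde X,\nabla^{T\widetilde X})$. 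Because $\phi$ reverses both the orientation of $\widetilde X$ and the fibre orientation used to build the Mathai--Quillen current, the integrand $\theta(\widetilde F,b^{\widetilde F})(\nabla\widetilde f)^{*}\psi(T\widetilde X,\nabla^{T\widetilde X})$ is $\phi$-anti-invariant, so this integral equals $2\int_{X}\theta(F,b^{F})(\nabla f)^{*}\psi(TX,\nabla^{TX})$; this yields the companion identity for $R_{a}+R_{r}$.

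Adding the two identities isolates $2R_{a}$ and subtracting them isolates $2R_{r}$. The Morse function furnished by Lemma~\ref{t1.1} has $d^{2}f(x)|_{\mathfrak{n}}>0$ at each $x\in B_{\partial}=B\cap Y$, so ${\rm ind}(x)={\rm ind}_{Y}(x)$ there and $\sum_{x\in B\cap Y}(-1)^{{\rm ind}(x)}=\sum_{x\in B_{\partial}}(-1)^{{\rm ind}_{Y}(x)}=\chi(Y)$ by Morse theory on $Y$; after exponentiating, the accumulated powers of $2$ produce the factor $2^{-{\rm rk}(F)\chi(Y)}$ and the remaining exponentials are exactly those in (\ref{a5.1}) and (\ref{a5.2}). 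The product hypotheses (\ref{1.1}), (\ref{1.2}) are precisely what make $\widetilde X$ carry a smooth $g^{T\widetilde X}$ and $b^{\widetilde F}$, and Theorem~\ref{3.4}, Proposition~\ref{3.5} together with the anomaly formulas (\ref{1.3}), (\ref{1.4}) show that both sides of (\ref{a5.1}), (\ref{a5.2}) vary in the same way under a change of $(g^{TX},b^{F})$, so one may normalize that couple conveniently first. I expect the main obstacle to be the exact bookkeeping of the powers of $2$: the factors $\tfrac{\sqrt2}{2}$ in $\widetilde\phi$ of (\ref{5.31}) and $\gamma$ of (\ref{3.33}), the $\sqrt2$-rescaling at boundary critical points in (\ref{3.16}), the term $\log(2)\,{\rm rk}(F)\sum_{x\in B\cap Y}(-1)^{{\rm ind}(x)}$ in (\ref{3.28}) and the term $\chi_{\phi}(\widetilde F)\log2$ in (\ref{3.40}) must all be transported correctly through the addition and subtraction so that they collapse to the single exponent $-{\rm rk}(F)\chi(Y)\log2$ in each case; getting the orientation conventions right in the doubling of the Mathai--Quillen current, so that the coefficient multiplying $\int_{X}$ is exactly $2$, is the other delicate point. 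The squares in (\ref{a5.1}), (\ref{a5.2}) are what this method naturally delivers, since it pins down $b^{\rm RS}/b^{M}$ only up to an overall sign.
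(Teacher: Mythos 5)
Your overall strategy is the paper's: push the identity to the double $\widetilde X$, evaluate the equivariant RS/Milnor quotient at both $g=\phi$ (Theorem~\ref{t3.8}) and $g=\mathrm{Id}$ (the closed case from \cite{SZ} on $\widetilde X$), then add and subtract. The handling of the $\phi$-anti-invariance of $\theta(\nabla\widetilde f)^{*}\psi(T\widetilde X,\nabla^{T\widetilde X})$ and the resulting factor $2\int_{X}$ is also correct.

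However, there is a genuine gap in the intermediate formula, and it is not merely a bookkeeping slip: your claim that $R_{a}$ (resp.\ $R_{r}$) ``is insensitive to which of the cohomology isomorphisms $\widetilde\phi_{1},\gamma_{1}^{-1}$ one uses'' is false, and dropping this correction changes the final answer. In the doubling formulas (\ref{3.28}) and (\ref{5.82}), the Milnor torsion on $\widetilde X$ is pushed down to $X$ via $\gamma^{-1}$ (the Thom--Smale restriction), while the Ray--Singer torsion is pushed down via $\widetilde\phi$ (the de~Rham restriction). The generators $\widetilde\phi_{1}\mu_{1}$ and $\gamma_{1}^{-1}\mu_{1}$ of $\det H^{\bullet}(X,F)$ are \emph{not} equal once the Thom--Smale and de~Rham models are identified by $P_{\infty}$; the mismatch is exactly the automorphism $\tau_{\pm}=\gamma\circ P_{\infty}^{H}\circ\widetilde\phi\circ P_{\infty}^{H,-1}$ of (\ref{5.61}), and since $b^{\rm RS}$ and $b^{M,\nabla f}$ are quadratic in the generator, the determinant of $\tau_{\pm}$ enters \emph{squared}. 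The paper computes $\prod_{j}(\det\tau_{+}|_{H^{j}})^{(-1)^{j}}=2^{\chi(Y)\,\mathrm{rk}(F)}$ and $\prod_{j}(\det\tau_{-}|_{H^{j}})^{(-1)^{j}}=1$, so the extra factor $2^{2\chi(Y)\,\mathrm{rk}(F)}$ flips the sign of the $\log 2$ term coming from (\ref{3.28}): the correct intermediate identity is (\ref{5.8}),
\begin{align*}
\log\left(\frac{b^{\rm RS}_{{\rm det}(H^{\bullet}(\widetilde X,\widetilde F),\mathbb{Z}_{2})}}
               {b^{M,\nabla f}_{{\rm det}(H^{\bullet}(\widetilde X,\widetilde F),\mathbb{Z}_{2})}}\right)(g)
= +\chi(Y)\,{\rm rk}(F)\log 2 \;+\; R_{a}\;+\;\chi(g)\,R_{r},
\end{align*}
whereas your raw subtraction gives $-\chi(Y)\,{\rm rk}(F)\log 2$. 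With the sign you wrote, the evaluation at $g={\rm Id}$ and $g=\phi$ still produces the correct $2^{-{\rm rk}(F)\chi(Y)}$ for the relative case, but in the absolute case (\ref{a5.2}) it yields $2^{+3\,{\rm rk}(F)\chi(Y)}$ instead of $2^{-{\rm rk}(F)\chi(Y)}$, which is wrong. You should include the computation of $\tau_{\pm}$ via (\ref{5.3})--(\ref{5.61}), exactly where the $\sqrt2$-rescaling of (\ref{3.16}) at boundary critical points and the $\tfrac{\sqrt2}{2}$ normalizations in $\widetilde\phi$ and $\gamma$ get reconciled with the de~Rham map; you flagged the powers of $2$ as the ``main obstacle'' but did not actually resolve them, and the insensitivity claim is the specific misstep that keeps the obstacle from being cleared.
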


\begin{proof}
Assume first that $f$ is a Morse function on $X$ induced by a $\mathbb{Z}_{2}$-equivariant Morse function $f$ on $\widetilde{X}=X\cup_{Y}X$ as in the proof of \cite[Lemma 1.5]{BM}.

We denote by $C_{\bullet}(\widetilde{W}^{u}/W^{u}_{Y},\widetilde{F}^*)=\oplus _{x\in \widetilde{B}\setminus Y}[W^{u}(x)]\otimes \widetilde{F}^*_{x}$. Let $P^{H}_{\infty}$ be the isomorphism on the cohomology induced by the de Rham map $P_{\infty}$, then for $\sigma\in H^{\bullet}(\widetilde{X},\widetilde{F})$,
\begin{multline}
(\gamma\circ P^{H}_{\infty}\circ\widetilde{\phi}_{1}\circ P_{\infty}^{H,-1})(\sigma)|_{C_{\bullet}(W^{u}_{Y},F^*)}={\sqrt{2}\over 2}\gamma(\sigma+\phi^*\sigma)|_{C_{\bullet}(W^{u}_{Y},F^*)}\\=2\sigma|_{C_{\bullet}(W^{u}_{Y},F^*)},
\end{multline}
\begin{align}\label{5.3}
(\gamma\circ P^{H}_{\infty}\circ \widetilde{\phi}\circ P_{\infty}^{H,-1})(\sigma)|_{C_{\bullet}(\widetilde{W}^{u}/W^{u}_{Y},\widetilde{F}^*)}=
\sigma|_{C_{\bullet}(\widetilde{W}^{u}/W^{u}_{Y},\widetilde{F}^*)},
\end{align}
where $\gamma$ is defined in (\ref{3.33})-(\ref{3.14}).

Set
\begin{align}
\tau_{\pm}=\gamma\circ P^{H}_{\infty}\circ\widetilde{\phi}\circ P_{\infty}^{H,-1}: H^{\bullet}(\widetilde{X},\widetilde{F})^{\pm}\to H^{\bullet}(\widetilde{X},\widetilde{F})^{\pm}.
\end{align}
By (\ref{5.3}), we get (cf. \cite[(2.22)]{BM})
\begin{align}\label{5.61}
\prod_{j=0}^{m}\left({\rm det}\tau_{+}|_{H^{j}(\widetilde{X},\widetilde{F})^{+}}\right)^{(-1)^{j}}=2^{\chi(Y){\rm rk}(F)},\ \ \prod_{j=0}^{m}\left({\rm det}\tau_{-}|_{H^{j}(\widetilde{X},\widetilde{F})^{-}}\right)^{(-1)^{j}}=1.
\end{align}

By (\ref{3.28}), (\ref{5.3}), (\ref{5.61}) and (\ref{5.82}), for $g\in\mathbb{Z}_{2}$ and $\chi$ the nontrivial character,
\begin{multline}\label{5.8}
\log \left({{b^{\rm RS}_{{\rm det}(H^{\bullet}(\widetilde{X},\widetilde{F}),\mathbb{Z}_{2})}}\over{{b^{{M},\nabla f}_{{\rm det}(H^{\bullet}(\widetilde{X},\widetilde{F}),\mathbb{Z}_{2})}}}}\right)(g)\\
=\chi(Y){\rm rk}(F)\log(2)+\log \left({{b^{\rm RS}_{{\rm det}H^{\bullet}(X,F)}}\over{b^{M,\nabla f}_{{\rm det}H^{\bullet}(X,F)}}}\right)+\chi(g)\log \left({{b^{\rm RS}_{{\rm det}H^{\bullet}(X,Y,F)}}\over{b^{M,\nabla f}_{{\rm det}H^{\bullet}(X,Y,F)}}}\right).
\end{multline}

We denote by $\psi(T\widetilde{X},\nabla^{T\widetilde{X}})$, $\psi(TY,\nabla^{TY})$ the Mathai-Quillen current on $T\widetilde{X}$, $TY$, respectively. Then by \cite[Theorem 3.1]{SZ} and Theorem \ref{t3.8}, we get
$$\log\left({{b^{\rm RS}_{{\rm det}H^{\bullet}(\widetilde{X},\widetilde{F})}}\over{b^{M,\nabla f}_{{\rm det}H^{\bullet}(\widetilde{X},\widetilde{F})}}}\right)=-\int_{\widetilde{X}}\theta\left(\widetilde{F},
b^{\widetilde{F}}\right)(\nabla f)^*\psi(T\widetilde{X},\nabla^{T\widetilde{X}}),$$
\begin{multline}\label{5.9}
\log \left({{b^{\rm RS}_{{\rm det}(H^{\bullet}(\widetilde{X},\widetilde{F}),\mathbb{Z}_{2})}}\over{b^{M,\nabla f}_{{\rm det}(H^{\bullet}(\widetilde{X},\widetilde{F}),\mathbb{Z}_{2})}}}\right)(\phi)=-\int_{Y}\theta\left(
F,b^{F}\right)(\nabla f)^*\psi(TY,\nabla^{TY})\\
+{\rm rk}(F)\chi(Y)\log 2.
\end{multline}
By (\ref{5.8}) and (\ref{5.9}), we get (\ref{a5.1}) and (\ref{a5.2}).

We established until now Theorem \ref{t5.1} for a special Morse function $f$ on $X$ induced by a $\mathbb{Z}_{2}$-equivariant Morse function $f$ on $\widetilde{X}$. By combining this with the argument in \cite[Section 16]{BZ1}, we know that Theorem \ref{t5.1} holds for any $f$ verifying Lemma \ref{t1.1}.

\end{proof}

\begin{remark}
If $\partial X=Y\cup V$, the metric $g^{TX}$ and the symmetric bilinear form $b^{F}$ are product near $\partial X$. We impose the relative boundary condition on $Y$ and absolute boundary condition on $V$, then by the same proof of \cite[Theorem 2.2]{BM}, we have
\begin{multline}\label{6.15}
\left({{b^{\rm RS}_{{\rm det}H^{\bullet}(X,Y,F)}}\over{b^{M,\nabla f}_{{\rm det}H^{\bullet}(X,Y,F)}}}\right)^{2}= 2^{-{\rm rk}(F)\chi(\partial X)}\exp\left(-\int_{V}\theta(F,b^{F})(\nabla f)^* \psi(TY,\nabla^{TY})\right)\\
\cdot \exp\left(-2\int_{X}\theta(F,b^{F})(\nabla f)^* \psi(TX,\nabla^{TX})\right.\\
\left.+\int_{Y}\theta(F,b^{F})(\nabla f)^* \psi(TY,\nabla^{TY})\right).
\end{multline}
\end{remark}

\begin{remark}
By the anomaly formula \cite[Theorem 3]{M} and the argument in \cite[Proof of Theorem 0.1]{BM}, we can easily extend Theorem \ref{t5.1} to the case that $g^{TX}$ is not of product structure near the boundary.
\end{remark}

\section{Compare with the Ray-Singer analytic torsion}
In this section, we assume that $m$ is odd and $\chi(Y)=0$. We will compare the symmetric bilinear analytic torsion
to the Ray-Singer analytic torsion.

First from the anomaly formula of Ray-Singer metric on manifolds with boundary \cite[(3.25)]{BM} for the case
that metrics $(g^{TX}, h^{F})$ are product near boundary, we see that
\begin{multline}\label{6.1}
\log\left({{\|\cdot\|^{\rm RS}_{{\rm det}H^{\bullet}(X,Y,F),1}}
\over{\|\cdot\|^{\rm RS}_{{\rm det}H^{\bullet}(X,Y,F),0}}}\right)^{2}=-{1\over 2}\int_{Y}\log
\left({{\|\cdot\|_{{\rm det}F,1}}\over{\|\cdot\|_{{\rm det}F,0}}}\right)^{2}e\left(TY,\nabla^{TY}_{0}\right)\\
-{1\over 2}\int_{Y}\widetilde{e}\left(TY,\nabla_{0}^{TY},\nabla_{1}^{TY}\right)\theta\left(F,h_{1}^{F}\right).
\end{multline}
By an observation due to Ma and Zhang \cite{MZ}, we know that by combining
\cite[Theorem 6.1]{BZ1} and (\ref{6.1}), we have that
\begin{align}
\|\cdot\|^{{\rm RS},2}_{{\rm det}H^{\bullet}(X,Y,F)}\cdot \|\cdot\|^{{\rm RS}}_{{\rm det}H^{\bullet}(Y,F)}
\end{align}
is independent of the metrics $(g^{TX},h^{F})$. In the same way, from the anomaly formulas \cite[Theorem 4.2]{BH1},
\cite[Theorem 2.1]{SZ} and \cite[Theorem 3]{M}, we get that
\begin{align}
b^{\rm RS}:=b^{{\rm RS},2}_{{\rm det}H^{\bullet}(X,Y,F)}\cdot b^{\rm RS}_{{\rm det}H^{\bullet}(Y,F)}
\end{align}
does not depend on the choice of $g^{TX}$ and the smooth deformations of the non-degenerate symmetric bilinear form $b^{F}$.

Let $h^{F}$ be a Hermitian metric on $F$. Then one can construct the Ray-Singer analytic torsion as inner products
on ${\rm det}H^{\bullet}(X,Y,F)$ and ${\rm det}H^{\bullet}(Y,F)$, we denote them by $h^{\rm RS}_{{\rm det}H(X,Y,F)}$
and $h^{\rm RS}_{{\rm det}H(Y,F)}$, respectively. Then by as it in \ref{6.1}, we have
$$h^{\rm RS}:=\left(h^{\rm RS}_{{\rm det}H(X,Y,F)}\right)^{2}\cdot h^{\rm RS}_{{\rm det}H(Y,F)}$$
is independent of the choice of $(g^{TX},h^{F})$ and by \cite[Theorem 0.2]{BZ1}, \cite[Theorem 2.2]{BM}, we have
\begin{align}\label{1}
{{\left(h^{\rm RS}_{{\rm det}H(X,Y,F)}\right)^{2}\cdot h^{\rm RS}_{{\rm det}H(Y,F)}}
\over{\left(h^{M,\nabla f}_{{\rm det}H(X,Y,F)}\right)^{2}\cdot h^{M,\nabla f}_{{\rm det}H(Y,F)}}}
=\exp\left(-2\int_{X}\theta(F,h^{F})(\nabla f)^*\psi(TX,\nabla^{TX})\right),
\end{align}
where we used the assumption $\chi(Y)=0$.

On the other hand, if there exists a non-degenerate symmetric bilinear form $b^{F}$ on $F$ which is product
near the boundary $Y$, then by \cite[Theorem 3.1]{SZ} and Theorem \ref{t5.1}, we have
\begin{align}\label{2}
{{\left(b^{\rm RS}_{{\rm det}H(X,Y,F)}\right)^{2}\cdot b^{\rm RS}_{{\rm det}H(Y,F)}}
\over{\left(b^{M,\nabla f}_{{\rm det}H(X,Y,F)}\right)^{2}\cdot b^{M,\nabla f}_{{\rm det}H(Y,F)}}}
=\exp\left(-2\int_{X}\theta(F,b^{F})(\nabla f)^*\psi(TX,\nabla^{TX})\right).
\end{align}
Then by (\ref{1}) and (\ref{2}), we have
\begin{multline}\label{6.6}
{{b^{\rm RS}}\over{h^{\rm RS}}}={{\left(b^{\rm RS}_{{\rm det}H(X,Y,F)}\right)^{2}\cdot b^{\rm RS}_{{\rm det}H(Y,F)}}\over{\left(h^{\rm RS}_{{\rm det}H(X,Y,F)}\right)^{2}\cdot h^{\rm RS}_{{\rm det}H(Y,F)}}}=\left({{h^{M,\nabla f}_{{\rm det}H(X,Y,F)}}\over{b^{M,\nabla f}_{{\rm det}H(X,Y,F)}}}\right)^{2}\cdot \left({h^{M,\nabla f}_{{\rm det}H(Y,F)}}\over{b^{M,\nabla f}_{{\rm det}H(Y,F)}}\right)\\
\cdot\exp\left(-2\int_{X}(\theta(F,b^{F})-\theta(F,h^{F}))(\nabla f)^*\psi(TX,\nabla^{TX})\right).
\end{multline}

Since $\chi(Y)=0$, $\theta(F,b^{F})=\theta(F,h^{F})$ in a neighbouhood of $B\cap Y$, then by \cite[(46)]{BH1} and \cite[(10.14)]{SZ},
we get
\begin{align}\label{6.7}
\left|{h^{M,\nabla f}_{{\rm det}H(Y,F)}}\over{b^{M,\nabla f}_{{\rm det}H(Y,F)}}\right|=1.
\end{align}

By \cite[(3.18)]{BM} and $\chi(Y)=0$, similarly we have
\begin{align}\label{6.8}
\left|{{h^{M,\nabla f}_{{\rm det}H(X,Y,F)}}\over{b^{M,\nabla f}_{{\rm det}H(X,Y,F)}}}\right|=1.
\end{align}

Since $m$ is odd, ${\rm Re}\left[\theta(F,b^{F})\right]=\left[\theta(F,g^{F})\right]$ and $\theta(F,b^{F})=\theta(F,h^{F})$ in a neighborhood of $B$, then by an analogue formula of \cite[(3.53)]{BM}, we have
\begin{align}\label{6.9}
\left|\exp\left(-2\int_{X}(\theta(F,b^{F})-\theta(F,h^{F}))(\nabla f)^*\psi(TX,\nabla^{TX})\right)\right|=1.
\end{align}

Then by (\ref{6.6})-(\ref{6.9}), we get
\begin{align}
\left|{{b^{\rm RS}}\over{h^{\rm RS}}}\right|=\left|{{\left(b^{\rm RS}_{{\rm det}H(X,Y,F)}\right)^{2}\cdot b^{\rm RS}_{{\rm det}H(Y,F)}}\over{\left(h^{\rm RS}_{{\rm det}H(X,Y,F)}\right)^{2}\cdot h^{\rm RS}_{{\rm det}H(Y,F)}}}\right|=1.
\end{align}

\begin{remark}
For the absolute boundary condition, we have
$${{\|\cdot\|^{{\rm RS},2}_{{\rm det}H^{\bullet}(X,F)}}\over{ \|\cdot\|^{{\rm RS}}_{{\rm det}H^{\bullet}(Y,F)}}}
\ {\rm and}\ {{b^{{\rm RS},2}_{{\rm det}H^{\bullet}(X,F)}}\over{ b^{\rm RS}_{{\rm det}H^{\bullet}(Y,F)}}}$$
are independent of $(g^{TX},h^{F})$ and $(g^{TX},b^{F})$ respectively. Also,
$$\left|{{\left(b^{\rm RS}_{{\rm det}H(X,F)}\right)^{2}\cdot \left(b^{\rm RS}_{{\rm det}H(Y,F)}\right)^{-1}}
\over{\left(h^{\rm RS}_{{\rm det}H(X,F)}\right)^{2}\cdot \left(h^{\rm RS}_{{\rm det}H(Y,F)}\right)^{-1}}}\right|=1.$$
\end{remark}

\bibliographystyle{amsplain}

\end{document}